\newtheorem{theorem}{Theorem}
\newtheorem{lemma}{Lemma}
\newtheorem{definition}{Definition}
\begin{document}

\title{Nonfreeness of algebras of symmetric Hilbert modular forms of even weight for $\mathbb{Q}(\sqrt{d})$ where $d>5$ }
\author{Ekaterina Stuken }

\maketitle
\begin{abstract}
We study the algebras of symmetric Hilbert modular forms of even weight for $\mathbb{Q}(\sqrt{d})$, considering them as modular forms for the orthogonal group of the lattice with signature (2,2). Comparing the volume of the corresponding symmetric domain with the volume of the Jacobian of the generators of these algebras, we prove that for all d, except for d=2, 3, 5 these algebras can't be free.
\end{abstract}
\section*{Introduction}
Consider the quadratic field $\mathcal{K}=\mathbb{Q}(\sqrt{d})$, it's discriminant $D=d \Leftrightarrow \ d=1 \pmod{4}$ otherwise $D=4d$, $d=p_1\cdot\ldots p_k$, $p_i$ are different primes, $p_1<p_2<\ldots <p_k$. It is well known that the extended Hilbert modular group $\widehat{Hilb}(\mathcal{K})=\widehat{GL_2(\mathcal{O}_\mathcal{K})\ltimes \sigma}$, where $\langle\sigma\rangle=Gal(\mathcal{K}/\mathbb{Q})$ is isomorphic the automorphism group of even lattice $L$ of signature $(2,2)$ and discriminant $D$ \cite{123}. If $d=1\pmod{4}$, $L=U\oplus \begin{pmatrix}
2 & 1  \\
1 & \frac{1-d}{2} \\
\end{pmatrix}$, if $d=3\pmod{4}$, $L= U\oplus \begin{pmatrix}
-2 & 0  \\
0 & 2d \\
\end{pmatrix}$. Besides, the subgroup of $\widehat{Hilb}(\mathcal{K})$, preserving the product of upper half-planes, coincides with the subgroup $O^+(L)$ of index 2 in $O(L)$. $O^+(L)=\lbrace g\in O(L) | \det g \cdot n(g)=1 \rbrace$ where $n(g)$ -- the spinor norm of $g$. 

The lattice $L$ defines the Cartan domain of type IV:
$$\Omega_L=\lbrace[\omega]\in \mathbb{P}(L\otimes \mathbb{C}); (\omega,\omega)_L=0,\ (\omega,\overline{\omega})_L>0\rbrace$$
It has two connected components $D_L$ and $D'_L$. $O^+(L)$ corresponds to the subgroup, preserving $D_L$. 
The affine cone over $D_L$ in $L\otimes \mathbb{C}$ will be denoted by $D_L^\bullet$.
\begin{definition}
A modular form of weight $k$ with respect to $O(L)$ and with character $\chi:O(L)\rightarrow \mathbb{C}^*$ is a holomorphic map $f:D_L^\bullet\rightarrow\mathbb{C}$ such that:\\
1) $f(tz)=t^{-k}f(z)$, $t\in\mathbb{C}^*$\\
2) $f(g(z))=\chi(g)f(z)$, $g\in O(L)$
\end{definition}
We notice that there are no modular forms of odd weight with trivial character. It follows from the second condition from the definition of modular forms: $-id\in O^+(L)$, because $\det (-id)=1,\ n(-id)=1$, since $-id=r_{e_3}r_{e_4}r_{e_1+e_2}r_{e_1-e_2}$ when $d\neq 1 \pmod{4}$, $-id=r_{e_3-2e_4}r_{e_3}r_{e_1+e_2}r_{e_1-e_2}$ when $d=1\pmod{4}$ ($e_i$ -- basis of $L$, $r_{e_i}$ -- the reflection in $e_i$). Besides, $\chi(-id)=1$, so $f(-z)=f(z)$. But from the first condition we get that $f(-z)=-f(z)$, so there is a contradiction. So we will consider only forms of even weight and trivial character.

The proof of the main result will be based on the Bruinier's theorem \cite{bru}. 
\begin{theorem}
Let $L$ be an even lattice of signature $(2,2)$ such that the dimension of maximal isotropic subspace $L\otimes \mathbb{Q}$ is equal to 1. If $F$ is a meromorphic $O^+(L)$-modular form of weight $K$, vanishing on all mirrors of reflections with multiplicity 1, then 
\begin{equation}
Vol(O^+(L))\cdot K =\frac{1}{2} \sum Vol(O^+(e^\perp\cap L))
\end{equation}
where $e$ -- roots of $L$, the sum is taken over all conjugacy classes.
\end{theorem}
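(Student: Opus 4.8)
The plan is to realize the identity as an equality of two integrals of the first Chern form of the automorphic (Hodge) line bundle, computed in two ways. First I would equip the line bundle $\lambda$ of $O^+(L)$-modular forms of weight $1$ with its Petersson metric; a standard computation shows that its first Chern form $c_1(\lambda)$ equals, up to the fixed normalization constant that defines $Vol$, the $O^+(L)$-invariant Kähler form $\omega$ on $D_L$. Since $L$ has signature $(2,2)$, the quotient $X_L = D_L/O^+(L)$ is a quasi-projective surface, $\dim_{\mathbb{C}} X_L = 2$, and with this normalization $\int_{X_L} c_1(\lambda)^2 = Vol(O^+(L))$. Each root $e$ (a primitive vector of negative norm) has $e^\perp\cap L$ of signature $(2,1)$, so its mirror $M_e = (D_L\cap e^\perp)/\mathrm{Stab}$ is a modular curve with $\int_{M_e} c_1(\lambda) = Vol(O^+(e^\perp\cap L))$, again in the same normalization.

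Next, $F$ is a meromorphic section of $\lambda^{\otimes K}$, so by Poincaré--Lelong its divisor represents $K\, c_1(\lambda)$ in cohomology (on a suitable toroidal compactification of $X_L$). The hypothesis that $F$ vanishes with multiplicity $1$ along every reflection mirror identifies $\operatorname{div}(F)$ with the reduced cycle $\sum_{[e]} M_e$, the sum over $O^+(L)$-conjugacy classes of roots. Capping the class identity $[\operatorname{div} F] = K\, c_1(\lambda)$ with $c_1(\lambda)$ and integrating over $X_L$ then yields
\[
\sum_{[e]} \int_{M_e} c_1(\lambda)\big|_{M_e} \;=\; K \int_{X_L} c_1(\lambda)^2 ,
\]
whose two sides are, by the previous paragraph, $\sum_{[e]} Vol(O^+(e^\perp\cap L))$ and $K\cdot Vol(O^+(L))$. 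The remaining factor $\tfrac12$ is a bookkeeping constant I would extract from the local geometry along each mirror: the reflection $r_e$ has spinor norm matching a negative-norm $e$, hence lies in $O^+(L)$ and fixes $M_e$ pointwise, so the natural map from the stabilizer of $M_e$ onto $O^+(e^\perp\cap L)$ has kernel $\{1,r_e\}$ of order $2$ (equivalently, $e$ and $-e$ cut out the same mirror). Tracking this order-$2$ redundancy produces the $\tfrac12$.

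I expect the genuine difficulty to be analytic rather than formal: $X_L$ is noncompact because the maximal rational isotropic subspace of $L\otimes\mathbb{Q}$ is nonzero, so the surface has cusps, and the manipulations above are valid only if the Stokes/integration-by-parts step produces no boundary contributions. Making this rigorous requires controlling the growth of $\log\|F\|_{\mathrm{Pet}}$ and of $c_1(\lambda)$ near the Baily--Borel boundary and verifying that the relevant integrals converge --- precisely the regularized-integral and automorphic Green-function machinery of Borcherds and Bruinier. A secondary point to check carefully is that $\operatorname{div}(F)$ has no component off the mirrors, so that the meromorphic form contributes exactly $\sum_{[e]} M_e$, and that the normalization constant relating $c_1(\lambda)$ to $\omega$ is the same for $L$ and for each $e^\perp\cap L$, so that it cancels and leaves the clean volume identity.
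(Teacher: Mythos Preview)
The paper does not prove this statement at all: Theorem~1 is quoted from Bruinier~\cite{bru} as an input, and the paper's own work consists of applying it (together with Theorem~2) to bound discriminants. So there is no in-paper proof to compare your proposal against.

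That said, your outline is a faithful sketch of the actual argument in~\cite{bru}. Bruinier's proof does proceed by integrating $\log\|F\|_{\mathrm{Pet}}$ against the invariant volume form and using a Poincar\'e--Lelong/Green-current identity; the substantive content is exactly what you flag as the ``genuine difficulty'': controlling the boundary terms at the cusps, which is handled by the curve lemma (hence the title of~\cite{bru}). Your identification of the factor $\tfrac12$ via the order-$2$ kernel $\{1,r_e\}$ in the stabilizer is also correct. Two small points you would need to tighten in an actual write-up: first, the statement as written only says $F$ vanishes on each mirror with multiplicity~$1$, not that $\operatorname{div}(F)$ is supported \emph{exactly} on the mirrors, so either this must be added as a hypothesis (as it effectively is in the paper's use of Theorem~2) or the formula must be written with $\operatorname{div}(F)$ on the right; second, the hypothesis that the maximal isotropic subspace has dimension~$1$ is precisely what guarantees there are only $0$-dimensional cusps (no $1$-dimensional boundary components), which is what makes the boundary analysis tractable in signature~$(2,2)$---you should say explicitly where this hypothesis enters.
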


\begin{theorem}[\cite{aok}] Assume the algebra of $O^+(L)$-modular forms is free with generators of weights $k_1,\ldots,k_t$. Then there exists $F$ -- $O^+(L)$-modular form of weight $2+k_1+\ldots+k_t$ vanishing with multiplicity one on all mirrors of reflections of $L$ and only on them.
\end{theorem}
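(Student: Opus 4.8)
The plan is to produce $F$ explicitly as the \emph{Jacobian} of the generators $f_1,\dots,f_t$ of the free algebra. Since the graded ring $\mathbb{C}[f_1,\dots,f_t]$ must be the full ring of $O^+(L)$-modular forms, whose $\operatorname{Proj}$ is the modular surface $D_L/O^+(L)$ of dimension $\dim D_L=2$, its Krull dimension equals $3$, and hence $t=3=\dim D_L^\bullet$. I view each $f_i$ as a holomorphic function on the affine cone $D_L^\bullet$, homogeneous of degree $-k_i$, lying inside the quadric cone $\widehat{Q}=\{(\omega,\omega)_L=0\}\subset L\otimes\mathbb{C}\cong\mathbb{C}^4$, which is $3$-dimensional. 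On $\widehat{Q}$ there is a canonical nowhere-vanishing top-degree form obtained as a Poincaré residue,
\begin{equation}
\Omega_0=\operatorname{Res}_{\widehat{Q}}\frac{dz_0\wedge dz_1\wedge dz_2\wedge dz_3}{(\omega,\omega)_L},
\end{equation}
where $z_0,\dots,z_3$ are linear coordinates on $L\otimes\mathbb{C}$. Because $df_1\wedge df_2\wedge df_3$ is also a holomorphic $3$-form on $\widehat{Q}$, I define the Jacobian $J$ by $df_1\wedge df_2\wedge df_3=J\,\Omega_0$; it is holomorphic away from the vertex, and $J\not\equiv 0$ since the $f_i$ are algebraically independent.

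The weight of $F:=J$ is forced by homogeneity. Under scaling $z\mapsto tz$ one has $df_i\mapsto t^{-k_i}df_i$, so the numerator scales by $t^{-(k_1+k_2+k_3)}$; in $\Omega_0$ the volume form $dz_0\wedge\cdots\wedge dz_3$ scales by $t^{4}$ while $(\omega,\omega)_L$ scales by $t^{2}$, so $\Omega_0$ scales by $t^{2}$. Hence $J$ is homogeneous of degree $-(k_1+k_2+k_3)-2$, i.e.\ a modular form of weight $2+k_1+k_2+k_3=2+k_1+\cdots+k_t$. The numerical ``$+2$'' is exactly $\dim D_L$, and it is crucial that it comes from the residue/adjunction on the quadric cone rather than from a naive count on the ambient $\mathbb{C}^4$ (which would give ``$+3$'').

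Next I would check that $F$ transforms with the determinant character and deduce its zero divisor. For $g\in O^+(L)$ the $f_i$ are invariant, so $df_1\wedge df_2\wedge df_3$ is invariant, whereas $\Omega_0\mapsto\det(g)\,\Omega_0$ since $g$ acts linearly on $L\otimes\mathbb{C}$ and preserves $(\omega,\omega)_L$; therefore $F\circ g=\det(g)\,F$. A reflection $r_e$ has $\det(r_e)=-1$, so $F\circ r_e=-F$, and evaluating on the fixed mirror $e^\perp\cap D_L$ gives $F=-F$ there, whence $F$ vanishes on every mirror. To see the multiplicity is exactly $1$, I would pass to a local normal form near a generic mirror point in which $r_e$ acts by $w_1\mapsto -w_1$, $w_j\mapsto w_j$; invariant forms are functions of $w_1^2,w_2,\dots$, so every entry $\partial f_i/\partial w_1$ carries a single factor $w_1$, forcing a simple zero of $J$ along $\{w_1=0\}$ (the complementary minor being nonzero because the $f_i$ are local coordinates on the quotient). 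Finally, freeness yields an isomorphism $D_L/O^+(L)\cong\mathbb{P}(k_1,k_2,k_3)$, so off the union of mirrors (and the codimension $\ge 2$ loci fixed by non-reflections, which carry no divisor) the quotient map is \'etale and the $f_i$ form local coordinates, so $J\neq 0$ there. Thus $F$ vanishes with multiplicity one on all mirrors and only on them.

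The main obstacle is this last pair of claims—exact multiplicity one and vanishing \emph{only} on the mirrors—since they are precisely where freeness enters: one must know that the $f_i$ descend to genuine \emph{local coordinates} on the quotient orbifold, which can fail for a non-free ring. The weight bookkeeping (securing ``$+2=\dim D_L$'' through the residue form) is the other delicate point, while the remaining steps reduce to linear algebra of the Jacobian matrix and the $\det$-character computation.
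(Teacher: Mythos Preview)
Your proposal is correct and follows exactly the approach indicated in the paper: the paper does not spell out a proof but simply records that the form $F$ is taken to be the Jacobian of the generators (citing Aoki--Ibukiyama), and your construction via the Poincar\'e residue $\Omega_0$ on the quadric cone, the weight computation yielding $2+k_1+\cdots+k_t$, the $\det$-character forcing vanishing on mirrors, and the use of freeness to obtain local coordinates on the quotient (hence multiplicity one and no extraneous zeros) constitute precisely this Jacobian argument.
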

It is shown in the proof that the modular form $F$ that is equal to the Jacobian of the generators satisfies these conditions.
\begin{theorem}
Assume $d=1 \pmod 4$. Then the algebra of $O^+(L)$-modular forms of even weight can be free only if $$ d=5,\ d=13,\ d=3 \cdot 7$$
\end{theorem}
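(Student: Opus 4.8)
The plan is to turn freeness into a single numerical identity via the two cited theorems, and then to rule out all but finitely many $d$ by a volume estimate. First I would pin down the number of generators: the graded ring of $O^+(L)$-modular forms has Krull dimension $\dim D_L+1=2+1=3$, so a polynomial presentation uses exactly three algebraically independent generators, of even weights $k_1,k_2,k_3$. By Theorem 2 their Jacobian is then an $O^+(L)$-modular form $F$ of weight $K=2+k_1+k_2+k_3$ vanishing with multiplicity one exactly on the mirrors of reflections. The hypotheses of Theorem 1 hold: $L=U\oplus A$ with $\det A=-d$ a non-square, so $A$ is anisotropic over $\mathbb{Q}$ and $L$ has Witt index $1$. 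Hence Bruinier's formula applies to $F$ and gives
\begin{equation}
K=\frac{1}{2\,Vol(O^+(L))}\sum_{[e]}Vol(O^+(e^\perp\cap L)),
\end{equation}
the sum over conjugacy classes of roots. The left side is forced to be a positive even integer with $K\ge 8$, while the right side is a completely explicit function of $d$; the whole problem becomes showing that this function meets those constraints only for $d\in\{5,13,3\cdot7\}$.

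Next I would compute both volumes by the Hirzebruch--Mumford / Siegel mass formula. For the denominator, the identification of $O^+(L)$ with the extended Hilbert modular group of $\mathcal{K}$ gives, up to an elementary factor, $Vol(O^+(L))\sim\zeta_{\mathcal{K}}(-1)=\zeta(-1)L(-1,\chi_D)$, where $\chi_D$ is the quadratic character modulo $D$; this grows like $d^{3/2}$. Each summand is the volume of the orthogonal group of a ternary lattice $e^\perp\cap L$ of signature $(2,1)$ and determinant at most comparable to $d$, so by the mass formula it grows at most like $d$ and is an explicit product of local densities at $2$ and at the primes $p_i\mid d$ together with a class-number factor. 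To assemble the sum I would classify the roots up to $O^+(L)$-conjugacy. The reflectivity condition $(e,e)\mid 2\,\mathrm{div}(e)$ forces $\mathrm{div}(e)\mid d$ and $(e,e)\in\{-\mathrm{div}(e),-2\,\mathrm{div}(e)\}$, so there are only $O(d^{\varepsilon})$ admissible norms; using the discriminant form of $L$ (cyclic of order $d$ for $d\equiv1\pmod4$) and Eichler's theorem on spinor genera, each norm yields $O(2^{\omega(d)})$ classes, each complement $e^\perp\cap L$ having a determinate isometry type read off from the factorization $d=p_1\cdots p_k$.

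Combining the estimates, the numerator of the ratio is $O(2^{\omega(d)}d)=O(d^{1+\varepsilon})$ while the denominator is $\asymp d^{3/2}$, so $K(d)\to 0$ as $d\to\infty$. Since freeness forces $K\ge 8$, this already excludes all $d$ beyond an explicit threshold once I insert a standard lower bound for $L(-1,\chi_D)$. For the finitely many remaining $d\equiv1\pmod4$ I would evaluate the closed formula for $K(d)$ exactly and discard every value that fails to be an even integer $\ge 8$; the surviving cases are precisely $d=5$, $d=13$, and $d=3\cdot7$.

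The main obstacle is the middle step: the exact enumeration of conjugacy classes of reflective vectors and the computation of the local densities of the ternary complements $e^\perp\cap L$. A miscount of classes, or a single wrong $2$-adic factor, shifts the integrality or parity of $K(d)$ and thereby changes which small $d$ survive, so this part must be carried out with full control over the genus and spinor-genus data attached to the factorization of $d$. By contrast, once the closed form of $K(d)$ and the lower bound on $L(-1,\chi_D)$ are in hand, both the asymptotic exclusion of large $d$ and the final finite case check are routine.
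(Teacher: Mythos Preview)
Your outline matches the paper's proof: reduce freeness to Bruinier's volume identity with $K\ge 8$, bound both sides via the Siegel mass formula to exclude large $d$, and then finish the finitely many survivors by exact computation. The paper's concrete implementation uses Kneser's theorem (rather than Eichler on spinor genera) to obtain class $=$ genus for the ternary complements and thereby uniqueness of each root class, and it relies on explicit Legendre--symbol obstructions (its Lemma~11: $\bigl(\tfrac{d_e}{p_{i_j}}\bigr)=1$ is necessary for a $-2e'$--root) to prune the mid-range cases down to $d\in\{5,13,21,33\}$ before the final exact check eliminates $33$.
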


\begin{theorem}
Assume  $d=3 \pmod 4$. Then the algebra of $O^+(L)$-modular forms of even weight can be free only if
$d=3$.
\end{theorem}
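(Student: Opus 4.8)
The plan is to assume the algebra is free and to pin down two numerical invariants of the hypothetical generators — their product and their sum — by two independent volume computations, and then to show these are arithmetically incompatible for every $d\equiv 3\pmod 4$ except $d=3$. First I would fix the number of generators: since $\Omega_L$ has complex dimension $2$, the graded ring has Krull dimension $3$, so if it is free it is a polynomial ring on exactly three homogeneous generators of weights $k_1\le k_2\le k_3$. By the discussion preceding Definition~1 every weight is even, so each $k_i\ge 2$ and in particular $k_1k_2k_3\ge 8$.

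The product relation comes from the volume of the symmetric domain. By the Hirzebruch--Mumford proportionality principle the leading coefficient of $\dim M_k(O^+(L))$ as $k\to\infty$ is proportional to $\mathrm{Vol}(\Omega_L)$, which in signature $(2,2)$ is an explicit multiple of $\zeta_\mathcal{K}(-1)$; for a polynomial ring on three even generators this same leading coefficient equals $1/(k_1k_2k_3)$. Comparing the two, and calibrating the constant against Gundlach's free ring for $d=5$ (generators of weights $2,6,10$, so $k_1k_2k_3=120=4/\zeta_{\mathbb{Q}(\sqrt5)}(-1)$), I expect
\[
k_1k_2k_3=\frac{4}{\zeta_\mathcal{K}(-1)}.
\]
Since $k_1k_2k_3\ge 8$, freeness forces $\zeta_\mathcal{K}(-1)\le \tfrac12$.

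The sum relation comes from the volume of the Jacobian. By Theorem~2 the Jacobian of the generators is an $O^+(L)$-modular form of weight $K=2+k_1+k_2+k_3$ vanishing to order $1$ exactly on the mirrors, so Theorem~1 gives
\[
k_1+k_2+k_3=\frac{1}{2\,\mathrm{Vol}(O^+(L))}\sum_{[e]}\mathrm{Vol}(O^+(e^\perp\cap L))-2,
\]
the sum running over conjugacy classes of reflective vectors $e$ of $L=U\oplus\langle-2\rangle\oplus\langle 2d\rangle$. Genuine generators must satisfy this together with the product relation; in particular they must obey the AM--GM inequality $(k_1+k_2+k_3)^3\ge 27\,k_1k_2k_3$ while being even integers $\ge 2$.

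Finally I would carry out the volume computations for $L$: evaluate $\mathrm{Vol}(O^+(L))\propto\zeta_\mathcal{K}(-1)$ via Siegel's formula $\zeta_\mathcal{K}(-1)=\tfrac1{60}\sum_{b}\sigma_1\!\big(\tfrac{D-b^2}{4}\big)$, classify the reflective vectors (the classes with $e^2=2$ and the $d$-divisible classes for which $r_e\in O^+(L)$), and compute each $\mathrm{Vol}(O^+(e^\perp\cap L))$ for the resulting signature $(2,1)$ lattices through the mass formula and Dirichlet $L$-values. With $k_1k_2k_3$ and $k_1+k_2+k_3$ now explicit functions of $d$, the case $d\equiv3\pmod4$ is in fact decided by the product relation alone: I would show $\zeta_\mathcal{K}(-1)>\tfrac12$ for every such $d$ with $d\ge 7$, using the lower bound $\zeta_\mathcal{K}(-1)\ge\tfrac1{60}\sigma_1(d)\ge\tfrac{d+1}{60}$ (which already settles $d\ge 30$) together with a direct check of $d\in\{7,11,15,19,23,27\}$. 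This makes $k_1k_2k_3<8$, which is impossible, so only $d=3$ survives, with the sum relation serving as an independent consistency check. The main obstacle is exactly this last step of bookkeeping: enumerating the conjugacy classes of reflective vectors in $L$ and evaluating the ternary-lattice volumes with all normalizations mutually consistent, so that — calibrated against the known $d=5$ ring — both the product and the sum come out correctly.
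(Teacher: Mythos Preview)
Your product relation $k_1k_2k_3=4/\zeta_\mathcal{K}(-1)$ is miscalibrated, and the error is not cosmetic. The group $O^+(L)$ in the paper is the full orthogonal group of $L$, i.e.\ the Hurwitz--Maass extension of the symmetric Hilbert modular group, not just $PSL_2(\mathcal{O}_\mathcal{K})\rtimes\langle\sigma\rangle$; its index over $PSL_2(\mathcal{O}_\mathcal{K})$ depends on the number $k$ of prime factors of $d$ and on the residue of $d$ mod $4$. Concretely, Lemma~27 together with the functional equation gives $\mathrm{Vol}(O^+(L))=\zeta_\mathcal{K}(-1)/2^{k+2}$ for $d\equiv 3\pmod 4$, so the correct product relation is
\[
k_1k_2k_3=\frac{1}{\mathrm{Vol}(O^+(L))}=\frac{2^{k+2}}{\zeta_\mathcal{K}(-1)},
\]
whereas your calibration point $d=5$ lies in the regime $d\equiv 1\pmod 4$, $k=1$, where the constant happens to be $4$. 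A sanity check at $d=3$ already exposes the mismatch: $\zeta_{\mathbb{Q}(\sqrt3)}(-1)=1/6$, so your formula predicts $k_1k_2k_3=24$, yet the paper computes $K=14$, i.e.\ $k_1+k_2+k_3=12$, and no triple of even integers $\ge 2$ summing to $12$ has product $24$ (the triple $(2,4,6)$ has product $48$, matching the corrected formula).

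With the corrected constant your claim that ``the product relation alone decides'' collapses. For $d=7$ one has $\zeta_\mathcal{K}(-1)=2/3$ and $k=1$, giving $k_1k_2k_3=12\ge 8$; for $d=15$ one has $\zeta_\mathcal{K}(-1)=2$ and $k=2$, giving $k_1k_2k_3=8$, which is perfectly consistent with generators $(2,2,2)$. These cases are only eliminated by the sum relation, and that is exactly the content of the paper's proof: the explicit classification of reflective roots (Lemmas~13--17), the volume bounds on $\langle e^\perp\rangle$ (Lemmas~25--27), and the case-by-case comparison against $\mathrm{Vol}(O^+(L))\cdot K$. So the step you relegated to ``an independent consistency check'' is in fact carrying the argument, and the bookkeeping you flagged as ``the main obstacle'' is not optional.
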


\begin{theorem}
Assume $d=2 \pmod 4$. Then the algebra of $O^+(L)$-modular forms of even weight can be free only if $d=6$. 
\end{theorem}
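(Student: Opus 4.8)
The plan is to assume the algebra $A$ of even-weight $O^+(L)$-modular forms is free and to extract from Theorems 1 and 2 a purely numerical identity that fails for every $d\equiv 2\pmod 4$ other than $d=6$. Since $D_L$ has complex dimension $2$, a free $A$ must be a polynomial ring $\mathbb{C}[f_1,f_2,f_3]$ on three generators of positive even weights $k_1,k_2,k_3$. By Theorem 2 the Jacobian $F$ of $f_1,f_2,f_3$ is then an $O^+(L)$-modular form of weight $K=2+k_1+k_2+k_3$ vanishing with multiplicity one exactly on the mirrors of reflections; in particular $K$ is an even integer with $K\ge 8$. Applying Theorem 1 to $F$ gives
\begin{equation}\label{eq:volrat}
K=\frac{1}{2\,Vol(O^+(L))}\sum_{[e]}Vol(O^+(e^\perp\cap L)),
\end{equation}
the sum running over the $O^+(L)$-conjugacy classes of roots $e$. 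Everything reduces to computing the right-hand side of \eqref{eq:volrat} as an explicit function of $d$ and showing that it is never an even integer $\ge 8$ (indeed usually not an integer at all) unless $d=6$.

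For $d\equiv 2\pmod 4$ one has $D=4d$ and $d=2p_2\cdots p_k$ with the $p_i$ odd, and $L=U\oplus\left(\begin{smallmatrix}-2 & 0\\ 0 & 2d\end{smallmatrix}\right)$ is even of signature $(2,2)$ and discriminant $4d$. The first step is to classify the reflective vectors: I would list the admissible square-lengths $(e,e)$ for which $r_e\in O^+(L)$ and, for each length, split the vectors into $O^+(L)$-orbits by recording the image of $e$ in the discriminant group $L^\vee/L$. This produces a finite list of classes $[e]$ together with the isometry type of each complement $e^\perp\cap L$, a rank-$3$ lattice of signature $(1,2)$ or $(2,1)$. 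It is precisely the $2$-adic part of $L^\vee/L$ that differs from the $d\equiv 1$ and $d\equiv 3$ cases, which is why this residue class needs a separate treatment.

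Next I would evaluate all the volumes by the Siegel mass formula, writing each $Vol(O^+(M))$ as a product of local densities times an archimedean factor and special values of Dirichlet $L$-functions. In the ratio \eqref{eq:volrat} the archimedean and common transcendental factors cancel, leaving a rational number expressed as a finite product of local factors over the primes dividing $2d$. Assembling these gives a closed arithmetic formula for $K$ in terms of the prime factorization of $d$, along the same lines as the computations behind Theorems 3 and 4.

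The final step, and the main obstacle, is the combined integrality and parity analysis, which hinges on the local data at $p=2$. Because $2\mid d$, both the orbit count and the local density at the prime $2$ have to be computed directly rather than read off from the odd-prime formula, and it is exactly here that $d=6$ behaves differently from the other values. Once $K(d)$ is in closed form, I would show that for every $d\equiv 2\pmod 4$ with $d\neq 6$ it fails the necessary condition that $K$ be an even integer $\ge 8$ of the form $2+k_1+k_2+k_3$: either its denominator does not clear, or it has the wrong parity, or it falls below the bound. Since $K$ grows monotonically with the number and size of the odd prime factors of $d$, this reduces the problem to finitely many small $d$, each checked by hand, with $d=6$ the only value that survives.
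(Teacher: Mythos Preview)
Your high-level strategy---combine Theorems 1 and 2 with Siegel mass computations to constrain $K$---is exactly what the paper does. But two concrete points separate your sketch from a proof, and one of them is a genuine error.

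First, the monotonicity is backwards. You write that ``$K$ grows monotonically with the number and size of the odd prime factors of $d$'', but the opposite is true: $Vol(O^+(L))$ grows like $d^{3/2}$ (Lemma 29 gives $Vol(O(L))=\frac{(2d')^{3/2}}{2^{k+3}\cdot 3\pi^2}L(2,8d')$), while the sum on the right of (1) grows only like $\prod(p_i+3)$, roughly linear in $d$. Hence the ratio that must equal $K$ tends to zero, and the mechanism that cuts down to finitely many $d$ is not an integrality or parity obstruction but simply the inequality $K\ge 8$. The paper makes this explicit: it bounds the left side of (1) from below using $K\ge 8$ and $L(2,\chi)\ge \zeta(4)/\zeta(2)$, bounds the right side from above by the maximal possible volumes of $e^\perp$ (Lemma 28), and compares $(2p_1\cdots p_k)^{3/2}$ with $\frac{105}{16}(p_1+3)\cdots(p_k+3)$. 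As stated, your monotonicity claim would not reduce anything to finitely many cases.

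Second, aiming for an exact closed formula for $K(d)$ is harder than what is needed and is not what the paper does. Which roots actually exist depends on a battery of Legendre-symbol conditions (Lemma 21), and the precise volumes of the $e^\perp$ depend on the $\mathbb{Z}_2$-type of $e^\perp$ (Lemmas 19, 20, 28); there is no tidy closed form uniform in $d$. The paper sidesteps this by using \emph{upper} bounds for each possible $e^\perp$-volume to kill all but a handful of $d$, and only then invokes the congruence obstructions of Lemma 21 and exact $L$-values to finish off the survivors $d'\in\{1,3,5,7,3\cdot5,3\cdot7,3\cdot11,3\cdot5\cdot7\}$ case by case. Your plan of ``denominator doesn't clear / wrong parity'' never actually enters.

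One further remark: the paper's own argument leaves \emph{two} values standing, $d'=1$ (i.e.\ $d=2$, with $K=14$) and $d'=3$ (i.e.\ $d=6$, with $K\le 10$); the case $d=2$ is genuinely free, and $d=6$ is excluded only afterwards in the Conclusion using van der Geer's generators. Your sketch asserts that $d=6$ is the sole survivor of the volume argument, which is not what the computation gives.
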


The idea of the proof is the following. We find the lower bound for the left hand side of formula (1) and an upper bound for the right hand side. Since the l.h.s. grows faster, we obtain that the formula (1) doesn't hold when the discriminant is big enough. We notice, that there are exactly three generators for the algebra of $O(L)$-modular forms, and since the weight of each is at least two, $K\geq 8$.
Besides, because $[O(L):O^+(L)]=2$, $Vol(O^+(L))=2Vol(O(L))$, and since both sides of (1) are multiplied by 2, we can consider $Vol(O(L))$ instead of $Vol(O^+(L))$. 

 
\section*{The number of classes in the genus}

\begin{theorem}\cite{kne}
Let $L$ be an indefinite lattice. If the genus of $L$ contains more than one class, then there exists a prime $p$ such that $L\otimes \mathbb{Z}_p$ can be diagonalised and the diagonal entries consist of distinct powers of $p$.
\end{theorem}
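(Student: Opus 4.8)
The statement is a classical result of Kneser, and the natural route is through the theory of spinor genera: I would reduce the question of when the genus contains several classes to a purely local computation of spinor norm groups, and then read off the diagonalizability condition from that computation. Throughout I use the spinor norm $\theta$ and write $L_p=L\otimes\mathbb{Z}_p$.

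First I would recall the decomposition of a genus into proper spinor genera. Since $L$ is indefinite of rank $\geq 3$ (in the application of interest the signature is $(2,2)$, so the rank is $4$), the strong approximation theorem for the spin group (Eichler) shows that each proper spinor genus consists of a single isometry class; hence the number of classes in the genus equals the number of proper spinor genera. Thus the hypothesis that the genus contains more than one class is equivalent to the existence of more than one proper spinor genus. Next I would express that number as an idele index
$$g^+(L)=[\,\mathbb{J}_{\mathbb{Q}}:\mathbb{Q}^*\,\theta(O^+(L_\infty))\prod_{p}\theta(O^+(L_p))\,],$$
which is always a power of $2$. Because $L$ is indefinite, the archimedean factor $\theta(O^+(L_\infty))$ is the full group $\mathbb{R}^*$. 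Since $\mathbb{Q}$ has class number $1$, one has $\mathbb{J}_{\mathbb{Q}}=\mathbb{Q}^*\cdot\mathbb{R}^*\cdot\prod_p\mathbb{Z}_p^*$; consequently, if $\theta(O^+(L_p))\supseteq\mathbb{Z}_p^*$ for every finite prime $p$, the index above equals $1$ and the genus is a single class. Contrapositively, more than one class forces the existence of a prime $p$ with $\theta(O^+(L_p))\not\supseteq\mathbb{Z}_p^*$.

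It then remains to identify those $p$-adic lattices whose proper spinor norm group fails to contain all units. Here I would invoke the explicit local computation of $\theta(O^+(L_p))$ in terms of the Jordan splitting $L_p=\bigoplus_i L_i$, where $L_i$ is scaled by $p^{a_i}$. The decisive point is that as soon as some Jordan constituent has rank $\geq 2$, or two constituents share a scale, the group $O^+(L_p)$ already contains rotations whose spinor norms sweep out all of $\mathbb{Z}_p^*$ (for odd $p$ this follows from computing the spinor norms of products of reflections inside a binary block). Hence $\theta(O^+(L_p))\not\supseteq\mathbb{Z}_p^*$ can occur only when every Jordan constituent is one-dimensional and the scales $p^{a_i}$ are pairwise distinct, that is, exactly when $L_p$ is diagonalizable with diagonal entries of distinct $p$-adic valuations. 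Combining this with the previous step yields the theorem.

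The main obstacle is this last step: the explicit determination of the local spinor norm groups, and in particular the delicate analysis at the dyadic prime $p=2$, where the Jordan decomposition is not unique and the generation of $O^+(L_2)$ by reflections and Eichler transformations must be tracked together with their spinor norms. This dyadic case is what makes Kneser's theorem genuinely technical, while the odd-prime case is comparatively transparent.
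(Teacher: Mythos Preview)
The paper does not prove this theorem at all: it is quoted from Kneser \cite{kne} and used as a black box in the lemmas that follow (Lemmas~1--4, 10, 14, 15). So there is no ``paper's own proof'' to compare your proposal against.

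That said, your outline is exactly the classical argument behind Kneser's result. The chain of reductions --- strong approximation for the spin group in the indefinite case (Eichler), so that classes coincide with proper spinor genera; the idele-index formula for the number of spinor genera; the observation that over $\mathbb{Q}$ with class number one the index collapses to $1$ as soon as every local spinor norm group $\theta(O^+(L_p))$ contains $\mathbb{Z}_p^*$; and finally the local computation showing $\theta(O^+(L_p))\supseteq\mathbb{Z}_p^*$ whenever some Jordan block has rank $\geq 2$ or two blocks share a scale --- is the standard route and is correct. Your identification of the dyadic prime as the genuinely technical case is also accurate. One small caveat: Kneser's theorem as stated applies to indefinite lattices in general, while strong approximation requires rank $\geq 3$; you implicitly restrict to rank $\geq 3$ (which is harmless here since the paper only uses ranks $3$ and $4$), but a fully general statement would need a word about the binary case.
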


\begin{lemma}
Each even integral lattice of signature $(2,\ 1)$ with fixed square-free discriminant $d=-2p_1\ldots p_m=:2d'$ and with fixed Hasse invariant belongs to the same class. 
\end{lemma}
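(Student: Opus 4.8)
The plan is to apply Kneser's theorem (Theorem 5) in its contrapositive form: to show that the genus of $L$ consists of a single class, it suffices to verify that for \emph{no} prime $p$ can $L\otimes\mathbb{Z}_p$ be diagonalised with diagonal entries that are pairwise distinct powers of $p$. Before that I would record that the data in the statement — the signature $(2,1)$, the square-free discriminant $d=2d'$, and the Hasse invariant — already determine the genus of $L$, so that ``same invariants'' and ``same genus'' may be used interchangeably. Since $L$ is indefinite, Theorem 5 is applicable.

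For the genus-determination step I would run through the localisations. For a prime $p\nmid d$ the lattice $L\otimes\mathbb{Z}_p$ is unimodular of rank $3$, hence determined up to isometry by its determinant in $\mathbb{Z}_p^\times/(\mathbb{Z}_p^\times)^2$, which is read off from $d$. For an odd prime $p\mid d$ we have $v_p(d)=1$ because $d$ is square-free, so the Jordan splitting of $L\otimes\mathbb{Z}_p$ is forced to be a rank-$2$ unimodular component orthogonal to a rank-$1$ component of scale $p$; such a $\mathbb{Z}_p$-lattice is determined by the $\mathbb{Q}_p$-equivalence class of $L\otimes\mathbb{Q}_p$. At $p=2$ one uses in addition that $L$ is even: since $v_2(d)=1$, the even $\mathbb{Z}_2$-lattice is again of the most elementary Jordan type and is pinned down by its discriminant together with the Hasse invariant. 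Combining the local pieces with the real signature, the triple (signature, discriminant, Hasse invariant) fixes the genus.

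The core of the argument is then a valuation count. If $L\otimes\mathbb{Z}_p\cong\langle a_1,a_2,a_3\rangle$ is any diagonalisation over $\mathbb{Z}_p$, then
\begin{equation}
v_p(a_1)+v_p(a_2)+v_p(a_3)=v_p(\det L)=v_p(d)\le 1,
\end{equation}
the last inequality holding because $d$ is square-free. Three nonnegative integers with sum at most $1$ cannot be pairwise distinct, so two of the valuations coincide and the diagonal entries cannot be distinct powers of $p$. The case $p=2$ requires only a remark: either $L\otimes\mathbb{Z}_2$ fails to diagonalise at all (when its even Jordan splitting contains a hyperbolic or the companion binary block), in which case the hypothesis of Theorem 5 fails trivially, or it diagonalises and the same valuation count applies. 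In all cases the hypothesis of Kneser's theorem is violated, so the genus contains a single class, which is the assertion.

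I expect the main obstacle to be the genus-determination step at $p=2$: one must check carefully that, under the even condition and $v_2(d)=1$, the local lattice is genuinely determined by the discriminant and the Hasse invariant, rather than splitting into several $2$-adic classes. The indefinite valuation bound that drives the Kneser argument is, by contrast, immediate once the square-freeness of $d$ is used.
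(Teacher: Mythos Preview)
Your overall architecture coincides with the paper's: first invoke Kneser to collapse class to genus (your valuation count is exactly the content of the paper's one-line appeal to square-freeness), then pin down the genus by a prime-by-prime local analysis. Your treatment of the primes $p\nmid d$ and the odd primes $p\mid d$ is correct and matches the paper's enumeration of the four $\mathbb{Z}_p$-candidates cut down to one by discriminant and Hasse symbol.

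The gap is precisely where you flag it, at $p=2$, and it is a genuine one. Your sentence ``the even $\mathbb{Z}_2$-lattice is \ldots\ pinned down by its discriminant together with the Hasse invariant'' appeals only to $\mathbb{Q}_2$-invariants, and $\mathbb{Q}_2$-equivalence does not in general force $\mathbb{Z}_2$-equivalence. Concretely, evenness together with $v_2(d)=1$ does force the Jordan shape (rank-$2$ type~II unimodular block plus a rank-$1$ block of scale $2$), but that still leaves two apparent candidates,
\[
U\oplus\langle 2d'\rangle\qquad\text{and}\qquad \begin{pmatrix}2&1\\1&2\end{pmatrix}\oplus\langle -6d'\rangle,
\]
which are $\mathbb{Q}_2$-equivalent (same discriminant, same Hasse symbol), so your invariants cannot separate them and cannot, by themselves, prove they coincide. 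The paper closes this gap not by invariants but by the Conway--Sloane calculus: in their symbol notation the two forms are $1^{+2}_{II}2^{\omega(d')}_{d'}$ and $1^{-2}_{II}2^{-\omega(d')}_{-3d'}$, and a single \emph{sign walk} across the adjacent scales $1$ and $2$ (legal because the types differ) flips both upper signs and shifts the oddity by $4$, carrying one symbol to the other since $d'+4\equiv -3d'\pmod 8$. That computation is the missing ingredient; once you insert it, your proof and the paper's are the same.
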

\begin{proof}
The discriminant of $L$ is square-free so it follows from Kneser's theorem that the class of $L$ coincides with the genus of $L$. So it suffices to prove that all such lattices are locally equivalent. 
For each $p\neq p_i$ the lattices are equivalent since they are unimodular. For $p=p_i$ there exist exactly 4 different lattices over $\mathbb{Z}_{p_{i_j}}$ ($\langle 1 \rangle \oplus \langle 1 \rangle \oplus \langle p \rangle,\  \langle 1 \rangle \oplus \langle r \rangle \oplus \langle p \rangle,\ \langle 1 \rangle \oplus \langle 1 \rangle \oplus \langle rp \rangle,\ \langle 1 \rangle \oplus \langle r \rangle \oplus \langle rp \rangle  $, where $r$ is a fixed quadratic non-residue mod $p$. The Hasse invariant of the first and the third lattices equals to 1, of the second and the fourth to -1. Since this invariant is fixed, we have one pair of lattices left. The lattices in each pair have different discriminants, and because it is also fixed we obtain one lattice left.
We need to check the equivalence over $\mathbb{Z}_2$. The algorithm for doing it can be found in \cite{con} and \cite{con1}. There are 2 possibilities: $U\oplus \langle 2d'\rangle$ and $\begin{pmatrix}
2 & 1  \\
1 & 2 \\
\end{pmatrix} \oplus \langle -6d'\rangle$. The Conway-Sloane notation for them is $1^{+2}_{II}2^{\omega(d')}_{d'}$ and $1^{-2}_{II}2^{\omega(-3d')}_{-3d'}=1^{-2}_{II}2^{-\omega(d')}_{-3d'}$ respectively. Consider the first lattice. It has adjacent scales (1 and 2) and different types (the unimodular part is even, 2-modular part is odd). So we can apply sign walking, i.e. change upper signs and add 4 to the lower oddity. Since $d'+4=-3d'$ for $d'=\pm1,\pm3 \pmod{8}$, we get that $1^{+2}_{II}2^{\omega(d')}_{d'}\sim 1^{-2}_{II}2^{-\omega(d')}_{-3d'}$. So, these lattices are equivalent over $\mathbb{Z}_p$ for each $p$, i.e. belong to the same genus.
\end{proof}
\begin{lemma}
Each even integral lattice of signature $(2,\ 1)$ with fixed square-free discriminant $d=-4p_1\ldots p_m=:4d'$ and with fixed Hasse invariant belongs to the same class. 
\end{lemma}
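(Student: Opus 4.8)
The plan is to follow the same structure as the proof of the preceding lemma, adapting it to the case of discriminant $d = -4p_1\ldots p_m = 4d'$. Since the discriminant is square-free (here meaning $d'$ is square-free and coprime to $2$), Kneser's theorem again reduces the claim from an assertion about classes to an assertion about the genus: it suffices to show that any two even integral lattices of signature $(2,1)$ with the given discriminant and Hasse invariant are locally equivalent at every prime $p$. The argument then splits into three regimes: the odd primes $p \neq p_i$, the odd primes $p = p_i$ dividing $d'$, and the prime $2$.

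First I would dispose of the odd primes $p \nmid d$, where both lattices are $p$-unimodular and hence equivalent over $\mathbb{Z}_p$. For each odd $p = p_i$ dividing $d'$, I would repeat the enumeration from the previous lemma verbatim: over $\mathbb{Z}_{p}$ there are exactly four diagonal forms $\langle 1\rangle\oplus\langle 1\rangle\oplus\langle p\rangle$, $\langle 1\rangle\oplus\langle r\rangle\oplus\langle p\rangle$, $\langle 1\rangle\oplus\langle 1\rangle\oplus\langle rp\rangle$, $\langle 1\rangle\oplus\langle r\rangle\oplus\langle rp\rangle$ with $r$ a fixed nonresidue, and fixing both the Hasse invariant and the discriminant pins down a unique class, exactly as before. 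The local structure at odd primes is insensitive to whether the discriminant carries a factor of $2$ or of $4$, so this part transfers with no change.

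The essential new content, and the step I expect to be the main obstacle, is the equivalence over $\mathbb{Z}_2$: when $d = 4d'$ the $2$-adic structure differs from the $d = 2d'$ case because the lattice now has a genuine $4$-modular (scale $2$) Jordan block rather than a $2$-modular one, and the relevant Conway–Sloane symbols, oddities, and sign-walking relations must be recomputed. I would enumerate the admissible $2$-adic Jordan decompositions of signature $(2,1)$ with $2$-adic discriminant of the right valuation, write each in Conway–Sloane notation (tracking the $\omega$-oddity of $d'$ and the allowed sign characters), and then apply the sign-walking and oddity-fusion moves of \cite{con,con1} to show that the forms with equal Hasse invariant and equal discriminant collapse to a single $\mathbb{Z}_2$-class. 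The delicate points will be keeping the scale/type compatibility conditions for sign walking correct and verifying the arithmetic of the oddity congruences modulo $8$ in the $4d'$ setting; once these are checked, local equivalence at every prime follows, the genus contains a single class by Kneser, and the lemma is proved.
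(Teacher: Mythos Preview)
Your overall skeleton---Kneser to reduce class to genus, then local equivalence prime by prime---matches the paper, and the odd-prime analysis is identical. But your plan for $p=2$ anticipates the wrong mechanism, and you omit a preliminary step the paper uses.

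First, the paper begins by observing that the invariant factors must be $1,\,1,\,4d'$: the alternative $1,\,2,\,2d'$ would force a rank-one unimodular Jordan constituent, which cannot be even. You should record this before invoking Kneser and before enumerating $2$-adic forms; it is what pins the $2$-adic structure down to just two candidates, $U\oplus\langle 4d'\rangle$ and $\left(\begin{smallmatrix}2&1\\1&2\end{smallmatrix}\right)\oplus\langle -12d'\rangle$.

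Second, and more importantly, the $2$-adic step here is \emph{simpler} than in Lemma~1, not harder. You expect to need sign-walking and oddity-fusion, but sign-walking operates only between adjacent scales $2^i$ and $2^{i+1}$, and here the constituents sit at scales $1$ and $4$ with nothing in between---so that move is not available. Fortunately it is also not needed: the paper simply computes that the two candidate $\mathbb{Z}_2$-forms have \emph{opposite} Hasse invariants (one has $\epsilon_2=1$ exactly when $d'\equiv 1\pmod 4$, the other exactly when $d'\equiv 3\pmod 4$). Hence fixing the Hasse invariant already singles out a unique $\mathbb{Z}_2$-class, with no Conway--Sloane calculus required. This is the qualitative difference from Lemma~1, where the two $\mathbb{Z}_2$-candidates had the \emph{same} Hasse invariant and sign-walking was genuinely needed to identify them.
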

\begin{proof}
The invariant factors are $1,\ 1,\ 4d'$, otherwise the lattice couldn't be even, since it would have an unimodular component of odd dimension. It follows from Kneser's theorem that the class of $L$ coincides with the genus of $L$. So it suffices to prove that all such lattices are locally equivalent. 
For each $p\neq 2$ the proof is similar to the proof of lemma 1. 
Over $\mathbb{Z}_2$ there are two possibilities: $U\oplus \langle 4d'\rangle$ and $\begin{pmatrix}
2 & 1  \\
1 & 2 \\
\end{pmatrix} \oplus \langle -12d'\rangle$. They differ by Hasse invariant, because $\epsilon_2(U\oplus \langle 4d' \rangle)=1 \Leftrightarrow d'=1 \pmod{4} \Leftrightarrow \epsilon_2\left(\begin{pmatrix}
2 & 1  \\
1 & 2 \\
\end{pmatrix} \oplus \langle -12d' \rangle \right)=-1$. Since it is fixed, the lemma is proved.
\end{proof}
\begin{lemma}
Each even integral lattice of signature $(2,\ 1)$ with fixed square-free discriminant $d=-8p_1\ldots p_m=:-8d'$, fixed Hasse invariant and invariant factors $2,\ 2,\ 2d'$ belongs to the same class. 
\end{lemma}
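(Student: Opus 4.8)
The plan is to follow the same strategy used in Lemmas 1 and 2, reducing the classification to local equivalence at each prime via Kneser's theorem, since the square-free discriminant guarantees the class equals the genus. First I would handle the odd primes $p = p_i$ exactly as in Lemma 1: over $\mathbb{Z}_p$ the lattice splits as a unimodular part plus a rank-one $p$-modular part, there are four diagonal possibilities, and fixing both the Hasse invariant and the discriminant pins the lattice down to a single local class. For all $p \nmid 2d'$ the lattice is unimodular and hence unique over $\mathbb{Z}_p$. So the entire content of the lemma lives at the prime $2$.

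The heart of the argument, and the step I expect to be the main obstacle, is the analysis over $\mathbb{Z}_2$. Unlike the discriminant $4d'$ case of Lemma 2, where the invariant factors $1,1,4d'$ were forced, here the invariant factors $2,2,2d'$ are \emph{hypothesised} rather than deduced, which signals that at the prime $2$ there is genuine room for several Jordan splittings and the classification is more delicate. I would enumerate the possible $2$-adic Jordan decompositions compatible with discriminant $-8d'$ and invariant factors $2,2,2d'$, writing each candidate in Conway--Sloane $p$-adic symbol notation (for instance forms built from $2$-modular and $2d'$-scaled components), and then apply the Conway--Sloane equivalence criteria: matching scales, ranks, determinants, oddities, and the type (even/odd) of each Jordan constituent. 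The key technical tool, as in Lemma 1, is \emph{sign walking}: when adjacent Jordan blocks have opposite type one may flip the upper signs of the symbol while adding $4$ to the lower oddity, and I would use the congruence conditions on $d' \bmod 8$ to show that any two admissible symbols are connected by such moves together with the fixing of the Hasse invariant.

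Concretely, I would compute the oddity and the sign (the $\pm$ superscript encoding the Hasse--Witt or determinant datum) of each candidate symbol as an explicit function of $d' \bmod 8$, then verify that once the Hasse invariant is fixed the remaining symbols coincide or are sign-walking equivalent. The invariant-factor hypothesis is what rules out the Jordan types in which the $2$-modular part has rank one (which would correspond to a different set of invariant factors), so I would first justify that the prescribed invariant factors force the $2$-modular Jordan constituent to have rank exactly two, leaving only the scale-$2d'$ piece at the top scale. The main obstacle is simply the bookkeeping of types and oddities at $2$: one must be careful that the $2$-modular rank-two block is \emph{even} (of the form $U$ or $\begin{pmatrix} 2 & 1 \\ 1 & 2 \end{pmatrix}$ scaled by $2$) and track how its type interacts with the odd scale-$2d'$ block under sign walking.

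Having established local equivalence at every prime, I would conclude that all lattices satisfying the hypotheses lie in a single genus, and hence, by Kneser's theorem applied to the indefinite lattice $L$ of signature $(2,1)$ with square-free discriminant, in a single class. This completes the proof.
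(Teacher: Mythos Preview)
Your overall plan---Kneser's theorem plus local analysis, with the odd primes handled exactly as in Lemma~1---is correct and matches the paper. The misconception is at the prime $2$. You write that the invariant factors force ``the $2$-modular Jordan constituent to have rank exactly two, leaving only the scale-$2d'$ piece at the top scale'', and you intend to use sign walking between these adjacent scales. But $d'$ is odd, so over $\mathbb{Z}_2$ the invariant factor $2d'$ has the same $2$-adic valuation as $2$: the entire lattice sits at a \emph{single} Jordan scale (scale $2$), as one rank-$3$ constituent. There are no adjacent scales at all, so sign walking is simply not available here.

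The paper's argument at $p=2$ is therefore in the spirit of Lemma~2 rather than Lemma~1. One lists the two possible scale-$2$ rank-$3$ forms, namely $U(2)\oplus\langle 2d'\rangle$ and $\begin{pmatrix}4&2\\2&4\end{pmatrix}\oplus\langle -6d'\rangle$, and computes their Hasse invariants directly: they are opposite (the first has $\epsilon_2=1$ exactly when $d'\equiv 1\pmod 4$, the second exactly when $d'\equiv 3\pmod 4$). Hence fixing the Hasse invariant already singles out a unique $\mathbb{Z}_2$-class, with no walking required. Once you correct the Jordan-scale picture your approach would collapse to this computation, but as written the plan would stall when you discover there are no adjacent blocks to walk between.
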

\begin{proof}
 It follows from Kneser's theorem that the class of $L$ coincides with the genus of $L$. So it suffices to prove that all such lattices are locally equivalent. For each $p\neq 2$ the proof is similar to the proof of lemma 1. Over $\mathbb{Z}_2$ there are two possibilities: $U(2)\oplus \langle 2d' \rangle$ and $\begin{pmatrix}
4 & 2  \\
2 & 4 \\
\end{pmatrix} \oplus \langle -6d' \rangle$. They differ by Hasse invariant, because $\epsilon_2(U(2)\oplus \langle 2d' \rangle) = 1 \Leftrightarrow d'=1 \pmod{4} \Leftrightarrow$ $ \epsilon_2 \left(\begin{pmatrix}
4 & 2  \\
2 & 4 \\
\end{pmatrix} \oplus \langle -6d' \rangle \right) =-1$. Since it is fixed, the lemma is proved.
\end{proof}

\begin{lemma}
There exist two classes of ternary even lattices $M$ of signature $(2,\ 1)$ and discriminant $d=-16p_1\ldots p_m=:-16d'$ with invariant factors $2,\ 2,\ 4d'$ and fixed Hasse invariant.
\end{lemma}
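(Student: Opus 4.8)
The plan is to reduce, as in Lemmas 1--3, to a purely local computation, the essential difference being that the analysis at $2$ will now leave \emph{two} inequivalent genera rather than one. First I would show that genus and class coincide for every such $M$, so that counting classes amounts to counting genera. By Kneser's theorem it suffices to check that at no prime $p$ can $M\otimes\mathbb{Z}_p$ be diagonalised into \emph{distinct} powers of $p$. For $p\nmid 2d'$ the lattice is unimodular and all three scales equal $p^0$; for odd $p\mid d'$ the invariant factors force the scale pattern $p^0,p^0,p^1$, again with a repetition; and at $p=2$ the invariant factors $2,2,4$ pin the $2$-adic discriminant group to $(\mathbb{Z}/2)^2\oplus\mathbb{Z}/4$, so any diagonalisation has scale multiset $\{2,2,4\}$ with the scale $2$ repeated, while the even binary constituents $U(2)$ and $\left(\begin{smallmatrix}4&2\\2&4\end{smallmatrix}\right)$ do not diagonalise over $\mathbb{Z}_2$ at all. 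In every case distinct powers are impossible, so each genus consists of a single class, and the count of classes equals the count of genera.

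Having reduced to counting genera, I would fix the localisations away from $2$ exactly as in Lemma 1: at each odd prime the four possible types are cut down to a single one by fixing the discriminant and the Hasse invariant, and the archimedean place is fixed by the signature $(2,1)$. It then remains to enumerate the even $\mathbb{Z}_2$-lattices whose Jordan decomposition is a scale-$2$ constituent of rank $2$ together with a scale-$4$ constituent of rank $1$, whose $2$-unit determinant lies in the prescribed class of $-d'$ in $\mathbb{Z}_2^*/(\mathbb{Z}_2^*)^2$, and whose Hasse invariant is the fixed one. Using the Conway--Sloane $2$-adic symbols (the scale-$2$ block may be of type $\mathrm{I}$ or of type $\mathrm{II}$, whereas the scale-$4$ block of rank $1$ is necessarily of type $\mathrm{I}$), I would write down all admissible symbols subject to the oddity formula, which ties the total oddity to the signature modulo $8$.

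The final and most delicate step is to carry out the Conway--Sloane reduction---sign walking across the adjacent scales $2$ and $4$, together with oddity fusion---and to decide which symbols are genuinely inequivalent. In Lemmas 1--3 these moves either identified the two candidates outright or separated them by the Hasse invariant, leaving a single class; here I expect the extra freedom coming from the two possible types of the rank-$2$ block, constrained by the fixed determinant and Hasse invariant, to leave exactly two orbits, distinguished by their oddity (equivalently, by whether the scale-$2$ block is of even or of odd type). Verifying that precisely two symbols survive---neither collapsing under sign walking nor being excluded by the oddity formula---is the main obstacle, and it is exactly what forces this case to yield two classes instead of one. Combining the two surviving $2$-adic classes with the unique localisations at all other places produces exactly two genera, each a single class by the first step, which proves the lemma.
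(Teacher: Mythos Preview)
Your plan is correct and mirrors the paper's argument almost exactly: reduce to counting genera via Kneser, pin down the localisations at odd primes by discriminant and Hasse invariant as in Lemma~1, and then split the $2$-adic analysis according to whether the rank-$2$ scale-$2$ Jordan block is of type~II (even) or type~I (odd). The paper carries out precisely the verification you flag as the main obstacle: in the type-II case it shows the two candidates $U(2)\oplus\langle 4d'\rangle$ and $\left(\begin{smallmatrix}4&2\\2&4\end{smallmatrix}\right)\oplus\langle -12d'\rangle$ sign-walk into one another, and in the type-I case it tabulates the possibilities $\langle 2\epsilon_1\rangle\oplus\langle 2\epsilon_2\rangle\oplus\langle -4d'\epsilon_1\epsilon_2\rangle$ over $\epsilon_1,\epsilon_2,d'\bmod 8$ to see that fixing discriminant and Hasse invariant leaves a unique symbol---so exactly two classes survive, one of each type.
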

\begin{proof}
It follows from Kneser's theorem that the class of $L$ coincides with the genus of $L$. So it suffices to prove that all such lattices are locally equivalent. 
For each $p\neq 2$ the proof is similar to the proof of lemma 1. 
There are 2 possibilities for $p=2$: the Gram matrix of $M$ can be the doubled matrix of even or odd lattice. Let $M$ be the doubled matrix of an even lattice, then it could be: $U(2)\oplus \langle 4d' \rangle$ or $\begin{pmatrix}
4 & 2  \\
2 & 4 \\
\end{pmatrix} \oplus \langle -12d' \rangle$. In the Conway notation they have the form $2_{II}^{+2}4_{d'}^{\omega(d')}$ and $2_{II}^{-2}4_{-3d'}^{-\omega(d')}$. We can apply "sign walking" and because $d'+4=-3d' \pmod{8}$, we obtain that this lattices are equivalent over $\mathbb{Z}_2$. Since they are equivalent over all $\mathbb{Z}_p$, they belong to the same class.\\
Let $M$ be the doubled matrix of an odd lattice, then $M=\langle 2\epsilon_1\rangle \oplus \langle 2\epsilon_2\rangle \oplus \langle -4d'\epsilon_1\epsilon_2\rangle$. 
We note that the Hasse invariant is uniquely defined by $\epsilon_1,\ \epsilon_2,\ d'$. Going through all the possibilities on $\epsilon_1,\ \epsilon_2,\ d' \ \pmod{8}$, we get that for each $\epsilon_1$, $\epsilon_2$ the fixed $d'$ and $\epsilon$ uniquely determine the lattice:
\begin{center}
\begin{tabular}{|c|c|c|}
\hline
$d' \pmod{8}$ & $\epsilon \pmod{8}$ & $M$ \\
\hline
1 & 1 & $[2^{+2}4^{+1}]_1$ \\
1 & -1 & $[2^{+2}4^{+1}]_{-3}$ \\
-1 & 1 & $[2^{+2}4^{+1}]_3$ \\
-1 & -1 & $[2^{+2}4^{+1}]_{-1}$ \\
3 & 1 & $[2^{+2}4^{-1}]_{-1}$ \\
-3 & 1 & $[2^{+2}4^{-1}]_{-3}$ \\
3 & -1 & $[2^{+2}4^{-1}]_3$ \\
-3 & -1 & $[2^{+2}4^{-1}]_{1}$ \\
\hline
\end{tabular}
\end{center}
Thus, there are two conjugacy classes: the first corresponds to the doubled even lattice, the second to the doubled odd lattice.
\end{proof}

\section*{Reflections}

Let $L$ be an even lattice, $e\in L$ -- primitive $2k$-root of $L$. $\langle e \rangle = \mathbb{Z}e$, $\langle e^\perp \rangle = L\cap e^\perp$.

\begin{lemma}
There are two possibilities:\\
a) $L=\langle e\rangle\oplus\langle e^\perp\rangle$\\
b) $[L:\langle e\rangle\oplus\langle e^\perp\rangle]=2$
\end{lemma}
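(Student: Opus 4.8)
The plan is to reduce everything to a single arithmetic invariant of $e$, its \emph{divisor}. Let $d_e$ be the positive generator of the ideal $\{(x,e):x\in L\}\subseteq\mathbb{Z}$. Since $(e,e)=2k$ itself occurs as such a value, $d_e\mid 2k$. I claim the index in question is given by the clean formula
$$[L:\langle e\rangle\oplus\langle e^\perp\rangle]=\frac{(e,e)}{d_e}=\frac{2k}{d_e},$$
so that the whole lemma comes down to showing that $d_e$ can only equal $k$ or $2k$.

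To prove the formula I would pick $x_0\in L$ with $(x_0,e)=d_e$ and observe that for every $x\in L$ the vector $x-\frac{(x,e)}{d_e}x_0$ pairs to zero with $e$, hence lies in $\langle e^\perp\rangle$; this gives a direct-sum decomposition $L=\mathbb{Z}x_0\oplus\langle e^\perp\rangle$ (directness follows from $d_e\neq 0$, which holds because $(e,e)\neq 0$). Writing $e=a\,x_0+y$ with $y\in\langle e^\perp\rangle$ and pairing both sides with $e$ yields $a\,d_e=(e,e)$, i.e. $a=2k/d_e$. Passing to the quotient $L/\langle e^\perp\rangle\cong\mathbb{Z}\,\overline{x_0}$, the image of $\langle e\rangle\oplus\langle e^\perp\rangle$ is exactly $a\mathbb{Z}\,\overline{x_0}$, so the index equals $|a|=2k/d_e$, as claimed.

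The remaining step, and the only place where the hypothesis that $e$ is a \emph{root} is used, is to constrain $d_e$. Being a root means that the reflection $r_e(x)=x-\frac{2(x,e)}{(e,e)}e=x-\frac{(x,e)}{k}e$ preserves $L$, which forces $\frac{(x,e)}{k}\in\mathbb{Z}$ for all $x\in L$, i.e. $k\mid d_e$. Together with $d_e\mid 2k$ this leaves only $d_e=2k$ (index $1$, case a) and $d_e=k$ (index $2$, case b).

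I expect no serious obstacle: the computation is elementary linear algebra over $\mathbb{Z}$. The one point that needs care is precisely the use of the root hypothesis — for an \emph{arbitrary} primitive vector of norm $2k$ the divisor $d_e$ may be any divisor of $2k$, and the index correspondingly as large as $2k$ (for instance $e=f_1+k f_2$ in a hyperbolic plane $U$ with $(f_1,f_2)=1$ has $d_e=1$ and index $2k$), so the dichotomy genuinely depends on $r_e\in O(L)$ and not on the norm of $e$ alone.
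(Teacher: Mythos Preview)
Your proof is correct and follows essentially the same route as the paper's: both arguments use the root condition $r_e\in O(L)$ to force $k\mid (x,e)$ for all $x\in L$, and then read off the index from how $\langle e\rangle$ sits inside the projection of $L$ onto $\mathbb{R}e$. You phrase this via the divisor $d_e$ and the auxiliary decomposition $L=\mathbb{Z}x_0\oplus\langle e^\perp\rangle$, while the paper works directly with $pr_eL\subseteq\frac{1}{2}\mathbb{Z}e$, but these are the same computation (indeed $[pr_eL:\langle e\rangle]=2k/d_e$); your closing remark on why the root hypothesis cannot be dropped is a nice addition.
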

\begin{proof}
Since $e$ -- $2k$-root, $\frac{2e}{2k}\in L^V \ \Rightarrow \ (e,L)=k\mathbb{Z}$. So, the projection of $L$ on $\mathbb{R}e$ consists of multiples of $\frac{k}{2k}e=\frac{e}{2}$. Since $e\in pr_eL$, $[L:\langle e\rangle\oplus\langle e^\perp\rangle]=[pr_eL:\langle e \rangle]=1$ or $2$.
\end{proof}

\begin{lemma}
Automorphism $\phi:\langle e^\perp\rangle \rightarrow \langle e^\perp\rangle$ can be continued to the  automorphism of $L$ $\Leftrightarrow$ $\phi:pr_{e^\perp}L/\langle e^\perp\rangle = id$
\end{lemma}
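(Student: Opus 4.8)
The plan is to realise the extension concretely through the orthogonal projection and then to analyse the single glue vector furnished by Lemma 5. Work over $V=L\otimes\mathbb{R}=\mathbb{R}e\oplus e^\perp$ and let $p\colon V\to e^\perp$ be the orthogonal projection, so that $pr_{e^\perp}L=p(L)$. Since $\langle e^\perp\rangle$ spans $e^\perp$, the isometry $\phi$ extends uniquely to a linear isometry $\phi_{\mathbb{R}}$ of $e^\perp$; it is this extension that is meant when one speaks of the action of $\phi$ on $pr_{e^\perp}L/\langle e^\perp\rangle$, and the condition of the lemma is that $\phi_{\mathbb{R}}$ preserves $p(L)$ and induces the identity on $p(L)/\langle e^\perp\rangle$.

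First I would reduce the notion of ``extension'' to one that fixes $e$. If $\tilde\phi\in O(L)$ restricts to $\phi$ on $\langle e^\perp\rangle$, then it preserves $e^\perp$ (being an isometry carrying the spanning set $\langle e^\perp\rangle$ to itself), so $\tilde\phi(e)=\pm e$; composing with the reflection $r_e$, which fixes $\langle e^\perp\rangle$ pointwise and sends $e\mapsto -e$, I may assume $\tilde\phi(e)=e$. Such a $\tilde\phi$ then equals $id\oplus\phi_{\mathbb{R}}$ on $V=\mathbb{R}e\oplus e^\perp$ and in particular commutes with $p$. In case (a) of Lemma 5 the quotient $pr_{e^\perp}L/\langle e^\perp\rangle$ is trivial, $L=\langle e\rangle\oplus\langle e^\perp\rangle$, and $id\oplus\phi_{\mathbb{R}}$ visibly preserves $L$; so both sides of the equivalence hold for every $\phi$ and there is nothing to prove.

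The substantial case is (b). Here Lemma 5 gives a glue vector $v\in L$ with $pr_e v=\frac{e}{2}$, so $v=\frac{e}{2}+v_\perp$ with $v_\perp=p(v)$, where $2v_\perp\in\langle e^\perp\rangle$ but $v_\perp\notin\langle e^\perp\rangle$, and $L=\langle e\rangle\oplus\langle e^\perp\rangle+\mathbb{Z}v$. Since $\ker p\cap L=\langle e\rangle$ and $p$ fixes $\langle e^\perp\rangle$, the projection identifies $pr_{e^\perp}L/\langle e^\perp\rangle\cong L/(\langle e\rangle\oplus\langle e^\perp\rangle)\cong\mathbb{Z}/2$, generated by the class of $v_\perp$. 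For the forward implication, if $\tilde\phi=id\oplus\phi_{\mathbb{R}}$ lies in $O(L)$ then $\tilde\phi(v)=\frac{e}{2}+\phi_{\mathbb{R}}(v_\perp)\in L$, and applying $p$ shows $\phi_{\mathbb{R}}(v_\perp)\in p(L)$; since $pr_e(\tilde\phi(v))=\frac{e}{2}$, the vector $\tilde\phi(v)$ lies in the nontrivial coset, so $\tilde\phi(v)-v=\phi_{\mathbb{R}}(v_\perp)-v_\perp\in\langle e^\perp\rangle$, i.e. $\phi_{\mathbb{R}}$ fixes the generating class and acts as the identity on the quotient. Conversely, if $\phi_{\mathbb{R}}(v_\perp)\equiv v_\perp\pmod{\langle e^\perp\rangle}$, say $\phi_{\mathbb{R}}(v_\perp)=v_\perp+w$ with $w\in\langle e^\perp\rangle$, then $\tilde\phi(v)=v+w\in L$, so $\tilde\phi$ carries the generators $e$, $\langle e^\perp\rangle$, $v$ of $L$ into $L$; as $\tilde\phi$ is an isometry of $V$, the lattice $\tilde\phi(L)\subseteq L$ has the same covolume as $L$, whence $\tilde\phi(L)=L$ and $\tilde\phi\in O(L)$ extends $\phi$.

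The only genuinely delicate point is conceptual rather than computational: $\phi$ is given only on the sublattice $\langle e^\perp\rangle$, whereas the quotient $pr_{e^\perp}L/\langle e^\perp\rangle$ involves the strictly larger lattice $p(L)$, so one must first pass to the real extension $\phi_{\mathbb{R}}$ to give the statement meaning, and then check that the arithmetic condition ``$\phi_{\mathbb{R}}$ fixes the glue class'' is exactly what is needed to land $v$ back in $L$. Once the glue vector is fixed, everything reduces to the single membership check $\phi_{\mathbb{R}}(v_\perp)-v_\perp\in\langle e^\perp\rangle$.
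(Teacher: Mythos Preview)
Your proof is correct and follows essentially the same approach as the paper's: reduce to an extension fixing $e$ via composition with $r_e$, dispose of the split case trivially, and in the index-$2$ case check that extendability is equivalent to the glue vector's $e^\perp$-projection being fixed modulo $\langle e^\perp\rangle$. Your write-up is slightly more explicit (spelling out the real extension $\phi_{\mathbb{R}}$ and closing the converse with a covolume argument rather than a direct coset check), but the substance is identical to the paper's argument.
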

\begin{proof}
The statement is clear if $L=\langle e\rangle\oplus\langle e^\perp\rangle$. Consider the case $[L:\langle e\rangle\oplus\langle e^\perp\rangle]=2$. Let $x\in L$ be a glueing vector, i.e. such that $(x,e)=k$. Then $L=\mathbb{Z} x\oplus \mathbb{Z}e\oplus e^\perp$.\\
$\Rightarrow$: If $\phi:\langle e^\perp\rangle \rightarrow \langle e^\perp\rangle$ is continued on $L$, then $\phi(e)=\pm e$, and, taking the composition $r_e\circ \phi$ if needed, we can consider only the case $\phi(e)=e$. Then $\phi(pr_e x)=\phi(\frac{e}{2})=\frac{e}{2}=pr_e(\phi(x))$. On the other hand, $\phi(x)\in L \ \Rightarrow \ \phi(x)-x\in L \ \Rightarrow \ \phi(pr_{e^\perp} x)-pr_{e^\perp} x \in \langle e^\perp \rangle \Rightarrow \phi(pr_{e^\perp} x)=pr_{e^\perp} x \pmod{\langle e^\perp \rangle}$. \\
$\Leftarrow$: Let $\phi(pr_{e^\perp} x)=pr_{e^\perp} x \pmod{\langle e^\perp \rangle}$. $L=\langle e \rangle \oplus \langle e^\perp \rangle \cup \frac{e}{2}+pr_{e^\perp} x=\langle e \rangle \oplus \langle e^\perp \rangle \cup \frac{e}{2}+\phi(pr_{e^\perp} x)$. Let $\phi$ act identically on $e$, then under the action of $\phi$ the glueing vector maps to the glueing vector, so $\phi$ is the automorphism of the whole lattice $L$.
\end{proof}

\begin{lemma}
Each automorphism $\phi:\langle e^\perp\rangle \rightarrow \langle e^\perp\rangle$ can be continued to the automorphism of $L$.
\end{lemma}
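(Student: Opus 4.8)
The plan is to run the statement through the criterion of Lemma 7 and then split into the two cases of Lemma 6. By Lemma 7 an isometry $\phi$ of $\langle e^\perp\rangle$ extends to $L$ if and only if it acts as the identity on $Q:=pr_{e^\perp}L/\langle e^\perp\rangle$, and by Lemma 6 the order of $Q$ is $1$ or $2$. In the split case $L=\langle e\rangle\oplus\langle e^\perp\rangle$ the group $Q$ is trivial, so every $\phi$ extends (let it fix $e$), and there is nothing to check. Thus I would concentrate entirely on the index-$2$ case $[L:\langle e\rangle\oplus\langle e^\perp\rangle]=2$.

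In that case, using the notation of Lemma 7, fix a glueing vector $x$ with $(x,e)=k$, so $pr_e x=\tfrac{e}{2}$ and $v:=pr_{e^\perp}x$ satisfies $x=\tfrac{e}{2}+v$. I would first record that $\bar v$ generates $Q\cong\mathbb{Z}/2$: indeed $2v=pr_{e^\perp}(2x)=2x-e$, and $(2x-e,e)=2k-2k=0$ shows $2x-e\in e^\perp\cap L=\langle e^\perp\rangle$, so $2\bar v=0$, while $v\notin\langle e^\perp\rangle$ because the index is $2$. Consequently the criterion of Lemma 7 collapses to the single equality $\phi(\bar v)=\bar v$, and this is the only thing to prove.

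To attack $\phi(\bar v)=\bar v$ I would pass to the discriminant group. Since $(v,y)=(x,y)\in\mathbb{Z}$ for all $y\in\langle e^\perp\rangle$, we have $v\in\langle e^\perp\rangle^{V}$, so $\bar v$ is a genuine element of order $2$ of $A:=\langle e^\perp\rangle^{V}/\langle e^\perp\rangle$. Any $\phi\in O(\langle e^\perp\rangle)$ extends $\mathbb{Q}$-linearly, preserves $\langle e^\perp\rangle^{V}$, and hence induces an automorphism of the finite group $A$ preserving its discriminant form; moreover evenness of $(x,x)=\tfrac{k}{2}+(v,v)$ forces $q(\bar v)=-\tfrac{k}{2}\pmod{2\mathbb{Z}}$. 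The key step is then to show that, among the order-$2$ elements of $A$, the glue class $\bar v$ is distinguished and therefore fixed by every form-preserving automorphism. Here I would invoke the explicit shape of the ternary discriminant groups computed in the preceding section: one checks that $\bar v$ is pinned down (by its $q$-value, equivalently by the fact that it must glue to the unique order-$2$ element $\overline{\tfrac{e}{2}}\in\langle e\rangle^{V}/\langle e\rangle$). Feeding $\phi(\bar v)=\bar v$ back into Lemma 7 yields the extension.

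The hard part is exactly this invariance $\phi(\bar v)=\bar v$. The tempting shortcut — ``$\phi$ acts on $Q\cong\mathbb{Z}/2$ and $\mathrm{Aut}(\mathbb{Z}/2)$ is trivial'' — is not justified a priori, because an abstract isometry of $\langle e^\perp\rangle$ need not preserve the intermediate lattice $pr_{e^\perp}L$ and could send $\bar v$ to a different class of order $2$ in $A$. So the genuine content is the canonical characterization of the glue class $\bar v$ inside the discriminant form, and I expect the main obstacle to be verifying this uniqueness for each family of lattices $\langle e^\perp\rangle$ produced in the genus computations above, rather than anything in the reduction itself.
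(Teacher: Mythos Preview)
The paper's own proof is precisely the two-line ``tempting shortcut'' you dismiss: it writes $pr_{e^\perp}L/\langle e^\perp\rangle = pr_e L/\langle e\rangle = \mathbb{Z}/2\mathbb{Z}$ and stops, since $\mathrm{Aut}(\mathbb{Z}/2\mathbb{Z})$ is trivial. So you are not merely taking a different route --- you are rejecting the paper's argument as unjustified. Your objection is legitimate: nothing in the setup forces an arbitrary $\phi\in O(\langle e^\perp\rangle)$ to preserve the overlattice $pr_{e^\perp}L$ inside $\langle e^\perp\rangle^V$, so one cannot simply say ``$\phi$ acts on $Q$'' and invoke $|\mathrm{Aut}(\mathbb{Z}/2\mathbb{Z})|=1$.

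That said, your replacement argument has a genuine gap of its own. You reduce everything to showing that the glue class $\bar v$ is singled out among the order-$2$ elements of $A$ by its $q$-value, and then defer this verification to the case-by-case genus computations. But the paper's own later analysis shows this uniqueness can fail. In the $d\equiv 2\pmod 4$ case with $e^2=-4e'$, $\det\langle e^\perp\rangle=-8d_e$ and $\langle e^\perp\rangle\cong\left(\begin{smallmatrix}4&2\\2&4\end{smallmatrix}\right)\oplus\langle -6d_e\rangle$, three admissible glue classes $(1,0,0),(0,1,0),(1,1,0)$ of the same even length are recorded; the swap of the two basis vectors of the rank-$2$ block is a global isometry of $\langle e^\perp\rangle$ exchanging $(1,0,0)$ and $(0,1,0)$, and by the extension criterion it therefore cannot extend to an $L$ glued along either of those two classes. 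So the step you correctly flag as ``the main obstacle'' does not always clear, and the lemma in the generality stated appears to fail in exactly those instances --- neither the paper's shortcut nor your discriminant-form route can close this.
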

\begin{proof}
$pr_{e^\perp}L/\langle e^\perp\rangle = pr_{e}L/\langle e \rangle= \mathbb{Z}/2\mathbb{Z}$. But $\mathbb{Z}/2\mathbb{Z}$ has no non-trivial automorphisms.
\end{proof}

\begin{lemma}
a) Let $x$ be a glueing vector for $\langle e \rangle \oplus \langle e^\perp \rangle$. Then $x\ \pmod{\langle e \rangle \oplus \langle e^\perp \rangle}$ is uniquely defined by an element of order 2 in $disc(\langle e^\perp \rangle):= \langle e^\perp \rangle ^V /  \langle e^\perp \rangle$.\\
b) Let $e$ be a primitive root of $L$ such that the class of $\langle e^\perp \rangle$ coincides with the genus. Then the number of roots, conjugated to $e$ in $L$ is equal to the number of glueing vectors $\pmod{\langle e \rangle \oplus \langle e^\perp \rangle}$.
\end{lemma}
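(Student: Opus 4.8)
The plan is to dispatch part a) by an explicit projection computation and then reduce part b) to a gluing count, with Lemma 8 serving as the exact mechanism that turns "conjugacy'' into "admissible gluing vector.''

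For part a) I would start from the defining property of a gluing vector, $(x,e)=k$, which is what makes $x$ generate $L/(\langle e\rangle\oplus\langle e^\perp\rangle)\cong\mathbb{Z}/2\mathbb{Z}$. Projecting gives $pr_e x=\frac{(x,e)}{(e,e)}e=\frac{e}{2}$, a fixed value, so all of the information in the coset of $x$ is carried by $pr_{e^\perp}x$. Since $x\in L$ pairs integrally with $\langle e^\perp\rangle$ we have $pr_{e^\perp}x\in\langle e^\perp\rangle^V$, and since $2x\in\langle e\rangle\oplus\langle e^\perp\rangle$ we get $2\,pr_{e^\perp}x\in\langle e^\perp\rangle$; hence $pr_{e^\perp}x$ has order dividing $2$ in $disc(\langle e^\perp\rangle)$. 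The order is exactly $2$, because $pr_{e^\perp}x\in\langle e^\perp\rangle$ would force $x-pr_{e^\perp}x=\tfrac{e}{2}\in L$, contradicting primitivity of $e$. Injectivity on cosets is the last step: if two gluing vectors have the same $e^\perp$-projection modulo $\langle e^\perp\rangle$, their difference has $e$-projection in $\langle e\rangle$ (the two projections agree up to $\pm\tfrac{e}{2}$, and $e/2-(-e/2)=e$) and $e^\perp$-projection in $\langle e^\perp\rangle$, so it lies in $\langle e\rangle\oplus\langle e^\perp\rangle$. This yields the claimed identification with an order-two class; evenness of $L$ moreover fixes the norm via $q(pr_{e^\perp}x)\equiv-\tfrac{k}{2}\pmod{2}$, singling out the admissible classes.

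For part b) I would first record the easy direction: if $e'=g(e)$ with $g\in O(L)$ then $(e',e')=(e,e)$, $\langle e'^\perp\rangle=g(\langle e^\perp\rangle)\cong\langle e^\perp\rangle$, and the gluing index is preserved. The substance is the converse, that any root $e'$ of the same norm whose complement lies in the genus of $\langle e^\perp\rangle$ is conjugate to $e$. Here the hypothesis that the class of $\langle e^\perp\rangle$ equals its genus promotes "same genus'' to an actual isometry $\psi:\langle e^\perp\rangle\to\langle e'^\perp\rangle$, and \emph{Lemma 8} lets me extend $\psi$ together with $e\mapsto e'$ to an element of $O(L)$: the gluing quotient is $\mathbb{Z}/2\mathbb{Z}$, which has no nontrivial automorphisms, so the extension is unobstructed and the composite maps $e$ to $e'$. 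Thus "conjugate to $e$'' is equivalent to "same norm, complement in the genus of $\langle e^\perp\rangle$, same index.''

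With this dictionary the count follows: by part a) each root conjugate to $e$ carries, through the order-two class $pr_{e^\perp}x\bmod\langle e^\perp\rangle$ of its own gluing vector, an admissible gluing datum, and conversely each admissible gluing vector reconstructs an even overlattice containing a primitive root of the prescribed norm and complement, which by the previous paragraph is conjugate to $e$. \emph{Lemma 8} is exactly what makes this correspondence one-to-one rather than one-to-orbit: because every automorphism of $\langle e^\perp\rangle$ already lifts to $O(L)$, there is no residual $O(\langle e^\perp\rangle)$-ambiguity left to quotient by, so distinct gluing classes produce distinct roots and vice versa. I expect the main obstacle to be precisely this bookkeeping — checking that passing between a concrete root $e'$ inside the fixed $L$ and an abstract admissible gluing class is a genuine bijection and not merely a surjection with multiplicity, and in particular keeping track of the identification $e\leftrightarrow -e$ so that the count of mirrors matches the count of gluing classes. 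The resolution is to transport each complement $\langle e'^\perp\rangle$ back to the reference copy $\langle e^\perp\rangle$ via an isometry supplied by class $=$ genus, and to use Lemma 8 to see that the induced order-two class is well defined and recovers $e'$ uniquely.
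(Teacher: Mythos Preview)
Your treatment of part a) is correct and matches the paper's argument, only with more detail on injectivity and on why the order is exactly $2$.

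In part b) there is a genuine gap. In your middle paragraph you write that the extension lemma ``lets me extend $\psi$ together with $e\mapsto e'$ to an element of $O(L)$: the gluing quotient is $\mathbb{Z}/2\mathbb{Z}$, which has no nontrivial automorphisms, so the extension is unobstructed,'' and you conclude that ``conjugate to $e$'' is equivalent to ``same norm, complement in the genus of $\langle e^\perp\rangle$, same index.'' This is a misapplication. The extension lemma you invoke (which is Lemma~7 in the paper's numbering, not Lemma~8 --- Lemma~8 is the statement you are proving) concerns extending an automorphism of $\langle e^\perp\rangle$ to $L$ \emph{with $e$ fixed}. It says nothing about extending an isometry $\psi:\langle e^\perp\rangle\to\langle e'^\perp\rangle$ combined with $e\mapsto e'$. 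For the latter, the correct criterion is that $\psi$ carry the gluing class of $e$ to the gluing class of $e'$; the fact that the quotient $pr_{e^\perp}L/\langle e^\perp\rangle$ is $\mathbb{Z}/2\mathbb{Z}$ is irrelevant here, because $\psi$ acts on all of $disc(\langle e^\perp\rangle)$, which may have many elements of order $2$, and need not send one distinguished class to another. So your intermediate conclusion is false as stated: roots with the same norm, complement class, and index need not be conjugate.

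Your final paragraph tries to repair this by bringing in the gluing data, but the clause ``because every automorphism of $\langle e^\perp\rangle$ already lifts to $O(L)$, there is no residual $O(\langle e^\perp\rangle)$-ambiguity'' goes in the wrong direction and does not establish the bijection. The paper's proof is more direct: it fixes an isometry $\langle e^\perp\rangle\to\langle e'^\perp\rangle$ (existence from class $=$ genus), and simply observes that the combined map on $\langle e\rangle\oplus\langle e^\perp\rangle$ extends to $L$ if and only if it sends the gluing vector $x$ to the gluing vector $x'$, which by part~a) is exactly the condition that the two order-$2$ classes in $disc(\langle e^\perp\rangle)$ coincide under the chosen identification. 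That criterion is what you should prove; the argument via ``unobstructed extension'' does not get there.
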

\begin{proof}
a) $pr_{e^\perp}x\subseteq \langle e^\perp \rangle ^V$, since $x$ is a glueing vector. Besides, $2pr_{e^\perp}x \in \langle e^\perp \rangle$, because $pr_{e^\perp}L/\langle e^\perp\rangle = \mathbb{Z}/2\mathbb{Z}$. $pr_e x = \frac{e}{2}$ is uniquely defined, so $x=\frac{e}{2}+pr_{e^\perp}x$ is uniquely defined.\\
b) Let $e,\ e'$ be the primitive $-2k$-roots of $L$, $x,\ x'$ -- the corresponding glueing vectors. Because $e^\perp$, $e'^\perp$ belong to the same class, there exists an automorphism, mapping $e$ to $e'$, $e^\perp$ to $e'^\perp$. Taking the composition with reflection $r_e$ if needed, we can think that this automorphism belongs to $O^+(L)$. This automorphism maps $x$ to $x'$ $\Leftrightarrow$ $x$ and $x'$ are defined by the same element in $disc(\langle e^\perp \rangle)=disc(\langle e'^\perp \rangle)$
\end{proof}
\subsection*{Reflections, $d=1 \pmod{4}$}
The invariant factors of $L$ are $1,\ 1,\ 1,\ d$. Each root has an even length, dividing $2d$ (doubled maximal invariant factor). That's why a priori $-2p_{i_1}\ldots p_{i_m}$ reflections can exist, where $\lbrace i_1,\ldots i_m \rbrace \subseteq \lbrace 1,\ldots,k \rbrace$ (including the empty set).

\begin{lemma} 
Let $e$ be a root,  $e^2=-2p_{i_1}\ldots p_{i_m}=:-2e'$. Then 
$det( \langle e^\perp \rangle)= \frac{-2d}{p_{i_1}\ldots p_{i_m}}=:-2d_e$.
\end{lemma}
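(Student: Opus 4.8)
The plan is to compute $\det(\langle e^\perp \rangle)$ from the determinant of $L$ together with the index of $\langle e \rangle \oplus \langle e^\perp \rangle$ in $L$. First I would record that for $d \equiv 1 \pmod 4$ the given Gram matrix yields $\det(L) = \det(U)\cdot\det\begin{pmatrix} 2 & 1 \\ 1 & \frac{1-d}{2}\end{pmatrix} = (-1)\cdot(-d) = d$. By the dichotomy of Lemma~5, either $L = \langle e\rangle \oplus \langle e^\perp\rangle$ or this direct sum has index $2$ in $L$; in the latter case the standard formula $\det(M) = [L:M]^2\det(L)$ for a finite-index sublattice $M$ gives $\det(\langle e\rangle \oplus \langle e^\perp\rangle) = 4d$, while the former gives $d$. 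Since $\det(\langle e\rangle \oplus \langle e^\perp\rangle) = (e,e)\cdot \det(\langle e^\perp\rangle) = -2e'\det(\langle e^\perp\rangle)$, the answer will be forced once I decide which case occurs.

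The decisive step is to rule out case (a). Here I would use parity: because $d \equiv 1 \pmod 4$ is odd and square-free and $e' = p_{i_1}\cdots p_{i_m}$ divides $d$, the quotient $d_e = d/e'$ is again an odd integer. If we were in case (a), then $\det(\langle e^\perp\rangle) = \det(L)/(e,e) = d/(-2e') = -d_e/2$, which is not an integer; this contradicts the integrality of the sublattice $\langle e^\perp\rangle$. Hence $L \neq \langle e\rangle \oplus \langle e^\perp\rangle$ and we are necessarily in case (b), so $[L : \langle e\rangle \oplus \langle e^\perp\rangle] = 2$. Equivalently, one can argue directly from $(e,L) = e'\mathbb{Z}$: a vector $x$ with $(x,e) = e'$ projects to $\tfrac{1}{2}e$, so $e/2 \in pr_e L$ and the index is $2$.

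With case (b) established, the computation closes: from $-2e'\det(\langle e^\perp\rangle) = 4d$ I obtain $\det(\langle e^\perp\rangle) = -2d/e' = -2d_e$, as claimed. I expect the main obstacle to be the justification that case (b) always holds rather than case (a); the parity argument above handles this cleanly precisely because the discriminant $d$ is odd in the case $d \equiv 1 \pmod 4$, which is why the analogous statements in the other residue classes will need separate treatment.
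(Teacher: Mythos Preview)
Your argument is correct and is exactly the paper's: invoke the dichotomy of Lemma~5 and rule out the direct-sum case by the parity contradiction between the even length $(e,e)=-2e'$ and the odd discriminant $\det L=d$. One caveat on your aside: the equality $(e,L)=e'\mathbb{Z}$ is not immediate from the root condition (which only gives $(e,L)\subseteq e'\mathbb{Z}$ together with $2e'\in(e,L)$), and deciding between $e'\mathbb{Z}$ and $2e'\mathbb{Z}$ again uses that $d$ is odd---so this ``alternative'' is really the same parity argument in disguise rather than an independent route.
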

\begin{proof}
There are two possibilities by lemma 5: $L=\langle e\rangle\oplus\langle e^\perp\rangle$ or
$[L:\langle e\rangle\oplus\langle e^\perp\rangle]=2$. The length of the root is even, but the discriminant of the lattice $L$ is odd, that's why we get a contradiction in the first case.
\end{proof}


\begin{lemma}
If $e$ is is primitive $-2e'$ root of $L$, then it is unique up to conjugation.
\end{lemma}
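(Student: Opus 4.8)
The plan is to prove the stronger statement that any two primitive roots $e_1,e_2$ of $L$ with $e_1^2=e_2^2=-2e'$ are conjugate, first under $O(L)$ and then, after composing with $r_{e_1}$ if necessary exactly as in the glueing-vector lemma, under $O^+(L)$. From the preceding lemma I may already use that $[L:\langle e_i\rangle\oplus\langle e_i^\perp\rangle]=2$ and that $\det\langle e_i^\perp\rangle=-2d_e$ for $i=1,2$, where $d_e=d/e'$ is odd and square-free; thus $\langle e_1^\perp\rangle$ and $\langle e_2^\perp\rangle$ are even lattices of signature $(2,1)$ with the same square-free determinant.

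The first step is to show $\langle e_1^\perp\rangle\cong\langle e_2^\perp\rangle$. Over $\mathbb{Q}$ the vectors $e_1,e_2$ have equal norm $-2e'$ inside the fixed quadratic space $L\otimes\mathbb{Q}$, so Witt's theorem gives an isometry of $L\otimes\mathbb{Q}$ carrying $e_1$ to $e_2$; restricting it to orthogonal complements yields $\langle e_1^\perp\rangle\otimes\mathbb{Q}\cong\langle e_2^\perp\rangle\otimes\mathbb{Q}$. In particular the two complements have the same Hasse invariant (and, as already noted, the same signature and determinant). Since the determinant $-2d_e$ is square-free, Kneser's theorem forces the class of each to coincide with its genus, and Lemma 1 — which says that a fixed square-free discriminant together with a fixed Hasse invariant determines the class — then shows $\langle e_1^\perp\rangle$ and $\langle e_2^\perp\rangle$ are isometric.

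The second step is to match the glueing data so that this isometry extends to $L$. By the glueing-vector lemma the glueing vector for $\langle e_i\rangle\oplus\langle e_i^\perp\rangle$ is determined by an element of order $2$ in $disc(\langle e_i^\perp\rangle)$. Because $|disc(\langle e_i^\perp\rangle)|=2d_e$ is square-free, at each prime dividing it the corresponding part of the discriminant group is cyclic of prime order, so the whole group is cyclic of order $2d_e$ with $d_e$ odd; hence it contains a unique element of order $2$. Therefore the glueing element is forced, and any isometry $\langle e_1^\perp\rangle\to\langle e_2^\perp\rangle$ automatically sends the order-$2$ glueing element of the first to that of the second. Feeding this into the glueing-vector lemma (whose proof extends such an isometry to an automorphism of $L$ taking $e_1$ to $e_2$, via the extension lemma) shows that $e_1$ and $e_2$ are conjugate. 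I expect the main obstacle to be the rational step: pinning down that the complements are isometric over $\mathbb{Q}$, i.e. that they share a Hasse invariant, since only then does Lemma 1 apply — once that is in place, the uniqueness of the glueing element is a short consequence of square-freeness.
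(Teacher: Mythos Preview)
Your argument is correct and follows the same route as the paper: use Lemma~1 (after fixing the Hasse invariant, which you do via Witt's theorem and the paper leaves implicit) to identify the orthogonal complements, then invoke the uniqueness of the order-$2$ element in $disc(\langle e^\perp\rangle)$ together with Lemma~8(b). The only cosmetic difference is that the paper localizes, observing the glueing is trivial at odd primes and that over $\mathbb{Z}_2$ the discriminant group is $\mathbb{Z}/2\mathbb{Z}$, whereas you argue globally that the discriminant group is cyclic of square-free order $2d_e$; both yield the same unique order-$2$ element.
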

\begin{proof}
Let $x$ be a glueing vector for $\langle e \rangle \oplus \langle e^\perp \rangle$. It suffices to find $x$ locally because by Kneser's theorem the class of $L$ coincides with the genus. Over each $\mathbb{Z}_p,\ p\neq 2$, 2 is invertible, so $x\in \frac{1}{2}L\otimes \mathbb{Z}_p \  \Rightarrow x\in L\otimes \mathbb{Z}_p$. Over $\mathbb{Z}_2$ $disc(\langle e^\perp \rangle)\cong \mathbb{Z} / 2\mathbb{Z}$. It follows from lemma 8 that $x$ is uniquely defined by $pr_{e^\perp} x $, that corresponds to the unique element of order 2 in $disc(\langle e^\perp \rangle)\cong \mathbb{Z} / 2\mathbb{Z}$. So, there is no more than one glueing vector $x$. By lemma 1 the lattice $\langle e^\perp \rangle$ is unique in it's class. 
It follows from lemma 8.b) that $e$ is unique.
\end{proof}
Let $\{e_i\}$ be the basis for $L$.
\begin{lemma}
For $e^2=-2p_{i_1}\ldots p_{i_m}$ to be a root it is necessary that $\left( \frac{d_e}{p_{i_j}} \right)=1$ for each $p_{i_j},\ 1\leq j \leq m$. 
\end{lemma}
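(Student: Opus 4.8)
The plan is to work locally at each odd prime $p:=p_{i_j}$ dividing $e'=p_{i_1}\cdots p_{i_m}$, and to compare two Jordan decompositions of $L\otimes\mathbb{Z}_p$: one produced by the splitting along $e$, the other read directly off the explicit Gram matrix of $L$. Equating the rank-one $p$-modular blocks of the two decompositions will produce a square-class identity that simplifies to the desired Legendre-symbol condition.

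First I would record the relevant arithmetic. Since $d=e'd_e$ is square-free and $p\mid e'$, we have $p\nmid d_e$; writing $e'=p\,e''$ and $d=p\,\tilde d$ we get $\tilde d=e''d_e$ with $p\nmid e''$ and $p\nmid\tilde d$. By Lemma 9 the complement satisfies $\det\langle e^\perp\rangle=-2d_e$, which is a $p$-adic unit, so $\langle e^\perp\rangle\otimes\mathbb{Z}_p$ is unimodular of rank $3$. As $p$ is odd, the index $[L:\langle e\rangle\oplus\langle e^\perp\rangle]\in\{1,2\}$ from Lemma 5 is a unit in $\mathbb{Z}_p$, hence
$$L\otimes\mathbb{Z}_p=\langle -2e'\rangle\perp\big(\langle e^\perp\rangle\otimes\mathbb{Z}_p\big),$$
and this is a Jordan splitting whose $p$-modular part is the rank-one block $\langle -2e'\rangle=\langle p\cdot(-2e'')\rangle$.

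Next I would compute the $p$-modular part straight from $L=U\oplus\begin{pmatrix}2&1\\1&\frac{1-d}{2}\end{pmatrix}$. Over $\mathbb{Z}_p$ the summand $U$ is unimodular, and completing the square in the binary block (legitimate since $2$ is a unit mod $p$) gives $\begin{pmatrix}2&1\\1&\frac{1-d}{2}\end{pmatrix}\cong\langle 2\rangle\oplus\langle -d/2\rangle$, using $\det=-d$. Thus the $p$-modular part of $L\otimes\mathbb{Z}_p$ is $\langle -d/2\rangle=\langle p\cdot(-\tilde d/2)\rangle$. By uniqueness of the Jordan decomposition at the odd prime $p$, the two rank-one $p$-modular blocks must be isometric, so their unit parts agree up to squares: $-2e''\equiv -\tilde d/2 \pmod{(\mathbb{Z}_p^{\ast})^2}$.

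Finally I would translate this into Legendre symbols, $\left(\frac{-2e''}{p}\right)=\left(\frac{-\tilde d/2}{p}\right)$, cancel the common factors $\left(\frac{-1}{p}\right)\left(\frac{2}{p}\right)$ to get $\left(\frac{e''}{p}\right)=\left(\frac{\tilde d}{p}\right)$, and substitute $\tilde d=e''d_e$ to conclude $\left(\frac{d_e}{p}\right)=1$; running this over every $p_{i_j}$ gives the claim. The only delicate point I anticipate is the bookkeeping of square classes: one must check that the $p$-modular Jordan blocks are genuinely rank one (so that uniqueness pins down the unit up to squares, not merely the discriminant) and keep careful track of the factors of $2$ and $-1$. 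The completing-the-square identification of the $p$-modular block and the cancellation in the symbol computation are where all the arithmetic content lies.
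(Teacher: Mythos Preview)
Your proof is correct and takes a genuinely different route from the paper's. The paper argues by brute coordinates: it writes $e=(x,y,z,t)$ in the standard basis, reads off from $\frac{e}{e'}\in L^\vee$ that $e'\mid x$, $e'\mid y$, $e'\mid 2z+t$, substitutes into $e^2=-2e'$, and reduces the resulting identity
\[
4x'y'e'+e't'^2-d_e t^2=-4
\]
modulo each $p_{i_j}$ to force $\bigl(\tfrac{d_e}{p_{i_j}}\bigr)=1$. Your argument is structural instead: you localise at $p=p_{i_j}$, exploit that the index $[L:\langle e\rangle\oplus\langle e^\perp\rangle]$ is a $p$-adic unit to get one Jordan splitting of $L\otimes\mathbb{Z}_p$ with $p$-modular block $\langle -2e'\rangle$, diagonalise the explicit Gram matrix to get a second splitting with $p$-modular block $\langle -d/2\rangle$, and then invoke uniqueness of the Jordan decomposition at odd primes to match the unit square classes. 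The paper's approach is entirely elementary and self-contained, needing nothing beyond Lemma~5 and integer arithmetic; yours is cleaner and coordinate-free, and it makes transparent that the Legendre condition is exactly the local obstruction to matching the $p$-modular Jordan constituents---a viewpoint that would adapt immediately to other lattices in the paper without rewriting the coordinate algebra each time. One small remark: your appeal to Lemma~5 for the index could equally well cite Lemma~9, where the paper already shows the index is exactly $2$ in this $d\equiv 1\pmod 4$ case; either way the localisation step goes through.
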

\begin{proof}
Let $e=(x,\ y,\ z,\ t)$ be a root. We can rewrite this in coordinates: $e^2=-2p_{i_1}\ldots p_{i_m}$: $2xy+2z^2+2tz+\frac{1-d}{2}t^2=-2p_{i_1}\cdot \ldots \cdot p_{i_m}$. Besides, $\frac{e}{p_{i_1}\ldots p_{i_m}}\in L^V$. Taking the scalar product $(e,e_i)$, we get that $p_{i_1}\ldots p_{i_m} | x,$ $\ p_{i_1}\ldots p_{i_m}|y,$ $\ p_{i_1}\ldots p_{i_m}|2z+t$. Let $x=p_{i_1}\ldots p_{i_m} x'$, $y=p_{i_1}\ldots p_{i_m}y'$, $2z=p_{i_1}\ldots p_{i_m}t'-t$, substituting this to the first equation, we get:
$$ 2x'y'(p_{i_1}\ldots p_{i_m})^2+\frac{(p_{i_1}\ldots p_{i_m}t'-t)(p_{i_1}\ldots p_{i_m}t'+t)}{2}+\frac{1-d}{2}t^2=-2p_{i_1}\ldots p_{i_m}$$
$$2x'y'(p_{i_1}\ldots p_{i_m})^2+\frac{(p_{i_1}\ldots p_{i_m})^2t'^2-dt^2}{2}=-2p_{i_1}\ldots p_{i_m}$$
Multiplying by 2 and dividing by $p_{i_1}\ldots p_{i_m}$, we get:
$$4x'y'p_{i_1}\ldots p_{i_m}+p_{i_1}\ldots p_{i_m}t'^2-\frac{d}{p_{i_1}\ldots p_{i_m}}t^2=-4$$
$$4x'y'p_{i_1}\ldots p_{i_m}+p_{i_1}\ldots p_{i_m}t'^2-d_e t^2=-4$$

Considering the last equation mod each $p_{i_j}$, we get that $d_e t^2 = 4 \pmod{p_{i_j}}$ must be solvable, i.e. $\left( \frac{d_e}{p_{i_j}} \right)=1$.
\end{proof}

Let $d$ be a prime number ($d=p$). We can find all the roots explicitly in this case:
\begin{lemma}
Let $e$ be a primitive root of $L$. There are two possibilities:\\
a) $e^2=-2$, $\langle e^\perp \rangle= U\oplus \langle 2p \rangle$.\\
b)$e^2=-2p$, $\langle e^\perp \rangle=U\oplus \langle 2 \rangle$.
\end{lemma}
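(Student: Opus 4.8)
The plan is to exploit the fact that $d=p$ is prime to reduce the list of admissible root norms to exactly two, and then to pin down $\langle e^\perp\rangle$ by computing it for a single convenient root of each norm. By the discussion preceding Lemma 9, every root satisfies $e^2=-2p_{i_1}\cdots p_{i_m}$ with $\{i_1,\dots,i_m\}\subseteq\{1,\dots,k\}$; since $k=1$ the only possibilities are the empty subset, giving $e^2=-2$, and the full subset, giving $e^2=-2p$. This already produces the dichotomy (a)/(b). By Lemma 10 a primitive $-2e'$-root is unique up to conjugation, and conjugate roots have isometric orthogonal complements, so it suffices to treat one representative of each norm; I would then finish each case using the determinant formula of Lemma 9, namely $\det\langle e^\perp\rangle=-2p$ in case (a) and $-2$ in case (b).

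For case (b) I would take the vector $e=-e_3+2e_4$ lying in the $K$-summand. A direct computation gives $e^2=-2p$ and $(e,L)=p\mathbb{Z}$, so $e/p\in L^V$, confirming that $e$ is a primitive $-2p$-root (the condition $\bigl(\tfrac{d_e}{p}\bigr)=1$ of Lemma 11 holds trivially since $d_e=1$). Because $e\in K$, the whole hyperbolic plane $U$ is orthogonal to it, and one checks $(e_3,e)=0$ as well; hence $U\oplus\langle e_3\rangle=U\oplus\langle 2\rangle$ is an even, signature-$(2,1)$ sublattice of $\langle e^\perp\rangle$ of full rank $3$ and determinant $-2$. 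Since Lemma 9 gives $\det\langle e^\perp\rangle=-2$, the sublattice has index $1$, so $\langle e^\perp\rangle=U\oplus\langle 2\rangle$.

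For case (a) I would take $e=e_1-e_2\in U$, so $e^2=-2$ and the reflection $r_e(x)=x+(x,e)e$ is automatically integral. Here $e^\perp\cap L=\langle e_1+e_2\rangle\oplus K=\langle 2\rangle\oplus K$, of determinant $-2p$ and signature $(2,1)$. To recognise this as $U\oplus\langle 2p\rangle$ I would split off a hyperbolic plane: writing $f=e_1+e_2$ and looking for an isotropic vector $af+be_3+ce_4$, the choice $c=2$ collapses the equation to $a^2+(b+1)^2=p$, which is solvable because $d=p\equiv 1\pmod 4$ (Fermat's two-square theorem). After normalising so that the odd summand is the coefficient $a$ of $f$, a short computation of $(v,f),(v,e_3),(v,e_4)$ gives $(v,\langle e^\perp\rangle)=\mathbb{Z}$ by a gcd/parity check; hence $v$ is primitive and, correcting a dual vector to be isotropic using evenness, spans a hyperbolic plane $U$ that splits off as an orthogonal summand. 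The remaining rank-$1$ summand is positive definite, even, of determinant $2p$, i.e.\ $\langle 2p\rangle$, yielding $\langle e^\perp\rangle=U\oplus\langle 2p\rangle$.

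The main obstacle is case (a): unlike case (b), the form $K$ is anisotropic over $\mathbb{Q}$ (its discriminant $-p$ is not a negative square), so there is no norm-$(-2)$ vector inside $K$ and one cannot simply keep a visible $U$; a new hyperbolic plane must be manufactured from the $\langle 2\rangle$-part together with $K$, which is exactly where the representation $p=a^2+b^2$ and the subsequent verification that $(v,\langle e^\perp\rangle)=\mathbb{Z}$ enter. As a cleaner alternative to the explicit splitting, one could instead invoke Lemma 1: $\langle e^\perp\rangle$ is even of signature $(2,1)$ with square-free discriminant, hence determined by its discriminant and Hasse invariant, so it would be enough to match the Hasse invariant of $\langle 2\rangle\oplus K$ with that of $U\oplus\langle 2p\rangle$.
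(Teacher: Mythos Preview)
Your argument is correct, but it follows a different route from the paper. The paper handles both cases uniformly via Lemma~1: it picks the same representative roots $e_1-e_2$ and $2e_4-e_3$, but instead of exhibiting $\langle e^\perp\rangle$ explicitly it computes the Hasse invariants of $\langle e^\perp\rangle$ (using $\epsilon_p(L)=\epsilon_p(\langle e\rangle)\,\epsilon_p(\langle e^\perp\rangle)\,(e^2,\det\langle e^\perp\rangle)_p$) and matches them against those of $U\oplus\langle 2p\rangle$ and $U\oplus\langle 2\rangle$ respectively; Lemma~1 then forces isometry. Your treatment of case~(b) is more elementary---you actually write down the orthogonal complement $U\oplus\langle e_3\rangle$ and use the determinant from Lemma~9 to see the index is $1$---while in case~(a) you manufacture a hyperbolic plane inside $\langle 2\rangle\oplus K$ via Fermat's two-square theorem, which is a genuinely constructive alternative that the paper avoids. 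The cleaner alternative you mention at the very end (comparing Hasse invariants of $\langle 2\rangle\oplus K$ and $U\oplus\langle 2p\rangle$ and invoking Lemma~1) is precisely the paper's argument; it buys uniformity and brevity, whereas your explicit splitting avoids any appeal to local invariants but costs the Fermat computation and the gcd check that $(v,\langle e^\perp\rangle)=\mathbb{Z}$.
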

\begin{proof}
a) Let $e=e_1-e_2$. $e^2=-2,\ \frac{2e}{-2}\in L \subseteq L^V$. The Hasse invariants of lattices $\langle e^\perp \rangle$ and $U\oplus \langle 2p \rangle$ for each prime, except $p$ and $2$ are equal to 1, because the lattices are unimodular. It follows from the proof of lemma 1 that over $\mathbb{Z}_2$ the lattice is uniquely defined. We compute the Hasse invariants over $\mathbb{Z}_p$: $$\epsilon_p(U\oplus \langle 2p \rangle)=(-1,\ 2p)_p=\left( \frac{-1}{p}\right)=1,\ \text{ since }\ p=1\pmod{4}$$ $$\left( \frac{-2}{p}\right)=\epsilon_p(L)=\epsilon_p(\langle e\rangle\oplus\langle e^\perp\rangle)=(-2,-2p)_p\epsilon_p(\langle e^\perp \rangle)=\left( \frac{-2}{p}\right)\epsilon_p(\langle e^\perp\rangle).$$ So, $\epsilon_p(\langle e^\perp\rangle)=1$, i.e. the Hasse invariants are equal for all primes. By lemma 1 $\langle e^\perp \rangle = U\oplus \langle 2p \rangle$.

b) Let $e=2e_4-e_3$. $e^2=4\cdot \frac{1-p}{2}-2\cdot 1 - 2\cdot 1 + 2 = -2p$. $\frac{2(2e_4-e_3)}{-2p}\in L^V$, because $(\frac{2e_4-e_3}{p},e_3)=\frac{2-2}{p}=0,\ (\frac{2e_4-e_3}{p},e_4)=\frac{1-p-1}{p}=-1$. The lattices $\langle e^\perp \rangle$ and $U\oplus \langle 2 \rangle$ have the same determinants and Hasse invariants (for $p\neq 2$ they are unimodular, i.e. $\epsilon_p=1$, for $p=2$ the lattie is uniquely defined). Thus, they are equivalent by lemma 1.
\end{proof}

By lemma 10 these roots are unique.

\subsection*{Reflections, $d=3 \pmod{4}$}
The invariant factors are $1,\ 1,\ 2,\ 2d$. Each root has an even length, dividing $4d$ (doubled maximal invariant factor). That's why a priori $-2p_{i_1}\ldots p_{i_m}$ and $-4p_{i_1}\ldots p_{i_m}$ reflections can exist, where $\lbrace i_1,\ldots i_m \rbrace \subseteq \lbrace 1,\ldots,k \rbrace$ (including the empty set).

\begin{lemma} Let $e$ be a primitive root of $L$, then one of the following statements is true:\\
a) $e^2=-4p_{i_1}\ldots p_{i_m}=:-4e'$,
$det(\langle e^\perp\rangle) = \frac{-4d}{p_{i_1}\ldots p_{i_m}}=:-4d_e$.\\
b) $e^2=-2p_{i_1}\ldots p_{i_m}=:-2e'$,
$det(\langle e^\perp \rangle) = \frac{-2d}{p_{i_1}\ldots p_{i_m}}=:-2d_e$.\\
c)$e^2=-2p_{i_1}\ldots p_{i_m}=:-2e'$, $det(\langle e^\perp \rangle) = \frac{-8d}{p_{i_1}\ldots p_{i_m}}=:-8d_e$.
\end{lemma}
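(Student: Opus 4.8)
The plan is to combine the divisibility constraint on root lengths already obtained in the preceding discussion with the index dichotomy of Lemma 5 and a single determinant count, and then to discard the one spurious combination by a $2$-adic parity argument. Recall that a primitive root $e$ of $L$ satisfies either $e^2=-2e'$ or $e^2=-4e'$, with $e'=p_{i_1}\ldots p_{i_m}$. Since the invariant factors of $L$ are $1,1,2,2d$, we have $\det L=4d$; and since $L$ has signature $(2,2)$ while $\langle e^\perp\rangle$ has signature $(2,1)$, the determinant of $\langle e^\perp\rangle$ is negative. By Lemma 5 the sublattice $\langle e\rangle\oplus\langle e^\perp\rangle$ has index $1$ or $2$ in $L$, and in either case
\begin{equation*}
e^2\cdot\det\langle e^\perp\rangle=[L:\langle e\rangle\oplus\langle e^\perp\rangle]^2\cdot\det L=[L:\langle e\rangle\oplus\langle e^\perp\rangle]^2\cdot 4d.
\end{equation*}

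This leaves four a priori combinations of root length and index. Substituting and solving for $\det\langle e^\perp\rangle$ in each, I would read off: for $e^2=-2e'$ the two indices give $\det\langle e^\perp\rangle=-2d_e$ and $-8d_e$, which are exactly cases (b) and (c); for $e^2=-4e'$ the index $2$ gives $\det\langle e^\perp\rangle=-4d_e$, which is case (a); and the remaining combination, $e^2=-4e'$ together with index $1$, would give $\det\langle e^\perp\rangle=-d_e$ with $d_e=d/e'$ odd. The hard part will be showing that this last combination cannot occur — the determinant bookkeeping alone does not exclude it, so an extra structural obstruction is needed.

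To rule it out, I would suppose $L=\langle e\rangle\oplus\langle e^\perp\rangle$ orthogonally with $e^2=-4e'$, so that $\langle e^\perp\rangle$ is a ternary lattice of determinant $\pm d_e$, an odd number. Being a sublattice of the even lattice $L$, the lattice $\langle e^\perp\rangle$ is itself even; and since its determinant is prime to $2$, the lattice $\langle e^\perp\rangle\otimes\mathbb{Z}_2$ is even and unimodular. But an even unimodular lattice over $\mathbb{Z}_2$ is an orthogonal sum of the rank-$2$ forms $\begin{pmatrix}0&1\\1&0\end{pmatrix}$ and $\begin{pmatrix}2&1\\1&2\end{pmatrix}$, hence has even rank, contradicting $\operatorname{rank}\langle e^\perp\rangle=3$. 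Equivalently, such a splitting would force the $2$-adic invariant factors of $L$ to be $1,1,1,4$, whereas $L\otimes\mathbb{Z}_2=U\oplus\langle-2\rangle\oplus\langle2d\rangle$ has $2$-adic invariant factors $1,1,2,2$. This excludes the fourth combination and leaves exactly the three possibilities (a), (b), (c) of the statement, completing the proof.
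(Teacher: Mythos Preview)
Your proof is correct and follows essentially the same route as the paper: apply the index dichotomy of Lemma~5, read off the four possible determinants from $e^2\cdot\det\langle e^\perp\rangle=[L:\langle e\rangle\oplus\langle e^\perp\rangle]^2\cdot 4d$, and eliminate the case $e^2=-4e'$ with index~$1$ because it would make $\langle e^\perp\rangle$ an even rank-$3$ lattice of odd determinant. The paper states this last obstruction in one line (``contradicts that $\langle e^\perp\rangle$ is even of rank~$3$''), while you spell it out via the classification of even unimodular $\mathbb{Z}_2$-lattices and the mismatch of $2$-adic invariant factors; both are the same argument.
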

\begin{proof}
The statements b), c) correspond to the cases from lemma 5.a) and 5.b) respectively for $e^2=-2p_{i_1}\ldots p_{i_m}$.\\
For $e^2=-4p_{i_1}\ldots p_{i_m}$ the case a) from lemma 5 is impossible: it follows from the equality $e^2 \cdot det(\langle e^\perp \rangle)=4d$ that $det(\langle e^\perp \rangle)$ is odd, but it contradicts that $\langle e^\perp \rangle$ is even lattice of rank 3. Thus, $[L:\langle e \rangle\oplus\langle e^\perp \rangle]=2$. It corresponds to the case $det(\langle e^\perp \rangle) = \frac{-8d}{p_{i_1}\ldots p_{i_m}}$.
\end{proof}

\begin{lemma}
Let $e$ be a primitive root of $L$ such that $e^2=-2e',\ det(\langle e^\perp\rangle)=-8d_e$, then: \\
a) $\epsilon_2(\langle e^\perp\rangle)=-1$\\
b) The invariant factors of $\langle e^\perp\rangle$ are $2,\ 2,\ 2d_e$\\
c) If $e'=3 \pmod{4}$, then over $\mathbb{Z}_2$ $\langle e^\perp \rangle \sim \begin{pmatrix}
4 & 2  \\
2 & 4 \\
\end{pmatrix} \oplus \langle -6d_e \rangle$.
\end{lemma}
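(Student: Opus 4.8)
Recall that in the situation of the lemma (case c) of the preceding lemma) we have $[L:\langle e\rangle\oplus\langle e^\perp\rangle]=2$, $e^2=-2e'$ with $e'=p_{i_1}\ldots p_{i_m}$ odd and squarefree, $\det\langle e^\perp\rangle=-8d_e$, and $\langle e^\perp\rangle$ is even of signature $(2,1)$. All three assertions are statements about $\langle e^\perp\rangle\otimes\mathbb{Z}_2$, so the plan is to localise and first dispose of the odd primes, where the gluing is trivial. For an odd $p\mid d_e$ the vector $e$ has unit norm, so $\langle e\rangle\otimes\mathbb{Z}_p$ is unimodular and $L\otimes\mathbb{Z}_p=(\langle e\rangle\oplus\langle e^\perp\rangle)\otimes\mathbb{Z}_p$; comparing with the invariant factors $1,1,1,p$ of $L$ at $p$ forces those of $\langle e^\perp\rangle$ to be $1,1,p$. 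For $p\mid e'$ the same splitting makes $\langle e^\perp\rangle$ unimodular at $p$. Hence the odd contribution to the top invariant factor is exactly $d_e$, and everything reduces to $p=2$.

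For b) the decisive observation is that, by lemma 5, the index being $2$ is equivalent to $(e,L)=e'\mathbb{Z}$; since $e'$ is odd this gives $(e,L\otimes\mathbb{Z}_2)=\mathbb{Z}_2$. So I would choose $w\in L\otimes\mathbb{Z}_2$ with $(e,w)=1$ and set $P=\langle e,w\rangle$, a unimodular plane, giving $L\otimes\mathbb{Z}_2=P\perp P^\perp$. The model $L\otimes\mathbb{Z}_2=U\oplus\langle-2\rangle\oplus\langle 2d\rangle$ has unimodular (scale-$0$) Jordan part of rank exactly $2$; as the ranks of the Jordan constituents are invariants and $P$ already exhausts the scale-$0$ part, $P^\perp$ must be $2$-modular of rank $2$ (its determinant has valuation $2$, ruling out higher scales). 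Inside $P$ one computes $e^\perp\cap P=\mathbb{Z}_2(e+2e'w)$ with $(e+2e'w,e+2e'w)=2e'\bigl(1+2e'(w,w)\bigr)$ of valuation $1$, i.e. $2$-modular of rank $1$. Therefore $\langle e^\perp\rangle\otimes\mathbb{Z}_2=(e^\perp\cap P)\perp P^\perp$ is $2$-modular of rank $3$, all of its invariant factors have $2$-part equal to $2$, and together with the odd primes this yields the invariant factors $2,2,2d_e$ of b).

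For a) I would use the product formula for the Hasse invariant. Since $\langle e\rangle\oplus\langle e^\perp\rangle$ has finite index in $L$, it is $\mathbb{Q}_2$-equivalent to $L$, so $\epsilon_2(L)=\epsilon_2(\langle e\rangle)\,\epsilon_2(\langle e^\perp\rangle)\,(e^2,\det\langle e^\perp\rangle)_2$; as $\epsilon_2(\langle e\rangle)=1$ this rearranges to $\epsilon_2(\langle e^\perp\rangle)=\epsilon_2(L)\,(-2e',-8d_e)_2$. A short computation from $L=U\oplus\langle-2\rangle\oplus\langle 2d\rangle$ gives $\epsilon_2(L)=(-2,2d)_2$. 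Expanding both symbols by the standard $2$-adic formula and using multiplicativity together with $d=e'd_e$, the two exponents collapse and one finds $\epsilon_2(\langle e^\perp\rangle)=-1$ unless $e'\equiv d_e\equiv 3\pmod 4$. That case is excluded, because $e'd_e=d\equiv 3\pmod 4$ cannot be a product of two numbers both $\equiv 3\pmod 4$; hence $\epsilon_2(\langle e^\perp\rangle)=-1$, which is a).

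Finally c) is formal: from $d=e'd_e\equiv 3\pmod 4$ and the hypothesis $e'\equiv 3\pmod 4$ we get $d_e\equiv 1\pmod 4$. By a) and b), $\langle e^\perp\rangle$ is an even $(2,1)$-lattice with $\det=-8d_e$, invariant factors $2,2,2d_e$ and $\epsilon_2=-1$, so lemma 3 applies; and since for $d_e\equiv 1\pmod 4$ it is precisely $\begin{pmatrix}4 & 2\\ 2 & 4\end{pmatrix}\oplus\langle-6d_e\rangle$ that has $\epsilon_2=-1$, this identifies $\langle e^\perp\rangle\otimes\mathbb{Z}_2$ as claimed. The step I expect to require the most care is the $2$-adic structure theory underlying b), namely justifying via invariance of the Jordan ranks that $P^\perp$ is \emph{exactly} $2$-modular, with no scale-$0$ or higher-scale summand; by contrast the Hilbert-symbol computation in a) is mechanical and, pleasantly, collapses.
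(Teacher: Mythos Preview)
Your proof is correct. Parts a) and c) proceed essentially as in the paper: both use $\epsilon_2(L)=\epsilon_2(\langle e^\perp\rangle)\cdot(-2e',-8d_e)_2$ and a case check, and both reduce c) to Lemma~3 via $d_e\equiv 1\pmod 4$. Your closed form $\epsilon_2(\langle e^\perp\rangle)=(-1)^{1+\epsilon(e')\epsilon(d_e)}$ is a pleasant sharpening of the paper's ``going through all the options mod $8$''.

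Part b) is where you genuinely diverge. The paper argues by elimination: the only candidate invariant-factor tuples are $(2,2,2d_e)$, $(1,2,4d_e)$, $(1,1,8d_e)$; the second is excluded because an even lattice cannot have an odd unimodular rank-$1$ summand; the third is excluded by showing the unique order-$2$ element of $\mathrm{disc}\cong\mathbb{Z}/8\mathbb{Z}$ would give a glueing vector of non-integral norm. You instead give a direct structural argument: from $(e,L)=e'\mathbb{Z}$ with $e'$ odd you produce a unimodular plane $P\ni e$ in $L\otimes\mathbb{Z}_2$, split $L\otimes\mathbb{Z}_2=P\perp P^\perp$, use additivity and invariance of Jordan ranks (equivalently, of the Smith normal form) to force $P^\perp$ to be $2$-modular of rank $2$, and compute that $e^\perp\cap P$ is $2$-modular of rank $1$. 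Your flagged concern is well placed but harmless: the key point is that the Jordan ranks equal the multiplicities of the $2$-adic valuations among the invariant factors, which for $L$ are $1,1,2,2$, so the scale-$0$ rank is exactly $2$ and $P$ saturates it; this simultaneously rules out any scale-$0$ or scale-$\ge 2$ piece in $P^\perp$ (the parenthetical ``determinant valuation $2$'' alone would not exclude $(1,4)$, but the Jordan-rank argument already does). The paper's route is shorter and reuses the glueing machinery of Lemma~8; yours is more self-contained and explains \emph{why} the invariant factors come out as they do, rather than merely excluding alternatives.
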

\begin{proof}
a) $\epsilon_2(L)=-(2,2d)=\begin{cases} 
1,&\text{if $d=3 \pmod{8}$;}\\
-1,&\text{if $d=-1 \pmod{8}$;}
\end{cases}$. On the other hand, $\epsilon_2(L)=\epsilon_2(\langle e \rangle \oplus \langle e^\perp\rangle)=\epsilon_2(\langle e^\perp \rangle)\cdot (-8d_e,-2e')_2$. Going through all the options on $d_e, \ e' \pmod{8}$, we obtain the statement.\\

b) Assume the invariant factors of $\langle e^\perp \rangle$ are not $2,\ 2,\ 2d_e$, then they must be $1,\ 2,\ 4d_e$ or $1,\ 1,\ 8d_e$. But the first possibility is impossible $1,\ 2,\ 4d_e$, because the lattice $\langle e^\perp \rangle$ is even, and so it can't have an unimodular component of rank 1. Assume the invariant factors are $1,\ 1,\ 8d_e$. Let $x$ be the glueing vector for $\langle e\rangle\oplus\langle e^\perp \rangle$. It suffices to find $x$ locally because the class of $L$ coincides with the genus by Kneser's theorem. Over each $\mathbb{Z}_p,\ p\neq 2$, 2 is invertible, so $x\in \frac{1}{2}L\otimes \mathbb{Z}_p \  \Rightarrow x\in L\otimes \mathbb{Z}_p$. Over $\mathbb{Z}_2$ $disc(\langle e^\perp \rangle)\cong \mathbb{Z} / 8\mathbb{Z}$. It follows from lemma 8 that $x$ is uniquely defined by $pr_{e^\perp} x $, that corresponds to the unique element of order 2 in $disc(\langle e^\perp \rangle)\cong \mathbb{Z} / 8\mathbb{Z}$. So, there is no more than one glueing vector $x$. But $x^2=\left( \frac{1}{2}e \right)^2 + ( pr_{e^\perp}x )^2 = \frac{-2e'}{4} + \frac{1}{4}\cdot 8d_e = \frac{e'}{2} \neq 0 \pmod{\mathbb{Z}}$, thus $x$ is not a glueing vector. So there is no glueing vector and this case is impossible.\\

c) It follows from b) that we can apply lemma 3 for $\langle e^\perp \rangle$. We notice that $e'=3 \pmod{4} \Rightarrow d_e=1 \pmod{4}$, because $e'\cdot d_e=d=3\pmod 4$. Taking into account that $\epsilon_2(\langle e^\perp\rangle)=-1$, the required statement follows from the proof of lemma 3.
\end{proof}

It follows from lemmas 1, 2, 3, 14 that in every case listed in lemma 13, the class of $\langle e^\perp \rangle$ coincides with the genus.
 
\begin{lemma}
a) If $e$ is a primitive $-4p_{i_1}\ldots p_{i_m}$ root of $L$, then it is unique up to conjugation.\\
b) There can be at most two primitive $-2p_{i_1}\ldots p_{i_m}$ roots $e_1,\ e_2$ in $L$. In this case  $det(\langle e_1^\perp \rangle)=-2d_{e_1}$, $det(\langle e_2^\perp \rangle)=-8d_{e_2}$. 
\end{lemma}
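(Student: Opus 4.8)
The plan is to reduce both statements to a count of gluing vectors, as in the proof of Lemma 10. By the remark preceding the lemma, in every case of Lemma 13 the class of $\langle e^\perp\rangle$ coincides with its genus, so Lemma 8.b applies and the conjugacy classes of primitive roots of a fixed type (fixed $e^2$ together with a fixed class of $\langle e^\perp\rangle$) are in bijection with the gluing vectors modulo $\langle e\rangle\oplus\langle e^\perp\rangle$; by Lemma 8.a these are certain elements of order $2$ in $disc(\langle e^\perp\rangle)$. Writing a gluing vector as $x=\tfrac{e}{2}+y$ with $y=pr_{e^\perp}x$ of order $2$, the evenness of $L$ forces $x^2=\tfrac{e^2}{4}+y^2\in 2\mathbb{Z}$, i.e. $y^2\equiv-\tfrac{e^2}{4}\pmod{2\mathbb{Z}}$; I call an order-$2$ element admissible when it satisfies this congruence, and a gluing vector is exactly an admissible element. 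The whole proof is then a count of admissible elements in each case.

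For part (a), Lemma 13 forces a primitive $-4e'$ root into the index-$2$ case, with $\det(\langle e^\perp\rangle)=-4d_e$ and invariant factors $1,1,4d_e$, so $disc(\langle e^\perp\rangle)\cong\mathbb{Z}/4d_e\mathbb{Z}$. Since $d_e$ is odd, this cyclic group has a single element of order $2$; hence there is at most one gluing vector, and Lemma 8.b yields uniqueness up to conjugation. A quick check on the $\mathbb{Z}_2$-model of Lemma 2 shows that element has square $\equiv d_e\equiv e'\pmod{2\mathbb{Z}}$ and is admissible, although admissibility is automatic as soon as one such root is known to exist.

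For part (b), Lemma 13 splits the $-2e'$ roots into case (b) ($L=\langle e\rangle\oplus\langle e^\perp\rangle$, $\det(\langle e^\perp\rangle)=-2d_e$) and case (c) (index $2$, $\det(\langle e^\perp\rangle)=-8d_e$). As $-2d_e\neq-8d_e$, the lattices $\langle e_1^\perp\rangle$ and $\langle e_2^\perp\rangle$ are non-isometric and roots from the two cases are never conjugate, so it suffices to bound each case by one class. In case (b) the lattice splits, there is no nontrivial gluing, and $e$ is recovered up to sign as the orthogonal complement of $\langle e^\perp\rangle$; uniqueness follows from the coincidence of class and genus (Lemma 1) just as in Lemma 10. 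In case (c), Lemma 14 gives invariant factors $2,2,2d_e$, so $disc(\langle e^\perp\rangle)\cong(\mathbb{Z}/2\mathbb{Z})^3\oplus\mathbb{Z}/d_e\mathbb{Z}$ and the seven order-$2$ elements all lie on the $2$-part. Computing the discriminant quadratic form from the $\mathbb{Z}_2$-model of Lemma 3 and imposing admissibility $y^2\equiv\tfrac{e'}{2}\pmod{2\mathbb{Z}}$, the three elements of integral square and the three of square $\tfrac{d_e}{2}$ are discarded, leaving only the single element of square $1+\tfrac{d_e}{2}$; since $e'd_e=d\equiv3\pmod4$ one verifies $1+\tfrac{d_e}{2}\equiv\tfrac{e'}{2}\pmod{2\mathbb{Z}}$, so there is exactly one admissible gluing vector and one class. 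Combined with the non-conjugacy of the two cases, this gives at most two $-2e'$ roots, with the stated discriminants.

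The step I expect to be the main obstacle is this last $2$-adic computation in case (c): one must select the correct $\mathbb{Z}_2$-model according to $e'\pmod4$ (Lemma 14.c), write the discriminant quadratic form explicitly, and follow the half-integral square values across all seven order-$2$ elements to confirm that precisely one is admissible. The oddity and sign-walking bookkeeping from the proofs of Lemmas 3 and 14 is what keeps this manageable. Part (a) and case (b) are by comparison routine, since there the discriminant group either has a unique element of order $2$ or the lattice splits off $\langle e\rangle$ outright.
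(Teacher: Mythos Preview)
Your plan is the paper's own proof: reduce via Lemma 8 to counting glueing vectors, use Lemmas 1--3 and 14 for the $\mathbb{Z}_2$-model of $\langle e^\perp\rangle$, and filter order-$2$ discriminant classes by the evenness condition $x^2\in 2\mathbb{Z}$. Part (a) and the split case of (b) match the paper verbatim.

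One caution in case (c). Your explicit tally (three of integral square, three of square $\tfrac{d_e}{2}$, one of square $1+\tfrac{d_e}{2}$) is the $U(2)\oplus\langle 2d_e\rangle$ computation, valid when $d_e\equiv 3\pmod 4$ (equivalently $e'\equiv 1$). In the other model $\begin{pmatrix}4&2\\2&4\end{pmatrix}\oplus\langle-6d_e\rangle$, which Lemma 14.c forces when $d_e\equiv 1$ and $e'\equiv 3$, the rank-$2$ block has discriminant form taking value $1\pmod 2$ on all three nonzero classes, so the seven squares modulo $2\mathbb{Z}$ are $1,1,1,\ \tfrac{-3d_e}{2}$, and three copies of $1+\tfrac{-3d_e}{2}$. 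With $d_e\equiv 1$ one has $\tfrac{-3d_e}{2}\equiv\tfrac12$ and $1+\tfrac{-3d_e}{2}\equiv\tfrac32$, while admissibility demands $y^2\equiv\tfrac{e'}{2}\equiv\tfrac32$; hence the evenness filter leaves \emph{three} classes, not one. The paper's proof is equally brief at this point (it simply names one glueing vector per case and asserts ``at most one''), so you are not departing from its line of argument; but the step you correctly flagged as the main obstacle is more delicate than your outline anticipates. For $e'\equiv 3\pmod 4$ you will need something beyond the parity of $x^2$ --- either checking which of the candidate overlattices is actually isometric to $L$ over $\mathbb{Z}_2$, or passing to $O(\langle e^\perp\rangle)$-orbits of glueing data --- to reduce to a single class.
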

\begin{proof}
a) Let $x$ be the glueing vector for $\langle e \rangle\oplus\langle e^\perp\rangle$. It suffices to find $x$ locally because the class of $L$ coincides with the genus by Kneser's theorem. Over each $\mathbb{Z}_p,\ p\neq 2$, 2 is invertible, so $x\in \frac{1}{2}L\otimes \mathbb{Z}_p \  \Rightarrow x\in L\otimes \mathbb{Z}_p$. Over $\mathbb{Z}_2$ $disc(\langle e^\perp \rangle)\cong \mathbb{Z} / 4\mathbb{Z}$. It follows from lemma 8 that $x$ is uniquely defined by $pr_{e^\perp} x $, that corresponds to the unique element of order 2 in $disc(\langle e^\perp \rangle)\cong \mathbb{Z} / 4\mathbb{Z}$. So
there is no more than one glueing vector $x$. By lemma 2 the lattice $\langle e^\perp \rangle$ is unique in it's class. It follows from lemma 8.b) that $e$ is unique up to conjugation.

b) The uniqueness of $-2p_{i_1}\ldots p_{i_m}$ root $e_1$ such that $det(\langle e_1^\perp\rangle)=-2d_{e_1}$ follows from lemma 1 and the fact that $e_1$ splits off as a direct summand.\\
Let $x$ be a glueing vector for $\langle e_2 \rangle\oplus \langle e_2^\perp \rangle$. It suffices to find $x$ locally because the class of $L$ coincides with the genus by Kneser's theorem. Over each $\mathbb{Z}_p,\ p\neq 2$, 2 is invertible, so $x\in \frac{1}{2}L\otimes \mathbb{Z}_p \  \Rightarrow x\in L\otimes \mathbb{Z}_p$. Over $\mathbb{Z}_2$ $disc(\langle e_2^\perp \rangle)\cong \left( \mathbb{Z} / 2\mathbb{Z}\right)^3$, because by lemma 14.b) the invariant factors are $2,\ 2,\ 2d_e$. It means that there are 7 elements of order 2 in $disc(\langle e^\perp \rangle)$. From the proof of lemma 3 we know the explicit form of $\langle e_2^\perp\rangle$: $d_e=-1 \pmod{4} \Rightarrow \ \langle e^\perp \rangle \sim U(2)\oplus \langle 2d_e \rangle$, $d_e=1 \pmod{4} \Rightarrow \langle e^\perp\rangle\sim\begin{pmatrix}
4 & 2  \\
2 & 4 \\
\end{pmatrix} \oplus \langle -6d_e \rangle$. The length of the glueing vectors must be even so going through all the possibilities we get that there is at most one glueing vector: $d_e=-1 \pmod{4} \Rightarrow x=(1,1,1)\in \left( \mathbb{Z} / 2\mathbb{Z}\right)^3 $, $d_e=1 \pmod{4} \Rightarrow x=(0,0,1)\in \left( \mathbb{Z} / 2\mathbb{Z}\right)^3 $. 
By lemma 3 the lattice $\langle e^\perp \rangle$ is unique in it's class. It follows from lemma 8.b) that $e$ is unique up to conjugation.
\end{proof}

\begin{lemma}
$e=-2e'$ is a primitive root of $L$, $det \langle e^\perp\rangle=-2d_e$ $\Rightarrow$ $e'=1 \pmod{4}$
\end{lemma}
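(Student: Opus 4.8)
The plan is to exploit that in the situation of the lemma the lattice splits as an \emph{honest} orthogonal direct sum $L=\langle e\rangle\oplus\langle e^\perp\rangle$ over $\mathbb{Z}$ (this is exactly case~b) of Lemma~13, i.e. case~a) of Lemma~5, since $\det\langle e^\perp\rangle=-2d_e$), and to read off the congruence on $e'$ from the multiplicativity of the $2$-adic Hasse invariant under orthogonal sums. Because the splitting holds over $\mathbb{Z}$, hence over $\mathbb{Q}_2$, and $\langle e\rangle$ has rank one (so $\epsilon_2(\langle e\rangle)=1$), I would start from
\begin{equation}
\epsilon_2(L)=\epsilon_2(\langle e^\perp\rangle)\cdot(-2e',\,-2d_e)_2 .
\end{equation}
The left-hand side is already known: $\epsilon_2(L)=-(2,2d)_2$ by the computation in Lemma~14.a). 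Writing $U\sim\langle 1,-1\rangle$ over $\mathbb{Q}_2$ also gives a clean evaluation of the determinant Hilbert symbol in terms of $(-1,\cdot)_2$ and $(2,\cdot)_2$, using $e'd_e=d$.

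The key step is to pin down $\epsilon_2(\langle e^\perp\rangle)$. Here I would observe that $\langle e^\perp\rangle\otimes\mathbb{Z}_2$ is an even ternary lattice of determinant $-2d_e$ with $d_e$ odd, so its $2$-adic Jordan decomposition has scale exponents $0$ and $1$; the scale-$0$ part is even unimodular, hence of even rank, which forces it to have rank two and the scale-$2$ part to have rank one. Consequently $\langle e^\perp\rangle\otimes\mathbb{Z}_2\cong U\oplus\langle 2d_e\rangle$, the competing form $V\oplus\langle -6d_e\rangle$ being equivalent to it by the sign-walking already used in Lemma~1. Diagonalising again yields $\epsilon_2(\langle e^\perp\rangle)=(-1,2d_e)_2=(-1,d_e)_2$, i.e. $+1$ iff $d_e\equiv 1\pmod 4$. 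This is the crucial point: evenness together with the determinant makes the complement unique over $\mathbb{Z}_2$, so its Hasse invariant is \emph{determined}, and the consistency equation $(2)$ then becomes a genuine constraint rather than an identity.

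Substituting both values into $(2)$ and cancelling the common factors $-(2,e')_2(2,d_e)_2$ from the two sides should reduce everything to
$$1=(-1,e')_2\cdot(e',d_e)_2 .$$
Writing $(-1,e')_2=(-1)^{(e'-1)/2}$ and $(e',d_e)_2=(-1)^{\frac{e'-1}{2}\frac{d_e-1}{2}}$, the right-hand side equals $(-1)^{\frac{e'-1}{2}\cdot\frac{d_e+1}{2}}$; since $e'd_e=d\equiv 3\pmod 4$, exactly one of $e',d_e$ is $\equiv 3\pmod 4$, and a short case check shows the exponent is odd precisely when $e'\equiv 3\pmod 4$ (which would force $d_e\equiv 1\pmod 4$). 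That case is therefore impossible, leaving $e'\equiv 1\pmod 4$. The only real obstacle is the determination of $\epsilon_2(\langle e^\perp\rangle)$, i.e. verifying the uniqueness of the even ternary $\mathbb{Z}_2$-lattice of determinant $-2d_e$; once that is in hand the remainder is routine Hilbert-symbol bookkeeping modulo $8$. (One could equally run the argument through discriminant forms, matching the $2$-part $q_L^{(2)}=\langle\tfrac32\rangle\oplus\langle\tfrac32\rangle$ against $q_{\langle e\rangle}^{(2)}\oplus q_{\langle e^\perp\rangle}^{(2)}$ and using the signature-mod-$8$ invariant to separate the three forms on $(\mathbb{Z}/2)^2$; this gives the same conclusion.)
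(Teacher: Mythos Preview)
Your argument is correct and follows essentially the same strategy as the paper: both exploit the splitting $L=\langle e\rangle\oplus\langle e^\perp\rangle$ together with the uniqueness of the even ternary $\mathbb{Z}_2$-lattice of determinant $-2d_e$ (the sign-walking from Lemma~1) to compare $L\otimes\mathbb{Z}_2$ against $\langle -2e'\rangle\oplus U\oplus\langle 2d_e\rangle$. The only difference is packaging---the paper reads off the constraint from the Conway--Sloane symbols $1_{II}^{+2}2_{d-1}^{\omega(d)2}$ versus $1_{II}^{+2}2_{d_e-e'}^{\omega(d_e)\omega(e')2}$ by a case check mod~$8$, whereas you translate the same comparison into the Hasse-invariant identity $\epsilon_2(L)=\epsilon_2(\langle e^\perp\rangle)\cdot(-2e',-2d_e)_2$ and reduce to $1=(-1,e')_2(e',d_e)_2$; your route is arguably cleaner since the final case analysis needs only residues mod~$4$.
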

\begin{proof}
In the Conway-Sloane notation the lattice $L$ has the form $1_{II}^{+2}2_{d-1}^{\omega(d)2}$. The lattice $U\oplus \langle 2d_e \rangle \oplus \langle -2e'\rangle$ has the form $1_{II}^{+2}2_{d_e-e'}^{\omega(d_e)\omega(e')2}$. Going through all the options on $e', \ d_e \ \pmod{8}$, we obtain the equivalence of these lattices $\Leftrightarrow \ e'=1 \pmod{4}$.
\end{proof}

Let $e_i$ be the basis of $L$.
\begin{lemma}
For $e^2=-2kp_{i_1}\ldots p_{i_m}$ to be a root ($k=1$ or $k=2$) it is necessary that $d_e t^2 = -k \pmod{p_{i_j}}$ is solvable for each $p_{i_j},\ 1\leq j \leq m$.
\end{lemma}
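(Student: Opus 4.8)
The plan is to run the same coordinate computation that established the analogous divisibility condition in the case $d=1\pmod 4$ (the lemma yielding $\left(\tfrac{d_e}{p_{i_j}}\right)=1$), now adapted to the lattice $L=U\oplus\langle -2\rangle\oplus\langle 2d\rangle$ attached to $d=3\pmod 4$. Fix the standard basis $e_1,e_2,e_3,e_4$, so that $e_1,e_2$ span $U$, $(e_3,e_3)=-2$ and $(e_4,e_4)=2d$ with $(e_3,e_4)=0$. Writing $e=xe_1+ye_2+ze_3+te_4$ gives $e^2=2xy-2z^2+2dt^2$, and abbreviating $P:=p_{i_1}\ldots p_{i_m}$ the hypothesis $e^2=-2kP$ becomes, after dividing by $2$, the single equation $xy-z^2+dt^2=-kP$.

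Next I would extract the divisibility coming from the root condition. Since $e$ is primitive and the reflection $r_e$ preserves $L$, we have $\tfrac{e}{kP}\in L^V$, equivalently $(e,v)\in kP\,\mathbb Z$ for every $v\in L$. Testing against the basis and reading off the Gram data gives $kP\mid x$, $kP\mid y$, $kP\mid 2z$, and $kP\mid 2dt$; the last is automatic since $d=Pd_e$ with $\gcd(P,d_e)=1$ ($d$ being squarefree). This is the place where the hypothesis $d=3\pmod 4$ is used in an essential way: then $d$, hence every $p_{i_j}$, is odd, so $kP\mid 2z$ forces $P\mid z$. Thus $P$ divides each of $x,y,z$, for both $k=1$ and $k=2$.

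Writing $x=Px''$, $y=Py''$, $z=Pz''$ and substituting into $xy-z^2+dt^2=-kP$, I factor out a single power of $P$ (using $d=Pd_e$) to obtain $P(x''y''-z''^2)+d_et^2=-k$. Reducing modulo any $p_{i_j}\mid P$ gives $d_et^2\equiv -k\pmod{p_{i_j}}$, and since the actual last coordinate $t$ of the root satisfies this congruence, it is in particular solvable, which is the required statement. (One also notes $p_{i_j}\nmid d_e$, so $d_e$ is invertible mod $p_{i_j}$ and the congruence is a genuine constraint.)

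The argument is essentially bookkeeping, so the only genuine point requiring care is the control of the powers of $2$: I must deduce $P\mid z$ rather than merely $P\mid 2z$, and I must check that the factor $2d$ attached to $e_4$ imposes no condition on $t$. Both are secured by the oddness of $P$ together with the squarefreeness of $d$ (which gives $\gcd(P,d_e)=1$), and no case distinction between $k=1$ and $k=2$ is needed beyond verifying that $kP\mid 2z\Rightarrow P\mid z$ for each value of $k$.
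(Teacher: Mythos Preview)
Your argument is correct and follows essentially the same route as the paper: write $e=(x,y,z,t)$ in the basis of $L=U\oplus\langle-2\rangle\oplus\langle 2d\rangle$, use the root condition to get $P\mid x,y,z$, substitute, divide by $P$, and reduce modulo each $p_{i_j}$. The only cosmetic differences are that the paper invokes the weaker $\tfrac{e}{P}\in L^V$ (rather than your $\tfrac{e}{kP}\in L^V$) and deduces $P\mid z$ directly without commenting on the 2-adic point, while you make the oddness of $P$ explicit; neither changes the substance.
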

\begin{proof}
 Let $e=(x,\ y,\ z,\ t)$ be a root. We can rewrite this in coordinates: $e^2=-2kp_{i_1}\ldots p_{i_m}$: $2xy-2z^2+2dt^2=-2kp_{i_1}\cdot \ldots \cdot p_{i_m}$. Besides, $\frac{e}{p_{i_1}\ldots p_{i_m}}\in L^V$. Taking the scalar product $(e,e_i)$, we get that $p_{i_1}\ldots p_{i_m} | x,$ $ p_{i_1}\ldots p_{i_m}|y,$ $ p_{i_1}\ldots p_{i_m}|z$. Let $x=p_{i_1}\ldots p_{i_m} x'$, $y=p_{i_1}\ldots p_{i_m}y'$, $z=p_{i_1}\ldots p_{i_m}z'$, substituting this to the first equation, we get:
$$ 2x'y'(p_{i_1}\ldots p_{i_m})^2-2(p_{i_1}\ldots p_{i_m})^2 z'^2+2dt^2=-2kp_{i_1}\ldots p_{i_m}$$
Dividing by $2p_{i_1}\ldots p_{i_m}$, we get:
$$x'y'p_{i_1}\ldots p_{i_m}-p_{i_1}\ldots p_{i_m}z'^2+\frac{d}{p_{i_1}\ldots p_{i_m}}t^2=-k$$
$$x'y'p_{i_1}\ldots p_{i_m}-p_{i_1}\ldots p_{i_m}z'^2+d_e t^2=-k$$

Considering the last equality mod each $p_{i_j}$, we get that $d_e t^2 = -k \pmod{p_{i_j}}$ must be solvable.

\end{proof}
\subsection*{Reflections, $d=2 \pmod{4}$}
The invariant factors of $L$ are $1,\ 1,\ 2,\ 4p_1\ldots p_k=:4d'$. Each root has an even length, dividing $8d'$ (doubled maximal invariant factor). That's why a priori $-2p_{i_1}\ldots p_{i_m}$, $-4p_{i_1}\ldots p_{i_m}$ and $-8p_{i_1}\ldots p_{i_m}$-roots can exist, where $\lbrace i_1,\ldots i_m \rbrace \subseteq \lbrace 1,\ldots,k \rbrace$ (including the empty set).
Let $e_i$ be the basis of $L$.
\begin{lemma}
There are no $-8p_{i_1}\ldots p_{i_m}$-roots.
\end{lemma}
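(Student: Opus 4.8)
The plan is to work in the explicit model $L = U\oplus\langle -2\rangle\oplus\langle 2d\rangle$, for which the quadratic form in coordinates $(x,y,z,t)$ with respect to the basis $e_1,e_2,e_3,e_4$ (where $U=\langle e_1,e_2\rangle$, $e_3^2=-2$, $e_4^2=2d$) is $e^2 = 2xy - 2z^2 + 2dt^2$, the same form used in Lemma 18. Since $d\equiv 2\pmod{4}$ we have $d=2d'$ with $d'=p_1\cdots p_k$ odd, and a putative $-8p_{i_1}\cdots p_{i_m}$-root satisfies $e^2=-8e'$ with $e'=p_{i_1}\cdots p_{i_m}\mid d'$ odd. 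I would assume that such a primitive root $e=(x,y,z,t)$ exists and derive a contradiction by proving that in fact $2\mid e$ in $L$, which is incompatible with primitivity.

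First I would extract divisibility of the coordinates from the reflection condition. Being a root means $\tfrac{2e}{e^2}=\tfrac{e}{-4e'}\in L^V$, i.e. $(e,L)\subseteq 4e'\mathbb{Z}$. Pairing $e$ against the basis gives $(e,e_1)=y$, $(e,e_2)=x$, $(e,e_3)=-2z$ and $(e,e_4)=2dt=4d't$, whence $4e'\mid x$, $4e'\mid y$, $2e'\mid z$, while the constraint coming from $e_4$ is vacuous because $e'\mid d'$. Next I would set $x=4e'x_1$, $y=4e'y_1$, $z=2e'z_1$, substitute into $e^2=-8e'$, divide the resulting relation by $4$, and reduce it modulo $2$; since $d'$ and $e'$ are odd this forces $t^2\equiv 0\pmod{2}$, hence $t$ even. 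At that point all four coordinates of $e$ are even, so $e/2\in L$, contradicting primitivity, and the lemma follows.

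The step I expect to carry the real content is the reduction modulo $2$ that forces $t$ to be even: this is exactly where the length $-8e'$ (rather than $-2e'$ or $-4e'$) is used, together with the fact that $v_2(2d)=2$ in the case $d\equiv 2\pmod{4}$. It is worth contrasting this with the $-2e'$ and $-4e'$ cases, where the analogous computation leaves one coordinate free or forces it to be odd, so that no contradiction appears — consistent with those roots genuinely existing. The only thing to be careful about is the $2$-adic bookkeeping: that $d'$ and $e'$ are odd, that $2d=4d'$ contributes valuation exactly $2$, and that the division by $4$ and the final reduction modulo $2$ are legitimate. One could instead phrase the argument over $\mathbb{Z}_2$ using the Jordan splitting $U\oplus\langle -2\rangle\oplus\langle 4d'\rangle$, but the global coordinate computation seems cleaner and avoids invoking the discriminant form.
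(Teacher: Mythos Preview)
Your argument is correct and is essentially the paper's own proof. Both work in the coordinate model $e^2=2xy-2z^2+4d't^2$, extract the divisibilities $4e'\mid x,y$ and $2e'\mid z$ from $(e,L)\subseteq 4e'\mathbb{Z}$, substitute, and use that $d'/e'$ (in the paper) or $d'$ (in your version) is odd to force $2\mid t$, contradicting primitivity; the only difference is that the paper divides the substituted equation by $2e'$ whereas you divide by $4$, which is immaterial.
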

\begin{proof}
Let $e=(x,\ y,\ z,\ t)$ be a primitive $-8p_{i_1}\ldots p_{i_m}$-root.  We can rewrite this in coordinates: $2xy-2z^2+4d't^2=-8p_{i_1}\cdot \ldots \cdot p_{i_m}$. Besides, $\frac{e}{4p_{i_1}\ldots p_{i_m}}\in L^V$. Taking the scalar product $(e,e_i)$, we get that $4p_{i_1}\ldots p_{i_m} | x,$ $ 4p_{i_1}\ldots p_{i_m}|y,$ $ 2p_{i_1}\ldots p_{i_m}|z$. Let $x=4p_{i_1}\ldots p_{i_m} x'$, $y=4p_{i_1}\ldots p_{i_m}y'$, $z=2p_{i_1}\ldots p_{i_m}z'$, substituting this to the first equation, we get:
$$ 16x'y'(p_{i_1}\ldots p_{i_m})^2-4(p_{i_1}\ldots p_{i_m})^2 z'^2+2d't^2=-4p_{i_1}\ldots p_{i_m}$$
Dividing by $2p_{i_1}\ldots p_{i_m}$, we get:
$$8x'y'p_{i_1}\ldots p_{i_m}-2p_{i_1}\ldots p_{i_m}z'^2+\frac{d'}{p_{i_1}\ldots p_{i_m}}t^2=-2$$
We obtain that $2|t$, and that contradicts that $e$ is primitive. 
\end{proof}

\begin{lemma} Let $e$ be a primitive root of $L$, then one of the following statements is true:\\
a) $e^2=-4p_{i_1}\ldots p_{i_m}=:-4e'$,
$det(\langle e^\perp\rangle) = \frac{-2d'}{p_{i_1}\ldots p_{i_m}}=:-2d_e$.\\
b) $e^2=-4p_{i_1}\ldots p_{i_m}=:-4e'$,
$det(\langle e^\perp\rangle) = \frac{-8d'}{p_{i_1}\ldots p_{i_m}}=:-8d_e$.\\
c) $e^2=-2p_{i_1}\ldots p_{i_m}=:-2e'$,
$det(\langle e^\perp \rangle) = \frac{-4d'}{p_{i_1}\ldots p_{i_m}}=:-4d_e$.\\
d)$e^2=-2p_{i_1}\ldots p_{i_m}=:-2e'$, $det(\langle e^\perp \rangle) = \frac{-16d'}{p_{i_1}\ldots p_{i_m}}=:-16d_e$.
\end{lemma}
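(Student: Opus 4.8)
The plan is to follow the same strategy as in the proof of Lemma 13, namely to combine the list of admissible root lengths with the two possibilities of Lemma 5 and then read off $\det(\langle e^\perp\rangle)$ from the index--determinant relation. First I would invoke Lemma 19 to discard the length $-8p_{i_1}\ldots p_{i_m}$ entirely, so that a primitive root $e$ can only satisfy $e^2=-2p_{i_1}\ldots p_{i_m}=-2e'$ or $e^2=-4p_{i_1}\ldots p_{i_m}=-4e'$. For each of these two lengths, Lemma 5 gives exactly two subcases: either $e$ splits off as a direct summand, $L=\langle e\rangle\oplus\langle e^\perp\rangle$, or $[L:\langle e\rangle\oplus\langle e^\perp\rangle]=2$. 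This produces the four combinations that should correspond to items a)--d).

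The key computation is the determinant bookkeeping. Since the invariant factors of $L$ are $1,1,2,4d'$, we have $\det L = 8d' = 4d$, and $\langle e^\perp\rangle$ has signature $(2,1)$, so $\det(\langle e^\perp\rangle)<0$. Using $\det(\langle e\rangle\oplus\langle e^\perp\rangle)=[L:\langle e\rangle\oplus\langle e^\perp\rangle]^2\det L$ together with $\det(\langle e\rangle\oplus\langle e^\perp\rangle)=e^2\det(\langle e^\perp\rangle)$, the splitting case gives $|e^2|\cdot|\det(\langle e^\perp\rangle)|=8d'$ and the index-$2$ case gives $|e^2|\cdot|\det(\langle e^\perp\rangle)|=32d'$. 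Substituting $|e^2|=4e'$ yields $|\det(\langle e^\perp\rangle)|=2d'/e'=2d_e$ (split) and $8d'/e'=8d_e$ (index $2$), which are items a) and b); substituting $|e^2|=2e'$ yields $4d'/e'=4d_e$ (split) and $16d'/e'=16d_e$ (index $2$), which are items c) and d).

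The point that requires genuine care --- and the main difference from Lemma 13 --- is checking that none of these four combinations is spuriously excluded. In Lemma 13 the splitting subcase for $e^2=-4e'$ was impossible because there $\det(\langle e^\perp\rangle)$ came out odd, contradicting the fact that an even lattice of odd rank must have even determinant (its Gram matrix reduces mod $2$ to an alternating $3\times3$ matrix, hence singular). Here, because $D=8d'$ carries one more factor of $2$ than in the $d\equiv 3\pmod 4$ case, every one of the four values $2d_e,8d_e,4d_e,16d_e$ is even, so the evenness obstruction is satisfied in all four subcases and nothing is ruled out --- which is precisely why the present lemma lists four cases rather than three. The main obstacle is therefore purely one of careful arithmetic: keeping the extra factor of $2$ in $\det L=8d'$ and the factor $4$ from the index-$2$ gluing straight, so that the four determinants are assigned to the correct root lengths and gluing types.
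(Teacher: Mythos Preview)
Your proposal is correct and follows essentially the same approach as the paper: invoke the preceding lemma (no $-8p_{i_1}\ldots p_{i_m}$-roots) to reduce to the two admissible lengths $-2e'$ and $-4e'$, then apply Lemma~5 to split each into the direct-summand and index-$2$ subcases, reading off $\det(\langle e^\perp\rangle)$ from $|e^2|\cdot|\det(\langle e^\perp\rangle)|=[L:\langle e\rangle\oplus\langle e^\perp\rangle]^2\cdot 8d'$. The paper's own proof is terser---it simply asserts the correspondence with Lemma~5.a) and~5.b) and omits both the determinant arithmetic and your evenness discussion---but the logic is identical. One minor slip: the lemma you cite to discard the $-8e'$ length is the \emph{preceding} lemma (Lemma~18 in the paper's numbering), not Lemma~19, which is the statement you are proving.
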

\begin{proof}
The statements a), b) corresponds to the cases from lemma 5.a) and 5.b) respectively for $e^2=-4p_{i_1}\ldots p_{i_m}$, the statements c) and d) correspond to the cases from lemma 5.a) and 5.b) respectively for $e^2=-2p_{i_1}\ldots p_{i_m}$.
\end{proof}

\begin{lemma}
a) If $e$ is a primitive $-4p_{i_1}\ldots p_{i_m}$-roots of $L$ such that $det(\langle e^\perp \rangle) =-2d_e$, then it is unique up to conjugacy.\\
b) There exist at most three different $e$ -- primitive $-4p_{i_1}\ldots p_{i_m}$-roots of $L$ such that $det(\langle e^\perp \rangle) =-8d_e$ up to conjugacy. Moreover, if $\langle e^\perp \rangle \cong U(2)\oplus \langle 2d_e\rangle$, then $e$ is unique up to conjugacy.\\
c) If $e$ is a primitive $-2p_{i_1}\ldots p_{i_m}$-root of $L$ such that $det(\langle e^\perp \rangle) =-4d_e$, then it is unique up to conjugacy.\\
d) There exist at most two different $e$ -- primitive $-2p_{i_1}\ldots p_{i_m}$-roots of $L$ such that $det(\langle e^\perp \rangle) =-16d_e$ up to conjugacy.
\end{lemma}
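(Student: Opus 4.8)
The plan is to treat all four cases with the same machinery developed in Lemmas 5--8: the determinant of $\langle e^\perp\rangle$ is fixed in each case by Lemma 20, so the problem reduces to (i) identifying the isometry class(es) of $\langle e^\perp\rangle$ inside its genus via the classification Lemmas 1--4, and then (ii) counting the admissible glueing vectors, which Lemma 8.b converts into a bound on the number of conjugacy classes of roots. Throughout, $d_e$ is odd (since $d=2d'$ with $d'$ odd), so every determinant that occurs is $2$, $4$, $8$ or $16$ times an odd square-free integer, which is exactly the range covered by Lemmas 1--4.

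Cases a) and c) are the split cases (Lemma 5.a), so there is no glueing vector and $e$ is an orthogonal direct summand of $L$. Here $\det\langle e^\perp\rangle=-2d_e$ (resp.\ $-4d_e$), the Hasse invariant $\epsilon_p(\langle e^\perp\rangle)$ is forced by $\epsilon_p(L)=\epsilon_p(\langle e\rangle\oplus\langle e^\perp\rangle)$ together with the fixed value $e^2=-4e'$ (resp.\ $-2e'$), and Lemma 1 (resp.\ Lemma 2) shows that $\langle e^\perp\rangle$ is then unique in its class. Since $e^2$ is fixed and $e$ splits off, any two such roots have isometric complements; the isometry extends (Lemma 7) to an automorphism of $L$ carrying one root to the other, so $e$ is unique up to conjugacy. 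This is the argument opening the proof of Lemma 16.b.

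Cases b) and d) are the index-$2$ cases (Lemma 5.b) and carry the real content. First I would pin down the invariant factors of $\langle e^\perp\rangle$: evenness excludes a unimodular rank-one component, and the surviving alternative ($1,1,8d_e$ in case b), $1,1,16d_e$ in case d)) is ruled out exactly as in Lemma 14.b, by checking that its unique order-two class would make $x^2=\tfrac{e^2}{4}+(pr_{e^\perp}x)^2$ a non-integer, so that no glueing vector could exist. Thus the factors are $2,2,2d_e$ in case b) and $2,2,4d_e$ in case d), whereupon Lemma 3 (one class, either $U(2)\oplus\langle 2d_e\rangle$ or $\left(\begin{smallmatrix}4&2\\2&4\end{smallmatrix}\right)\oplus\langle-6d_e\rangle$ according to $\epsilon_2(\langle e^\perp\rangle)$) resp.\ Lemma 4 (the two classes, doubled even and doubled odd) applies; in case d) one applies Lemma 8.b separately within each isometry class, which is legitimate since its proof only uses that two complements are isometric. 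As the integrality $x\cdot e=\tfrac{e^2}{2}\in\mathbb{Z}$ is automatic, the sole constraint on an order-two element $pr_{e^\perp}x$ of $\mathrm{disc}(\langle e^\perp\rangle)$ is $q(pr_{e^\perp}x)\equiv-\tfrac{e^2}{4}\pmod{2\mathbb{Z}}$, so I would enumerate the seven involutions of the $2$-torsion subgroup $(\mathbb{Z}/2)^3$, retain those meeting this congruence, and reduce modulo $\mathrm{Aut}(\langle e^\perp\rangle)$. In case b) the twist $\tfrac{e^2}{4}=-e'$ is an \emph{odd integer}, so one keeps the involutions of integer $q$-value $1$: for $U(2)\oplus\langle2d_e\rangle$ this is the single element $(1,1,0)$ (giving uniqueness), while for $\left(\begin{smallmatrix}4&2\\2&4\end{smallmatrix}\right)\oplus\langle-6d_e\rangle$ all three nonzero involutions of the $\left(\begin{smallmatrix}4&2\\2&4\end{smallmatrix}\right)$-block have $q=1$, yielding the bound three. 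In case d) one has $\tfrac{e^2}{4}=-\tfrac{e'}{2}$, a \emph{half-integer}, so only involutions with half-integer $q$ survive: the doubled-even lattice has none, while the doubled-odd lattice $\langle2\epsilon_1\rangle\oplus\langle2\epsilon_2\rangle\oplus\langle-4d_e\epsilon_1\epsilon_2\rangle$ contributes at most two, completing the bound.

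The main obstacle is the explicit discriminant-form bookkeeping in cases b) and d): writing the quadratic form on the $2$-torsion $(\mathbb{Z}/2)^3$ for each candidate complement, computing the $q$-values of all seven involutions modulo $2\mathbb{Z}$, and selecting those satisfying $q\equiv-\tfrac{e^2}{4}$ for each residue of $d_e,\epsilon_1,\epsilon_2\pmod 8$. The delicate point is that the parity of $\tfrac{e^2}{4}$ (an integer when $e^2=-4e'$, a half-integer when $e^2=-2e'$) selects completely disjoint sets of involutions, and this is precisely what separates the uniqueness conclusions from the bounds three and two; some care is also needed because $\mathrm{Aut}(\langle e^\perp\rangle)$ may identify several of the retained glueing vectors, so the asserted numbers are genuinely upper bounds rather than exact counts.
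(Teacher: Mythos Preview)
Your proposal is correct and follows essentially the same route as the paper: parts a) and c) are the split cases handled via Lemmas 1--2 and the direct-summand argument, while parts b) and d) are the index-$2$ cases where you pin down the invariant factors, invoke Lemmas 3--4 for genus-uniqueness, and then count order-$2$ elements in the discriminant group with the right $q$-value, exactly as the paper does. Your emphasis on the parity of $e^2/4$ (odd integer versus half-integer) as the selector between the two regimes is a clean way to organise the case split and matches the paper's explicit enumeration.

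One small slip: in case b) you exclude the invariant factors $1,1,8d_e$ by saying $x^2$ would be a non-integer ``exactly as in Lemma 14.b''. That reasoning is correct for case d), but in case b) one has $e^2=-4e'$, so $x^2=-e'+2d_e$ is an \emph{odd integer}, not a non-integer; the contradiction is with the evenness of $L$ rather than with integrality. The paper makes this distinction (``odd length'' in b), ``not integer'' in d)). This does not affect the argument's validity, only the stated reason. Also, in the $U(2)\oplus\langle 2d_e\rangle$ subcase of b) the paper lists $(1,1,0)$ or $(1,1,1)$ as the candidate glueing vectors (only one surviving by parity), whereas you name only $(1,1,0)$; either way the conclusion is uniqueness, so this is a cosmetic difference.
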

\begin{proof}
a) It follows from lemma 1 and the fact that $e$ splits off as a direct summand.\\
b) The invariant factors can't be 1,1,8. Otherwise $disc(e^\perp)\cong \mathbb{Z}/8\mathbb{Z}$, and there exists a unique element of order 2. If $x$ is the glueing vector then $x^2=(\frac{1}{2}e)^2+(pr_{e^\perp}x)^2=\frac{1}{4}e^2+\frac{1}{4}8d_e$, i.e. the glueing vector has an odd length and that is a contradiction. By lemma 3 $\langle e^\perp \rangle$ is unique in it's genus. $disc(e^\perp)\cong (\mathbb{Z}/2\mathbb{Z})^3$. Going through all the elements of order 2 and the corresponding glueing vectors we get that if $\langle e^\perp \rangle \cong \begin{pmatrix}
4 & 2  \\
2 & 4 \\
\end{pmatrix} \oplus \langle -6d' \rangle$, then the only glueing vectors that have an integer even length are those corresponding to the elements (1,0,0), (0,1,0) and (1,1,0). If $\langle e^\perp \rangle \cong U(2)\oplus \langle 2d_e\rangle$, then the only glueing vector that has an integer even length is the one corresponding to (1,1,0) or (1,1,1) (their lengths have different oddities so only one of them is possible). The statement follows from lemma 8.\\
c) It follows from lemma 2 and the fact that $e$ splits off as a direct summand.\\
d) The invariant factors can't be $1,1,16d_e$, because then $disc(e^\perp)\cong \mathbb{Z}/16\mathbb{Z}$, and there exists a unique element of order 2. If $x$ is a glueing vector then $x^2=(\frac{1}{2}e)^2+(pr_{e^\perp}x)^2=\frac{1}{4}e^2+\frac{1}{4}16d_e$, i.e. the length of the glueing vector is not integer, which is impossible. So, the invariant factors are $2,2,4d_e$, $disc(\langle e^\perp \rangle)\cong \mathbb{Z}/2\mathbb{Z}\times \mathbb{Z}/2\mathbb{Z}\times \mathbb{Z}/4\mathbb{Z}$. There are 7 elements of order 2 in this group. By lemma 4 $\langle e^\perp \rangle\cong U(2)\oplus \langle 4d_e \rangle$ or $\langle e^\perp \rangle\cong \langle 2\epsilon_1 \rangle \oplus \langle 2\epsilon_2 \rangle \oplus \langle -4\epsilon_1 \epsilon_2 \rangle$. 

In the first case we get that all elements of order 2 in $disc(\langle e^\perp \rangle)$ correspond to a glueing vector with non-integer length. So, this case is impossible. 

All the possibilities for $\langle e^\perp \rangle$ are listed in the table in lemma 4. Going through all the possibilities we get that there are at most 2 different glueing vectors of even length in each case. The statement follows from lemma 8.

\end{proof}

\begin{lemma}
For $e^2=-2kp_{i_1}\ldots p_{i_m}$ to be a root ($k=1$ or $k=2$), it is necessary  that $2d_e t^2 = -k \pmod{p_{i_j}}$ is solvable for each  $p_{i_j},\ 1\leq j \leq m$.
\end{lemma}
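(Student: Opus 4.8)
The plan is to run the argument of Lemma~18 almost verbatim, the only substantive change being that the coefficient of $t^2$ in the quadratic form is now $4d' = 2d$ with $d$ even, rather than $2d$ with $d$ odd. First I would fix the basis $e_1,\dots,e_4$ so that for $e=(x,y,z,t)$ the Gram form reads $e^2 = 2xy - 2z^2 + 4d't^2$ (this is the form already used in the proof of Lemma~19), and write down the defining equation of a primitive $-2kp_{i_1}\ldots p_{i_m}$-root:
\[
2xy - 2z^2 + 4d't^2 = -2kp_{i_1}\ldots p_{i_m}.
\]

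Next I would extract divisibility. Since $e$ is a root, $\frac{e}{p_{i_1}\ldots p_{i_m}}\in L^V$; pairing this vector against each basis vector $e_i$ gives $p_{i_1}\ldots p_{i_m}\mid x$, $p_{i_1}\ldots p_{i_m}\mid y$ and $p_{i_1}\ldots p_{i_m}\mid 2z$. The one place where the hypothesis $d\equiv 2\pmod 4$ enters is that $e':=p_{i_1}\ldots p_{i_m}$ is odd, so $e'\mid 2z$ upgrades to $e'\mid z$ (the pairing with $e_4$ imposes no condition on $t$, as expected, since $e'\mid d'$). Writing $x=e'x'$, $y=e'y'$, $z=e'z'$ and substituting, the equation becomes
\[
2e'^2 x'y' - 2e'^2 z'^2 + 4d't^2 = -2ke',
\]
and dividing through by $2e'$ yields
\[
e'x'y' - e'z'^2 + \frac{2d'}{e'}\,t^2 = -k.
\]

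The key bookkeeping step is to recognise $\frac{2d'}{e'} = 2d_e$, where $d_e = d'/e'$ as in Lemma~20; this is precisely where the extra factor of $2$ appears compared with Lemma~18, reflecting that $d=2d'$ is even. Reducing the last display modulo each $p_{i_j}$ (every one of which divides $e'$) annihilates the first two terms and leaves $2d_e t^2 \equiv -k \pmod{p_{i_j}}$, which must therefore be solvable. This is exactly the claimed necessary condition.

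I do not expect any genuine obstacle here: the computation is routine and entirely parallel to Lemmas~11 and~18. The only two points that require care are (i) using the oddness of $e'$ to pass from $e'\mid 2z$ to $e'\mid z$, and (ii) tracking the coefficient of $t^2$ so that after dividing by $2e'$ one obtains $2d_e$ rather than $d_e$; an arithmetic slip at either point would alter the stated congruence.
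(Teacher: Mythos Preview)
Your argument is correct and is essentially identical to the paper's own proof: both write $e=(x,y,z,t)$ in the basis with $e^2=2xy-2z^2+4d't^2$, extract the divisibilities $e'\mid x,\,y,\,z$ from $\tfrac{e}{e'}\in L^V$, divide through by $2e'$, and reduce modulo each $p_{i_j}$ to obtain $2d_e t^2\equiv -k$. The only difference is that you make explicit the step from $e'\mid 2z$ to $e'\mid z$ via the oddness of $e'$, which the paper leaves implicit.
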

\begin{proof}
Let $e=(x,\ y,\ z,\ t)$ be a root.  We can rewrite this in coordinates: $e^2=-2kp_{i_1}\ldots p_{i_m}$: $2xy-2z^2+4d't^2=-2kp_{i_1}\cdot \ldots \cdot p_{i_m}$. Besides, $\frac{e}{p_{i_1}\ldots p_{i_m}}\in L^V$. Taking the scalar product $(e,e_i)$, we get that $p_{i_1}\ldots p_{i_m} | x,$ $p_{i_1}\ldots p_{i_m}|y,$ $p_{i_1}\ldots p_{i_m}|z$. Let $x=p_{i_1}\ldots p_{i_m} x'$, $y=p_{i_1}\ldots p_{i_m}y'$, $z=p_{i_1}\ldots p_{i_m}z'$, substituting this to the first equation, we get:
$$ 2x'y'(p_{i_1}\ldots p_{i_m})^2-2(p_{i_1}\ldots p_{i_m})^2 z'^2+4d't^2=-2kp_{i_1}\ldots p_{i_m}$$
Dividing by $2p_{i_1}\ldots p_{i_m}$, we get:
$$x'y'p_{i_1}\ldots p_{i_m}-p_{i_1}\ldots p_{i_m}z'^2+2\frac{d'}{p_{i_1}\ldots p_{i_m}}t^2=-k$$
$$x'y'p_{i_1}\ldots p_{i_m}-p_{i_1}\ldots p_{i_m}z'^2+2d_e t^2=-k$$

Considering it mod each $p_{i_j}$ we get that $2d_e t^2 = -k \pmod{p_{i_j}}$ must be solvable. 

\end{proof}

\section*{ Estimation of volumes, d=1 mod 4}



\begin{lemma}
Let e be a primitive root of $L$,  $-2d_e=-2p_{i_1}\ldots p_{i_m}$. The maximal possible volume $Vol(\langle e^\perp \rangle)=\frac{1}{24}\frac{p_{i_1}+1}{2} \cdot \ldots \cdot \frac{p_{i_m}+1}{2}$
\end{lemma}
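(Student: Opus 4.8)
The plan is to reduce everything to an explicit computation for one concrete family of ternary lattices, and then evaluate the volume through the accidental isomorphism $SO^+(2,1)\cong \mathrm{PSL}_2(\mathbb{R})$. By Lemma 9 the lattice $\langle e^\perp\rangle$ is even of signature $(2,1)$ with $\det=-2d_e$, and by Lemma 1 its class is determined by this determinant together with its Hasse invariant. Running the same Hasse-invariant comparison as in the proof of Lemma 12 (now for the composite $d_e=p_{i_1}\cdots p_{i_m}$ rather than a single prime) identifies it as $\langle e^\perp\rangle\cong U\oplus\langle 2d_e\rangle$. Thus the entire computation reduces to the single family $U\oplus\langle 2d_e\rangle$ indexed by the square-free integer $d_e$, and it suffices to compute the volume of the associated arithmetic quotient of the upper half-plane.

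First I would transport $O(\langle e^\perp\rangle)$ through the exceptional isomorphism, where it becomes (modulo $\pm\mathrm{id}$) an arithmetic Fuchsian group commensurable with $\mathrm{PSL}_2(\mathbb{Z})$. Tracking the isometries of $U\oplus\langle 2d_e\rangle$ one identifies this group with the full Atkin--Lehner extension $\Gamma_0(d_e)^+$, i.e.\ $\Gamma_0(d_e)$ enlarged by the involutions $w_{p_{i_j}}$. The hyperbolic area is then read off from the classical index formula
\[
[\mathrm{PSL}_2(\mathbb{Z}):\Gamma_0(d_e)]=\prod_{j}(p_{i_j}+1),
\]
so that after dividing by the $2^m$ Atkin--Lehner involutions the covolume equals $\frac{\pi}{3}\prod_j\frac{p_{i_j}+1}{2}$. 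The normalization, which already incorporates the index-$2$ passage between $O^+$ and $O$ recorded in the introduction, is pinned down by the base case $d_e=1$: there $\langle e^\perp\rangle\cong U\oplus\langle 2\rangle$ and $SO^+(\langle e^\perp\rangle)\cong\mathrm{PSL}_2(\mathbb{Z})$, which must give $Vol=\tfrac{1}{24}$. Rescaling accordingly yields $Vol(O^+(\langle e^\perp\rangle))=\frac{1}{24}\prod_j\frac{p_{i_j}+1}{2}$.

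The word ``maximal'' is then handled by a monotonicity argument. Since the area of the quotient orbifold is governed by Gauss--Bonnet, a \emph{larger} isometry group produces a \emph{smaller} fundamental domain. For every square-free $d_e$ the group $O^+(\langle e^\perp\rangle)$ contains the complete Atkin--Lehner extension, so its volume never exceeds $\frac{1}{24}\prod_j\frac{p_{i_j}+1}{2}$; equality holds whenever no further isometries appear. Any coincidental extra automorphisms, which occur only for small $d_e$, merely enlarge the group and shrink the volume, so the displayed value is genuinely the maximal one, which is exactly what is needed to bound the right-hand side of formula (1).

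The step I expect to be the main obstacle is the identification itself: verifying that, for \emph{every} square-free $d_e$, the image of $O^+(U\oplus\langle 2d_e\rangle)$ in $\mathrm{PSL}_2(\mathbb{R})$ is precisely the Atkin--Lehner extension of $\Gamma_0(d_e)$, neither larger nor smaller, together with the spinor-norm and component bookkeeping that decides which involutions actually survive in $O^+$ rather than only in $O$. This is where the factor $\frac{p_{i_j}+1}{2}$ (as opposed to $p_{i_j}+1$) is forced, and it is also where the hedge ``maximal'' originates. Should one prefer a purely local--global route through the Siegel mass formula instead, the same difficulty resurfaces as the explicit evaluation of the local densities of $U\oplus\langle 2d_e\rangle$: each ramified prime $p_{i_j}$ must contribute the factor $\frac{p_{i_j}+1}{2}$, the prime $2$ a controlled factor, and the product over the remaining primes together with the archimedean density must assemble into the universal constant $\frac{1}{24}$.
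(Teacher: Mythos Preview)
Your computation of $Vol\bigl(O^+(U\oplus\langle 2d_e\rangle)\bigr)$ via the accidental isomorphism and the Atkin--Lehner extension is a legitimate alternative to the paper's Siegel mass-formula computation, and it produces the same value $\tfrac{1}{24}\prod_j\tfrac{p_{i_j}+1}{2}$. Where your argument breaks down is the step just before: the assertion that the Hasse-invariant comparison of Lemma~12 extends verbatim to composite $d_e$ and forces $\langle e^\perp\rangle\cong U\oplus\langle 2d_e\rangle$. It does not. For a general root $e$ the Hasse invariant of $\langle e^\perp\rangle$ at a ramified prime $p_{i_j}$ can be either $+1$ or $-1$, and correspondingly the $p_{i_j}$-unimodular part of $\langle e^\perp\rangle\otimes\mathbb{Z}_{p_{i_j}}$ can be hyperbolic or anisotropic. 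This is not hypothetical: in the proof of Theorem~3 the paper finds, for $D=33$, that $\langle(-2)^\perp\rangle$ is \emph{not} isomorphic to $U\oplus\langle 66\rangle$ over $\mathbb{Z}_3$ or $\mathbb{Z}_{11}$, and its volume carries factors $\tfrac{p-1}{2}$ rather than $\tfrac{p+1}{2}$.

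Consequently you have misread the word ``maximal''. It does not refer to the possibility of extra automorphisms of a fixed lattice shrinking the covolume (there are none: $O^+$ is by definition the full integral isometry group, so your monotonicity paragraph is addressing a non-issue). It refers to the maximum of $Vol(\langle e^\perp\rangle)$ over the finitely many isomorphism classes of even $(2,1)$-lattices with determinant $-2d_e$ allowed by Lemma~1, i.e.\ over the possible sign patterns of the Hasse invariants at the $p_{i_j}$. In the paper's language, each ramified prime contributes a local density $a_{p_{i_j}}=2p_{i_j}(1-p_{i_j}^{-2})(1\pm p_{i_j}^{-1})^{-1}$, and since one multiplies by $a_p^{-1}$ the volume is largest when every sign is ``$+$''; that choice is exactly $U\oplus\langle 2d_e\rangle$. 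Your Atkin--Lehner route can be salvaged, but only after you replace the false identification by this comparison over the different candidate lattices: for each sign flip at $p_{i_j}$ the corresponding Fuchsian group becomes (a conjugate of) the extension of $\Gamma_0(d_e)$ by a smaller collection of Atkin--Lehner involutions together with a quadratic twist, and its covolume picks up $\tfrac{p_{i_j}-1}{2}$ in place of $\tfrac{p_{i_j}+1}{2}$, which is indeed smaller.
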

\begin{proof}
The algorithm for computing the volume can be found in the article \cite{gri}, \cite{sie} and the book \cite{kit}. One needs to compute the local densities $a_p$ of the lattice $L$ for each prime $p$. After that the volume of the lattice $L$ with signature $(2,n)$ is computed by the formula 
\begin{equation*}
V=2|det(L)|^{\frac{n+3}{2}}\prod\limits_{k=1}^{n+2}\pi^{-k/2}\Gamma\left(\frac{k}{2}\right)\cdot \prod a_p^{-1}
\end{equation*}
For $p=2$ $a_2=2^4(1-2^{-2})$. For each $p\neq 2$ and $p\neq p_i$ ($p_i$ -- divisor of $det\langle e^\perp \rangle $), $a_p=(1-p^{-2})$, because the lattice is unimodular. For $p=p_i$ $a_{p_i}=2p_i(1-p_i^{-2})(1\pm p_i^{-1})^{-1}$. The sign "plus" is chosen when the unimodular component is equivalent to the hyperbolic plane, otherwise the sign "minus" is chosen. To compute the volume we multiply by the inverse of each local density $a_p$. So, the maximum is reached when all the signs are pluses. It corresponds to the lattice $U\oplus \langle -2d_l \rangle$, the volume of which is in the statement of the lemma. 
\end{proof}

\begin{lemma}
$Vol(O(L)) = \frac{d^{3/2}}{2^{k+5}\cdot 3\pi^2} L(2,d)$.
\end{lemma}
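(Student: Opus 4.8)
The plan is to apply the Siegel volume formula recalled in the previous lemma, now with $n=2$ since $L$ has signature $(2,2)$, and to reduce the whole computation to an Euler product that is recognisably a special value of $\zeta_{\mathcal{K}}(s)=\zeta(s)L(s,\chi_d)$ at $s=2$, where $\chi_d$ is the quadratic character attached to the discriminant (so that $L(2,d)=L(2,\chi_d)$). First I would substitute $n=2$ into
$$V=2|\det L|^{\frac{n+3}{2}}\prod_{k=1}^{n+2}\pi^{-k/2}\Gamma\left(\frac{k}{2}\right)\prod_p a_p^{-1},$$
and evaluate the archimedean factor using $\Gamma(1/2)=\sqrt{\pi}$ and $\Gamma(3/2)=\frac12\sqrt{\pi}$, which gives $\prod_{k=1}^{4}\pi^{-k/2}\Gamma(k/2)=\frac12\pi^{-4}$. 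This collapses the formula to $V=|\det L|^{5/2}\pi^{-4}\prod_p a_p^{-1}$, so the entire problem becomes the evaluation of the local densities $a_p$.

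The second step is to compute $a_p$ at each prime, splitting the primes into three groups. For $p\nmid 2D$ the lattice $L\otimes\mathbb{Z}_p$ is unimodular of even rank $4$, and the standard density of a unimodular form of rank $2m$ gives $a_p=(1-p^{-2})(1-\chi_d(p)p^{-2})$, where the character attached to the discriminant is exactly $\chi_d(p)=\left(\frac{D}{p}\right)$ because $(-1)^{2}=1$. The key observation is that $a_p^{-1}$ is then precisely the Euler factor at $p$ of $\zeta(2)L(2,\chi_d)=\zeta_{\mathcal{K}}(2)$. For the ramified odd primes $p_i\mid d$ I would Jordan-split $L\otimes\mathbb{Z}_{p_i}$ into a unimodular rank-$3$ block and one block of scale $p_i$ (the character factor being trivial since $\chi_d(p_i)=0$, and the odd-rank unimodular part carrying no hyperbolic/anisotropic ambiguity), and run the $p$-adic density algorithm of \cite{gri},\cite{sie},\cite{kit} to obtain $a_{p_i}=2p_i(1-p_i^{-2})$; the explicit factor $p_i$ is what will lower the power of $d$. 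Finally, for $p=2$ I would feed the explicit $2$-adic Gram matrices from the introduction (the form $U\oplus(\ldots)$ when $d\equiv1\pmod 4$, and its analogues when $d\equiv2,3\pmod4$) into the Conway--Sloane density algorithm.

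With the densities in hand, the third step is bookkeeping. Stripping the finite Euler factors at $2$ and at the $p_i$ out of $\zeta(2)L(2,\chi_d)=\prod_p\big[(1-p^{-2})(1-\chi_d(p)p^{-2})\big]^{-1}$, the unramified primes contribute the whole $L$-value up to those corrections, so that in the representative case $d\equiv1\pmod4$ (where $D=d$ is odd) one assembles
$$\prod_p a_p^{-1}=\zeta(2)L(2,\chi_d)\left(a_2^{-1}(1-2^{-2})(1-\chi_d(2)2^{-2})\right)\prod_i\left(a_{p_i}^{-1}(1-p_i^{-2})\right).$$
Here $a_{p_i}^{-1}(1-p_i^{-2})=(2p_i)^{-1}$, so the ramified product equals $(2^k d)^{-1}$, supplying both the $d^{-1}$ that turns $|\det L|^{5/2}=d^{5/2}$ into $d^{3/2}$ and a factor $2^{-k}$; the $2$-adic computation yields $a_2^{-1}(1-2^{-2})(1-\chi_d(2)2^{-2})=2^{-4}$. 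Hence $\prod_p a_p^{-1}=\zeta(2)L(2,\chi_d)/(2^{k+4}d)$ and, using $\zeta(2)=\pi^2/6$,
$$V=|\det L|^{5/2}\pi^{-4}\prod_p a_p^{-1}=d^{5/2}\pi^{-4}\cdot\frac{\zeta(2)L(2,\chi_d)}{2^{k+4}d}=\frac{d^{3/2}L(2,\chi_d)}{2^{k+5}\cdot 3\pi^2}.$$

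The remaining two residue classes are handled the same way, but they carry the delicate part of the argument, which I expect to be the main obstacle: when $d\equiv2,3\pmod4$ one has $|\det L|=4d$, so $|\det L|^{5/2}$ contributes an extra $2^{5}$, and for $d\equiv2\pmod4$ the prime $2$ is itself ramified (so it is one of the $p_i$ and the $L(2,\chi_d)$ Euler factor at $2$ is trivial). The $2$-adic density $a_2$, which depends on $d\bmod 8$ and on whether $2$ ramifies, must therefore absorb exactly the right power of $2$ so that all three cases collapse to the single closed form with the uniform denominator $2^{k+5}\cdot 3\pi^2$. Verifying this numerically stable cancellation — essentially the careful Conway--Sloane evaluation of $a_2$ in each congruence class and the exact tracking of powers of $2$ — is the step that requires the most care.
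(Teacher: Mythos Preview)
Your computation for $d\equiv 1\pmod 4$ is correct and is exactly the paper's argument: the paper simply records the three local densities $a_2=2^4(1-2^{-2})(1-\chi_d(2)2^{-2})$, $a_{p_i}=2p_i(1-p_i^{-2})$, and $a_p=(1-p^{-2})(1-\chi_d(p)p^{-2})$ and substitutes into the Siegel volume formula, which is precisely the bookkeeping you spell out.

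Your final paragraph, however, rests on a misreading of the statement. This lemma in the paper sits in the section ``Estimation of volumes, $d\equiv 1\pmod 4$'' and applies only to that case; the other residue classes are treated by separate lemmas with genuinely different constants, namely $Vol(O(L))=\frac{d^{3/2}}{2^{k+3}\cdot 3\pi^2}L(2,4d)$ for $d\equiv 3\pmod 4$ and $Vol(O(L))=\frac{(2d')^{3/2}}{2^{k+3}\cdot 3\pi^2}L(2,8d')$ for $d\equiv 2\pmod 4$. So the ``numerically stable cancellation'' you anticipate at $p=2$ does not and should not occur, and that part of your plan would lead you astray if you pursued it. For the lemma as actually stated, your proof is complete.
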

\begin{proof}
The local densities are:\\
$a_2=2^4(1-2^{-2})(1-\left( \frac{d}{2}\right) 2^{-2})$\\
$a_{p_i}=2p_i (1-p_i^{-2})$, where $p_i$ -- divisor of $d$\\
$a_p=(1-p^{-2})(1-\left( \frac{d}{2}\right) p^{-2})$\\
Substituting it in the volume formula we get the statement of the lemma.
\end{proof}

Let $d$ be a prime ($d=p$). All the existing roots and the explicit form of their orthogonal complements are stated in lemma 12. So their volumes can also be calculated explicitly:
\begin{lemma}
$Vol(\langle -2p^\perp \rangle)=\frac{1}{24}$, $Vol(\langle -2^\perp \rangle)=\frac{1}{24}\frac{p+1}{2}$. 
\end{lemma}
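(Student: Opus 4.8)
The plan is to read off the two relevant orthogonal complements from Lemma 12 and then feed them into the volume machinery already set up in Lemma 23. Recall that for $d=p$ prime, Lemma 12 identifies exactly two conjugacy classes of primitive roots: a root with $e^2=-2$ whose complement is $\langle e^\perp\rangle=U\oplus\langle 2p\rangle$, and a root with $e^2=-2p$ whose complement is $\langle e^\perp\rangle=U\oplus\langle 2\rangle$; by Lemma 10 each is unique up to conjugation. Both complements have the shape $U\oplus\langle 2a\rangle$, i.e. a hyperbolic plane direct-summed with a rank-one lattice, and this is precisely the configuration that realizes the maximal local density at every odd prime in the proof of Lemma 23 (the ``plus'' sign, coming from the unimodular part being the hyperbolic plane). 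Hence each volume equals the maximal value produced by Lemma 23, and it remains only to substitute the correct prime set.

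For $\langle -2p^\perp\rangle=U\oplus\langle 2\rangle$ the determinant is $-2$, so in the notation of Lemma 23 the prime set $\{p_{i_1},\dots,p_{i_m}\}$ is empty and the product $\frac{p_{i_1}+1}{2}\cdots\frac{p_{i_m}+1}{2}$ is the empty product $1$; thus $Vol(\langle -2p^\perp\rangle)=\frac{1}{24}$. For $\langle -2^\perp\rangle=U\oplus\langle 2p\rangle$ the determinant is $-2p$, so the prime set is $\{p\}$ and the product equals $\frac{p+1}{2}$, giving $Vol(\langle -2^\perp\rangle)=\frac{1}{24}\frac{p+1}{2}$.

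If one prefers an independent check rather than an appeal to Lemma 23, I would run the volume formula directly with signature $(2,1)$, so $n=1$ and the gamma-factor constant $\prod_{k=1}^{3}\pi^{-k/2}\Gamma(k/2)$ collapses to $\tfrac{1}{2\pi^2}$. The local densities are $a_2=2^4(1-2^{-2})=12$, then $a_{p'}=(1-p'^{-2})$ for odd $p'$ not dividing the determinant, and $a_p=2p(1-p^{-2})(1+p^{-1})^{-1}=2(p-1)$ at $p$ in the second case. Collecting the odd-prime factors via $\prod_{p'\ \mathrm{odd}}(1-p'^{-2})^{-1}=\zeta(2)(1-2^{-2})=\tfrac{\pi^2}{8}$ and folding in $|\det|^2$ reproduces $\tfrac{1}{24}$ and $\tfrac{1}{24}\tfrac{p+1}{2}$ respectively; the factors of $p^2$ from $|\det|^2$ cancel exactly against the $\tfrac{1}{p^2}$ produced by the density $a_p^{-1}$.

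Because every input is already supplied by Lemmas 10, 12 and 23, there is essentially no obstacle here: the only point requiring care is to confirm that the unimodular part of each complement is the hyperbolic plane $U$, so that the density at $p$ carries the favourable sign and the ``maximal'' bound of Lemma 23 is actually attained. This is immediate from the explicit splittings in Lemma 12.
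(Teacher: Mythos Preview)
Your argument is correct and matches the paper's proof: identify the two complements via Lemma 12 as $U\oplus\langle 2\rangle$ and $U\oplus\langle 2p\rangle$, then compute (or read off) the local densities, noting that the unimodular part being the hyperbolic plane forces the ``plus'' sign at $p$. One small slip: the maximal-volume formula you invoke is Lemma 22, not Lemma 23 (Lemma 23 is the volume of $O(L)$); your ``independent check'' paragraph is exactly the computation the paper itself carries out.
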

\begin{proof}
By lemma 12 $\langle -2p^\perp \rangle = U\oplus \langle 2 \rangle$. For $p\neq 2 $ the lattice is unimodular, $a_p=(1-p^{-2})$. For $p=2$ $a_2=2^4(1-2^{-2})$. Substituting it in the volume formula we get the first statement of the lemma.\\
By lemma 12 $\langle -2^\perp \rangle = U\oplus \langle 2p \rangle$. For $p'\neq 2,\ p'\neq p $ the lattice is unimodular, $a_{p'}=(1-p'^{-2})$. For $p=2$ $a_2=2^4(1-2^{-2})$. $a_{p'}=2p'(1-p'^{-2})(1+ p'^{-1})^{-1}$. The sign "plus" in the last bracket is chosen because the unimodular component is equivalent to the hyperbolic plane. Substituting it in the volume formula, we get the second statement.
\end{proof}

\subsection*{The proof of theorem 3}
\begin{proof}

We need to find the lower bound for the left hand side of formula (1). $K\geq 8$, $L(s,\chi)=\prod (1-\frac{\chi(p)}{p^s})^{-1}\geq \prod (1+p^{-s})^{-1}=\prod (1-p^{-2s})^{-1}(1-p^{-s})=\zeta(2s)/\zeta(s)$. The first inequality follows from the fact that in each multiple $\chi(p)\in \lbrace \pm 1,\ 0\rbrace$ and  the maximum is reached when $\chi(p)=-1$. So $\frac{d^{3/2}}{2^{k+5}\cdot 3\pi^2} L(2,d)\cdot K  \geq \frac{d^{3/2}}{2^{k+5}\cdot 3\pi^2} \frac{\zeta_Q(4)}{\zeta_Q(2)} \cdot 8=\frac{d^{3/2}}{2^{k+2}\cdot 3^2\cdot 5}$. 
 
Now we need to find the upper bound for the right hand side of formula (1). $Vol(div(F))\leq \frac{1}{2} \frac{1}{24} \sum \frac{p_{i_1}+1}{2}\cdot \ldots \cdot \frac{p_{i_m}+1}{2}$, where the sum is taken over all subsets $\lbrace i_1,\ldots,i_m \rbrace \subseteq \lbrace 1,\ldots, k\rbrace$. We notice that this sum is equal to $(\frac{p_1+1}{2}+1)\cdot \ldots \cdot (\frac{p_k+1}{2}+1)=\frac{1}{2^k}(p_1+3)\cdot \ldots \cdot (p_k+3)$.

So we need to compare $(p_1\cdot \ldots \cdot p_k)^{3/2}$ and $\frac{15}{4}(p_1+3)\cdot \ldots \cdot (p_k+3)$.

Consider the set of k-tuples of primes such that their product is equivalent to 1 mod 4, arranged in the ascending order. We equip this set with a componentwise order. 
The right hand side increases faster than the left hand side as $k$ grows. If the l.h.s. is greater than the r.h.s. for each $p_i$ and for $k=k_0$, then it is true for $k>k_0$. We notice that if for $p_1,\ldots p_m$ the l.h.s. is greater than the r.h.s., then substituting $p_i$ with $p_j>p_i$ preserves the inequality.

Assume $k=4$. The minimal 4-tuple is $3,\ 5,\ 7,\ 13$. The l.h.s. is greater than the r.h.s. for this tuple, so each discriminant $d$, containing at least four prime factors, can't correspond to the free algebra of modular forms. 

Assume $k=3$. For 3-tuples $5,\ 7,\ 11$; $3,\ 7,\ 13$ and 3-tuples, majorizing them, the left hand side is greater than the right hand side. The rest 3-tuples have the form $3,\ 5,\ x$. If $x\geq 19$ then the left hand side is greater than the right hand side. So, when $k=3$ the remaining cases are
$$ d=3\cdot 5 \cdot 7,\ d=3\cdot 5 \cdot 11.$$

Assume $k=2$. For 2-tuples $7,\ 11$; $5,\ 13$ the left hand side is greater than the right hand side. The rest 2-tuples have the form $3,\ x$, and if $x\geq 31$ then the left hand side is greater than right hand side. So, when $k=2$ the remaining cases are
$$ d=3 \cdot 7,\ d=3\cdot 11,\ d=3\cdot 19,\ d=3\cdot 23.$$ 

Assume $k=1$. By lemma 19 the right hand side of formula (1) is computed explicitly. If $p\geq 29$ the l.h.s. is greater than the r.h.s. assuming $K\geq 8$. To simplify the computations we transform the l.h.s. of (1). Since $L(2, p)=\frac{\zeta_{\mathbb{Q}(\sqrt{p})}(2)}{\zeta(2)}$, $\zeta(2)=\frac{\pi^2}{6}$, $\zeta_{\mathbb{Q}(\sqrt{p})}(2) = \frac{4\pi^4\cdot \zeta_{\mathbb{Q}(\sqrt{p})}(-1)}{p^{1.5}}$ (functional equation), we get: $ \frac{p^{3/2}}{2^{1+5}\cdot 3\pi^2} L(2,p)=\frac{p^{3/2}\zeta_{\mathbb{Q}(\sqrt{p})}(2)}{2^{1+5}\cdot 3\pi^2 \zeta(2)}=\frac{\zeta_{\mathbb{Q}(\sqrt{p})}(-1)}{8}$ . Thus, $\frac{\zeta_{\mathbb{Q}(\sqrt{p})}(-1)}{8} \cdot K = \frac{1}{2}\cdot \frac{1}{24}\frac{p+3}{2}$, so $K=\frac{p+3}{12\cdot \zeta_{\mathbb{Q}(\sqrt{p})}(-1)}$.

Going through the rest possibilities with sage we get:
\begin{gather*}
\begin{split}
&P = Primes()\\
&x = P.first()\\
&while\ x<30:\\
&\ \ \ \ K.<a> = QuadraticField(x)\\
&\ \ \ \ Z = K.zeta\_function()\\
&\ \ \ \ if x\%4==1:\\
&\ \ \ \ \ \ \ \ print\ x,\ (x+3)/(12*Z(-1))\\
&\ \ \ \ x=P.next(x)\\
\end{split}
\end{gather*}
This leaves only two possibilities with integer $K\geq 8$:
$$d=5, \ K=20; \ d=13,\ K=8$$

Lemma 11 means that our estimates can be improved:\\

Assume $d=3\cdot 7$, then there is no root $e$ such that $\ e'=7,\ d_e=3 \text{ because } \left( \frac{3}{7}\right) = (3)^{\frac{7-1}{2}}=-1$. Since we considered the volume of $\langle e^\perp \rangle$ in the r.h.s. of (1), it should be reduced by $$\ Vol(\langle e^\perp \rangle)=\frac{1}{24}\cdot \frac{7+1}{2}=\frac{1}{6};$$

Assume $d=3\cdot 11$, then there is no root $e$ such that  $\ e'=3,\ d_e=11 \text{ because } \left( \frac{11}{3}\right) = -1$. Since we considered the volume of $\langle e^\perp \rangle$ in the r.h.s. of (1), it should be reduced by $$Vol(\langle e^\perp \rangle)=\frac{1}{24}\cdot \frac{3+1}{2}=\frac{1}{12};$$

Assume $d=3\cdot 19$, then there is no root $e$ such that  $\ e'=19,\ d_e=3 \text{ because } \left( \frac{3}{19} \right)=(3)^{\frac{19-1}{2}}=-1$. Since we considered the volume of $\langle e^\perp \rangle$ in the r.h.s. of (1), it should be reduced by $$Vol(\langle e^\perp \rangle)=\frac{1}{24}\frac{19+1}{2}=\frac{5}{12};$$

Assume $d=3\cdot 23$, then there is no root $e$ such that $\ e'=3,\ d_e=23 \text{ because } \left( \frac{23}{3}\right) = -1.$ Since we considered the volume of $\langle e^\perp \rangle$ in the r.h.s. of (1),   it should be reduced by $$Vol(\langle e^\perp \rangle)=\frac{1}{24}\cdot \frac{3+1}{2}=\frac{1}{12};$$

Assume $d=3\cdot 5 \cdot 7$, then there are no roots such that $\ e'=3,\ d_{e'}=35;\ e''=3\cdot 5,\ d_{e''}=7; \ e'''=3\cdot 7,\ d_{e'''}=5;\ e''''=5\cdot 7,\ d_{e''''}=3.$ $e'$ doesn't exist, because $\left( \frac{35}{3}\right) = -1$. $e''$ doesn't exist, because $\left( \frac{7}{5}\right) = -1.$ $e'''$ doesn't exist, because $\left( \frac{5}{3}\right) = -1$. $e''''$ doesn't exist, because $\left( \frac{3}{5}\right) = -1$. Since we considered the corresponding volumes in the r.h.s. of (1), it should be reduced by
 $$Vol(\langle e'^\perp \rangle)+Vol(\langle e''^\perp \rangle)+Vol(\langle e'''^\perp\rangle)+Vol(\langle e''''^\perp\rangle)=$$ $$=\frac{1}{24}\left( \frac{3+1}{2}+\frac{3+1}{2}\cdot \frac{5+1}{2}+ \frac{3+1}{2}\cdot \frac{7+1}{2}+\frac{5+1}{2}\frac{7+1}{2}\right)=\frac{7}{6};$$

Assume $d=3\cdot 5\cdot 11$, then there are no roots such that  $\ e'=5,\ d_{e'}=33; \ e''=3\cdot 5,\ d_{e''}=11; \ e'''=3 \cdot 11,\ d_{e'''}=5;\ e''''=5\cdot 11,\ d_{e''''}=3$. $e'$ doesn't exist, because $\left( \frac{33}{5}\right) = -1$. $e''$ doesn't exist, because $\left( \frac{11}{3}\right) = -1$. $e'''$ doesn't exist, because $\left( \frac{5}{3}\right) = -1$. $e''''$ doesn't exist, because $\left( \frac{3}{5}\right) = -1$. Since we considered the corresponding volumes in the r.h.s. of (1), it should be reduced by
$$Vol(\langle e'^\perp\rangle)+Vol(\langle e''^\perp\rangle)+Vol(\langle e'''^\perp\rangle)+Vol(\langle e''''^\perp\rangle)=$$ $$=\frac{1}{24}\left( \frac{5+1}{2}+\frac{3+1}{2}\cdot \frac{5+1}{2}+ \frac{3+1}{2}\cdot \frac{11+1}{2}+\frac{5+1}{2}\cdot \frac{11+1}{2}\right)=\frac{13}{8}.$$

We obtain that for all $d=1\pmod{4}$ except for $d=3\cdot 7$ and $d=3 \cdot 11$ the l.h.s. is greater than the r.h.s., thus the corresponding algebras of modular forms can't be free.

Consider these cases in more details. We want to find the explicit form of $\langle e^\perp \rangle \otimes \mathbb{Z}_p$ for all roots $e$ of $L$. We need to compute Hasse invariant of $\langle e^\perp \rangle \otimes \mathbb{Z}_p$, using that $\epsilon_p(L)=\epsilon_p(\langle e^\perp\rangle)\cdot (det \langle e^\perp \rangle, e^2)_p$. By lemma 1 $\langle e^\perp\rangle \sim U\oplus \langle 2d_e\rangle$ over $\mathbb{Z}_p$ iff they have the same Hasse invariants. It also follows from lemma 1 that this is always true over $\mathbb{Z}_2$, that's why the local density $a_2(e^\perp)$ coincides with the one computed in lemma 18. It follows from the proof of lemma 18 that for the rest $p$ $a_p=2p(1-p^{-2})(1+p)^{-1}$ $\Leftrightarrow$ $\langle e^\perp\rangle \sim U\oplus \langle 2d_e\rangle$ over $\mathbb{Z}_p$, otherwise $a_p=2p(1-p^{-2})(1-p)^{-1}$.\\

Assume $D=33$. We notice that $\epsilon_p(L)=1 \Leftrightarrow p\neq 2$. Then $\epsilon_p(\langle -2^\perp \rangle)=\epsilon_p(\langle -2\cdot 3 \cdot 11^\perp \rangle)=1$ for all $p$, $\epsilon_p(\langle -2\cdot 11^\perp \rangle)=1 \Leftrightarrow p>3$. We get that $\langle -2 \cdot 11^\perp \rangle \sim U\oplus \langle 2\cdot 3\rangle $, $\langle -2 \cdot 3 \cdot 11^\perp \rangle \sim U\oplus \langle 2\rangle $ over $\mathbb{Z}_p$ for all $p$, $\langle -2^\perp \rangle = U\oplus \langle 2\cdot 3\cdot 11\rangle \Leftrightarrow p\neq 3, \ p\neq 11 $. Then $Vol(\langle -2\cdot 3 \cdot 11\rangle^\perp)=\frac{1}{24}$, $Vol(\langle -2 \cdot 11\rangle^\perp)=\frac{1}{24}\cdot \frac{3+1}{2}=\frac{2}{24}$, $Vol(\langle -2 \rangle^\perp)=\frac{1}{24}\cdot \frac{3-1}{2}\cdot \frac{11-1}{2}=\frac{5}{24}$. So the r.h.s. of formula (1) is equal to $\frac{1+2+5}{48}=\frac{1}{6}$. We obtain that the lower bound for the l.h.s. of formula (1) $\frac{33\sqrt{33}}{2^4\cdot 3^2 \cdot 5}>\frac{1}{6}$, so $D=33$ the corresponding algebra of modular forms is not free.\\

Assume $D=21$. We notice that $\epsilon_p(L)=1 \Leftrightarrow p\neq 7$. Then $\epsilon_p(\langle -2^\perp \rangle)=\epsilon_p(\langle -2\cdot 3 \cdot 7^\perp \rangle)=1$ for all $p$, $\epsilon_p(\langle -2\cdot 3^\perp \rangle)=1 \Leftrightarrow p\neq 2, \ p\neq 7$. We get that $\langle -2 \cdot 3^\perp \rangle \sim U\oplus \langle 2\cdot 7\rangle $, $\langle -2 \cdot 3 \cdot 7^\perp \rangle = U\oplus \langle 2\rangle $ over $\mathbb{Z}_p$ for all $p$, $\langle -2^\perp \rangle \sim U\oplus \langle 2\cdot 3\cdot 7\rangle \Leftrightarrow p\neq 3, \ p\neq 7 $. Then $Vol(\langle -2\cdot 3 \cdot 7\rangle^\perp)=\frac{1}{24}$, $Vol(\langle -2 \cdot 3\rangle^\perp)=\frac{1}{24}\cdot \frac{7+1}{2}=\frac{4}{24}$, $Vol(\langle -2 \rangle^\perp)=\frac{1}{24}\cdot \frac{3-1}{2}\cdot \frac{7-1}{2}=\frac{3}{24}$. So the r.h.s. of formula (1) is equal to $\frac{1+4+3}{48}=\frac{1}{6}$. It follows from formula (1) that (1) $K=\frac{2^7\cdot 3\cdot \pi^2}{6\cdot 21\sqrt{21} \cdot L(2,21)}=\frac{2^7 \cdot 3\pi^2}{6\cdot 21\sqrt{21}}\cdot \frac{441}{8\sqrt{21}\cdot \pi^2}=8$. So, if $D=21$ corresponds to the free algebra of modular forms, then $K=8$. It means that this algebra is generated by three forms, each of them has weight 2.
\end{proof}

\section*{Estimation of volumes, d=3 mod 4}
\begin{lemma} Let $e$ be a primitive root of $L$. 
The maximal volume $\langle e^\perp \rangle$ $$Vol_{max}(\langle e^\perp \rangle)=\begin{cases} 
\frac{1}{24}\frac{p_{i_1}+1}{2} \cdot \ldots \cdot \frac{p_{i_m}+1}{2},&\text{if $det(\langle e^\perp \rangle)=-2d_e$;}\\
\frac{1}{16}\frac{p_{i_1}+1}{2} \cdot \ldots \cdot \frac{p_{i_m}+1}{2},&\text{if $det(\langle e^\perp \rangle)=-4d_e$;}\\
\frac{1}{16}\frac{p_{i_1}+1}{2} \cdot \ldots \cdot \frac{p_{i_m}+1}{2},&\text{if $det(\langle e^\perp \rangle)=-8d_e$}
\end{cases}$$
\end{lemma}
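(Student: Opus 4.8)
The plan is to evaluate $Vol(\langle e^\perp\rangle)$ by the same Siegel mass formula already used in the $d\equiv 1\pmod 4$ case and to show that the three prefactors $\tfrac1{24},\tfrac1{16},\tfrac1{16}$ come entirely from the prime $2$, while the common factor $\prod_j\tfrac{p_{i_j}+1}{2}$ is produced by the odd primes exactly as before. Since $\langle e^\perp\rangle$ has signature $(2,1)$, so $n=1$, the archimedean constant of the formula is
$$2\prod_{k=1}^{3}\pi^{-k/2}\Gamma\!\left(\tfrac{k}{2}\right)=\frac{1}{\pi^{2}},$$
whence $Vol(\langle e^\perp\rangle)=\dfrac{|det(\langle e^\perp\rangle)|^{2}}{\pi^{2}}\prod_p a_p^{-1}$, with $a_p$ the local density at $p$.

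First I would handle the odd primes verbatim as in the $d\equiv 1\pmod 4$ computation. For odd $p\nmid d_e$ the form is unimodular, so $a_p=1-p^{-2}$; for $p=p_{i_j}\mid d_e$ one has $a_{p_{i_j}}=2p_{i_j}(1-p_{i_j}^{-2})(1\pm p_{i_j}^{-1})^{-1}$, and since $Vol$ is proportional to $\prod_p a_p^{-1}$, the maximum forces the sign $+$, i.e.\ the local form splits off a hyperbolic plane. Folding in the factor $\zeta(2)=\pi^2/6$ from the remaining good primes gives
$$\prod_p a_p^{-1}=\zeta(2)\,\frac{1-2^{-2}}{a_2}\,\prod_j\frac{p_{i_j}+1}{2p_{i_j}^{2}},$$
which is the identical bookkeeping that produced $\prod_j\tfrac{p_{i_j}+1}{2}$ for $d\equiv1\pmod4$.

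Next I would separate the three cases, which differ only in the power of $2$ dividing the determinant and in $a_2$. Writing $|det|=2^{s}d_e$ with $s=1,2,3$ for the determinants $-2d_e,-4d_e,-8d_e$ and substituting the last display gives
$$Vol_{max}(\langle e^\perp\rangle)=\frac{2^{2s}}{6}\cdot\frac{1-2^{-2}}{a_2}\prod_j\frac{p_{i_j}+1}{2},$$
so everything reduces to the value of $a_2$. The admissible even $2$-adic Jordan forms of $\langle e^\perp\rangle$ in each case are pinned down by Lemmas 1, 2, 3 and 14, and the maximal volume selects the one of smallest $a_2$. For the $-2d_e$ case this is the form $U\oplus\langle 2d_e\rangle$ of Lemma 1 (the $1^{+2}_{II}$ type already met for $d\equiv1\pmod4$), giving $a_2=2^{4}(1-2^{-2})=12$ and prefactor $\tfrac{2^{2}}{6}\cdot\tfrac{3/4}{12}=\tfrac1{24}$; for the $-4d_e$ and $-8d_e$ cases the density formula applied to the split forms $U\oplus\langle 4d_e\rangle$ of Lemma 2 and $U(2)\oplus\langle 2d_e\rangle$ of Lemma 3 yields $a_2=2^{5}$ and $a_2=2^{7}$, giving the prefactor $\tfrac1{16}$ in both cases.

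The main obstacle is precisely this $2$-adic density computation: at odd primes the form is diagonal and the classical formula is immediate, but at $2$ one must work with the even Jordan constituents ($U$, $U(2)$, $\left(\begin{smallmatrix}4&2\\2&4\end{smallmatrix}\right)$ and the scaled unary blocks), tracking oddity, type and scale via the Conway--Sloane algorithm \cite{con}, \cite{con1} and the density recipe of \cite{kit}, to be sure the correct power of $2$ emerges. A secondary point to verify is that, among the finitely many admissible even ternary forms of the prescribed determinant, the chosen $2$-adic type really does minimize $a_2$ (hence maximize the volume); this is a finite comparison using the forms already listed in the structural lemmas.
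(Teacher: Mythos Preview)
Your approach is essentially the same as the paper's: both use the Siegel mass formula, observe that the odd-prime densities are identical to the $d\equiv 1\pmod 4$ case (hence reproduce $\prod_j\frac{p_{i_j}+1}{2}$), and isolate the prefactor by computing $a_2$ in each of the three determinant cases. The only cosmetic difference is that the paper writes $a_2=2^{6}(1-2^{-2})(1\pm 2^{-1})^{-1}$ and $a_2=2^{8}(1-2^{-2})(1\pm 2^{-1})^{-1}$ and then selects the $+$ sign to maximize the volume, whereas you pick the specific hyperbolic-type forms $U\oplus\langle 4d_e\rangle$ and $U(2)\oplus\langle 2d_e\rangle$ and report the already-simplified values $a_2=2^{5}$ and $a_2=2^{7}$; these are the same numbers.
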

\begin{proof}
The first equality follows from lemma 22.\\
The second equality follows from the fact that all local densities are the same as in the first case, except for $a_2=2^6(1-2^{-2})(1\pm 2^{-1})^{-1}$. The volume is estimated from the above, so we choose the sign plus. Substituting into the volume formula from lemma 22, we get the second statement.\\
The third equality also follows from the fact that all densities are the same as in the first case except for $a_2=2^8(1-2^{-2})(1\pm 2^{-1})^{-1}$. As before, we choose plus to get the upper bound and substitute this in the volume formula. The lemma follows.
\end{proof}

The last equality from this lemma can be made more precise:

\begin{lemma}
Let $e$ be a primitive root of $L$ such that $e^2=-2e',\ det(\langle e^\perp\rangle)=-8d_e, \ e'=3 \pmod{4}$. Then the maximal volume of $\langle e^\perp \rangle$ $Vol_{max}(\langle e^\perp \rangle)=\frac{1}{48}\frac{p_{i_1}+1}{2} \cdot \ldots \cdot \frac{p_{i_m}+1}{2}$
\end{lemma}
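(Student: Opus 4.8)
The plan is to revisit the proof of the preceding lemma in the case $\det(\langle e^\perp\rangle)=-8d_e$ and to show that the extra hypothesis $e'\equiv 3\pmod 4$ pins down the sign in the $2$-adic local density $a_2$, which was previously left free and chosen as plus in order to get an upper bound. In that argument every local density $a_p$ with $p\neq 2$ agrees with the one from the first case of the lemma, and maximising those over the odd places (the split choice $(1+p^{-1})^{-1}$) is exactly what produces the factors $\frac{p_{i_1}+1}{2}\cdots\frac{p_{i_m}+1}{2}$; I would keep that maximisation untouched. The only remaining quantity is $a_2=2^8(1-2^{-2})(1\pm 2^{-1})^{-1}$, so the whole task reduces to deciding which sign genuinely occurs here.

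First I would invoke Lemma 14(c): since $e^2=-2e'$, $\det(\langle e^\perp\rangle)=-8d_e$ and $e'\equiv 3\pmod 4$, over $\mathbb{Z}_2$ we have $\langle e^\perp\rangle\sim\begin{pmatrix}4&2\\2&4\end{pmatrix}\oplus\langle -6d_e\rangle$. After factoring out the scale $2$, the binary block is $\begin{pmatrix}2&1\\1&2\end{pmatrix}$, the $A_2$-type form of determinant $3$; the associated form $x^2+xy+y^2$ is anisotropic over $\mathbb{Q}_2$ (one has $-3\equiv 5\pmod 8$, a non-square), so this block is not the hyperbolic plane, in contrast with the $U(2)$ block that appears when $e'\equiv 1\pmod 4$. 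By the sign convention recorded in the local density formula of Lemma 22 (plus when the relevant modular component is hyperbolic, minus otherwise), the minus sign is forced.

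Next I would simply recompute and compare. With the plus sign $(1+2^{-1})^{-1}=\tfrac23$, while with the minus sign $(1-2^{-1})^{-1}=2$, so the minus-sign value of $a_2$ is three times the plus-sign value used in the preceding lemma. Since $Vol(\langle e^\perp\rangle)$ is proportional to $\prod a_p^{-1}$ and every other factor is unchanged, the volume is exactly one third of the bound $\frac{1}{16}\frac{p_{i_1}+1}{2}\cdots\frac{p_{i_m}+1}{2}$ obtained there, i.e. $Vol_{max}(\langle e^\perp\rangle)=\frac{1}{48}\frac{p_{i_1}+1}{2}\cdots\frac{p_{i_m}+1}{2}$, as claimed.

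The main obstacle is precisely the $p=2$ local density: one must justify rigorously that the sign in $a_2$ is controlled by whether the rescaled $2$-adic Jordan block is hyperbolic, and that the $A_2$-type block of Lemma 14(c) yields the minus sign. This is the delicate step because $2$-adic densities are sensitive to the oddity and type data of the Conway--Sloane symbol, unlike the transparent odd-$p$ computation, so it should be checked against the explicit algorithm used to derive the formula in Lemma 22 rather than merely by analogy with the odd case. Once the hyperbolic/anisotropic dichotomy is confirmed, the passage from $\frac{1}{16}$ to $\frac{1}{48}$ via the factor $3$ is immediate.
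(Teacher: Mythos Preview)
Your proposal is correct and follows essentially the same route as the paper: keep the odd-place estimates from the preceding lemma, invoke Lemma 14(c) to identify $\langle e^\perp\rangle\otimes\mathbb{Z}_2$ with $\begin{pmatrix}4&2\\2&4\end{pmatrix}\oplus\langle -6d_e\rangle$, observe that this even block is not the hyperbolic plane so the minus sign is forced in $a_2=2^8(1-2^{-2})(1-2^{-1})^{-1}$, and conclude that the bound drops by a factor of $3$ from $\tfrac{1}{16}$ to $\tfrac{1}{48}$. Your added justification that the $A_2$-type form is anisotropic over $\mathbb{Q}_2$ and your explicit comparison of $(1\pm 2^{-1})^{-1}$ are welcome elaborations, but the argument is the paper's.
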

\begin{proof}
We take the same estimation for all local densities, except for $a_2$, as in the previous lemma. By lemma 14.c) we know the explicit form of $\langle e^\perp \rangle$ over $\mathbb{Z}_2$. In this case the local density $a_2=2^8(1-2^{-2})(1- 2^{-1})^{-1}$. In the last bracket the sign minus is chosen because the even component $\langle e^\perp \rangle$ is not equivalent to the hyperbolic plane. We substitute the local densities into the volume formula and the lemma follows.
\end{proof}

\begin{lemma}
$Vol(O(L)) = \frac{d^{3/2}}{2^{k+3}\cdot 3\pi^2} L(2,4d)$.
\end{lemma}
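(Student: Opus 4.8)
The proof runs parallel to that of lemma 18, the only genuinely new input being the local density at the now-ramified prime $2$. The plan is to apply the Siegel volume formula quoted in the proof of lemma 18,
$$V=2|\det(L)|^{\frac{n+3}{2}}\prod_{k=1}^{n+2}\pi^{-k/2}\Gamma\left(\frac{k}{2}\right)\cdot\prod_p a_p^{-1},$$
with $n=2$ and $|\det(L)|=4d$ (the Gram matrix $U\oplus\begin{pmatrix}-2&0\\0&2d\end{pmatrix}$ has determinant $4d$). The $\Gamma$-factor product evaluates to $\prod_{k=1}^{4}\pi^{-k/2}\Gamma(k/2)=\frac{1}{2\pi^4}$, so that
$$V=\frac{(4d)^{5/2}}{\pi^4}\prod_p a_p^{-1}=\frac{2^5 d^{5/2}}{\pi^4}\prod_p a_p^{-1},$$
and everything reduces to computing the local densities.

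Next I would record the odd-prime densities, which are identical to those in lemma 18. For $p\nmid 2d$ the lattice $L\otimes\mathbb{Z}_p$ is unimodular of rank $4$ and determinant $4d$, so $a_p=(1-p^{-2})(1-\chi_{4d}(p)p^{-2})$, where $\chi_{4d}$ is the quadratic character attached to $\mathbb{Q}(\sqrt d)$ and $\chi_{4d}(p)=\left(\frac{d}{p}\right)$ for odd $p$. For an odd prime $p_i\mid d$ the Jordan splitting of $L\otimes\mathbb{Z}_{p_i}$ consists of a unimodular block of rank $3$ and a $p_i$-modular block of rank $1$, giving $a_{p_i}=2p_i(1-p_i^{-2})$.

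The only new computation is at $p=2$. Since $d\equiv 3\pmod 4$ the prime $2$ is ramified, and over $\mathbb{Z}_2$ the Jordan decomposition is $L\otimes\mathbb{Z}_2=U\oplus\langle-2\rangle\oplus\langle 2d\rangle$, i.e. a $2$-adically unimodular even block $U$ together with a $2$-modular block $\langle-2\rangle\oplus\langle 2d\rangle$ of rank $2$. Running the $2$-adic local density algorithm of \cite{con}, \cite{con1} (see also \cite{kit}) on this Jordan type, I expect $a_2=2^7(1-2^{-2})$; in particular, unlike the unramified case of lemma 18, there is no character factor at $2$, which is consistent with the Euler factor of $L(2,4d)$ at $2$ being trivial. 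This is the step I expect to be the main obstacle: the $2$-adic density must be evaluated with care for the correct Jordan type (even unimodular part plus rank-$2$ scaled part), since any slip there changes the power of $2$ in the final constant.

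Finally I would assemble the Euler product. Writing $\prod_p a_p^{-1}=a_2^{-1}\prod_{p_i\mid d}a_{p_i}^{-1}\prod_{p\nmid 2d}a_p^{-1}$, I collect all factors $(1-p^{-2})^{-1}$ over every prime into $\zeta(2)=\pi^2/6$, and all character factors $(1-\chi_{4d}(p)p^{-2})^{-1}$ into $L(2,4d)$ (the ramified primes $2$ and $p_i\mid d$ contribute the trivial Euler factor $1$, so they may be dropped). The remaining constant is $2^{-7}\prod_{p_i\mid d}(2p_i)^{-1}=2^{-(k+7)}d^{-1}$, using that $d$ is squarefree with $k$ prime factors. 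Substituting everything back gives
$$V=\frac{2^5 d^{5/2}}{\pi^4}\cdot 2^{-(k+7)}d^{-1}\cdot\frac{\pi^2}{6}\cdot L(2,4d)=\frac{d^{3/2}}{2^{k+3}\cdot 3\pi^2}L(2,4d),$$
which is the assertion of the lemma.
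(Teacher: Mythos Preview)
Your argument is correct and follows the same approach as the paper: compute the local densities $a_2=2^7(1-2^{-2})$, $a_{p_i}=2p_i(1-p_i^{-2})$ for $p_i\mid d$, and $a_p=(1-p^{-2})(1-\chi_{4d}(p)p^{-2})$ for $p\nmid 2d$, then substitute into the Siegel volume formula. Your write-up is in fact more detailed than the paper's, which simply lists the three densities and says ``substitute''; your explicit assembly of the Euler product into $\zeta(2)\cdot L(2,4d)$ and the bookkeeping of the remaining constant $2^{-(k+7)}d^{-1}$ is a welcome check. One cosmetic point: the volume formula and the parallel computation for $d\equiv 1\pmod 4$ appear in lemmas~22 and~23 of the paper, not lemma~18, so you should correct the cross-references.
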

\begin{proof}
The local densities are:\\
$a_2=2^7(1-2^{-2})$\\
$a_{p_i}=2p_i (1-p_i^{-2})$, where $p_i$ is the divisor of $d$\\
$a_p=(1-p^{-2})(1-\left( \frac{4d}{2}\right) p^{-2})$\\
We substitute this into the volume formula and the lemma follows.
\end{proof}

\subsection*{The proof of theorem 4}
\begin{proof}

We need to find the lower bound for the left hand side of formula (1) like in the proof of theorem 3.
$K\geq 8$, $L(s,\chi)\geq \zeta(2s)/\zeta(s)$. That's why $\frac{d^{3/2}}{2^{k+3}\cdot 3\pi^2} L(2,4d)\cdot K  \geq \frac{d^{3/2}}{2^{k+3}\cdot 3\pi^2} \frac{\zeta_Q(4)}{\zeta_Q(2)} \cdot 8=\frac{d^{3/2}}{2^{k}\cdot 3^2\cdot 5}$.

We need to find the upper bound for the left hand side of formula (1): $\leq \frac{1}{2}(\frac{1}{24}+\frac{1}{16}+\frac{1}{16})\frac{1}{2^k}(p_1+3)\cdot \ldots \cdot (p_k+3) = \frac{1}{12}\frac{1}{2^k}(p_1+3)\cdot \ldots \cdot (p_k+3)$.

So, we need to compare $(p_1\cdot \ldots \cdot p_k)^{3/2}$ and $\frac{15}{4}(p_1+3)\cdot \ldots \cdot (p_k+3)$. \\

Consider the set of k-tuples of primes such that their product is equivalent to 3 mod 4, arranged in the ascending order. We equip this set with a componentwise order like in the proof of theorem 3. 

Assume $k=4$. The minimal 4-tuple is $3,\ 5,\ 7,\ 11$. For this tuple the l.h.s. is greater than the r.h.s., so each $d$ with at least 4 prime divisors can't correspond to free algebra of modular forms.

Assume $k=3$. For 3-tuples $3,\ 7,\ x$ and 3-tuples, majorizing them, the l.h.s. is greater than the r.h.s. The remaining 3-tuples have the form $3,\ 5,\ x$, where $x<15$ (otherwise the l.h.s. is greater than the r.h.s.). So, there is only one remaining case:
$$ d=3\cdot 5 \cdot 13$$

Assume $k=2$. For 2-tuples which have the form $7,\ x$ the l.h.s. is greater than the r.h.s. The other cases have the form $3,\ x$; $5,\ y$; $x\leq 23,\ y\leq 11$. So, when $k=2$ the remaining cases are
$$ d=3\cdot 5,\ d=3\cdot 13,\ d=3\cdot 17,\ d=5\cdot 7,\ d=5\cdot 11.$$

Assume $k=1$. When $p\geq 19$ the l.h.s. is greater than the r.h.s., so the remaining cases are $3$, $7$, $11$. 

Lemma 16 means that our estimates can be improved:

Assume $d=3\cdot 5 \cdot 13$, then there are no roots $e$ such that $e'=3,\ e''=3\cdot 5,\ e'''=3\cdot 13$. Since we considered their volumes in the r.h.s. of (1), it should be reduced by $$Vol(\langle e'^\perp \rangle)+Vol(\langle e''^\perp \rangle)+Vol(\langle e'''^\perp \rangle)=$$ $$=\frac{1}{24}\left(\frac{3+1}{2} + \frac{3+1}{2} \cdot \frac{5+1}{2} + \frac{13+1}{2} \cdot \frac{3+1}{2}\right)=\frac{11}{12};$$

Assume $d=3\cdot 5$, then there are no roots $e$ such that $e'=3,\ e''=3\cdot 5$. Since we considered their volumes in the r.h.s. of (1), it should be reduced by $$Vol(\langle e'^\perp \rangle)+Vol(\langle e''^\perp \rangle)=\frac{1}{24}\left( \frac{3+1}{2} + \frac{3+1}{2}\cdot \frac{5+1}{2}\right)=\frac{1}{3};$$

Assume $d=3\cdot 13$, then there are no roots $e$ such that $e'=3,\ e''=3\cdot 13$. Since we considered their volumes in the r.h.s. of (1), it should be reduced by  $$Vol(\langle e'^\perp \rangle)+Vol(\langle e''^\perp \rangle)=\frac{1}{24}\left( \frac{3+1}{2} + \frac{3+1}{2}\cdot \frac{13+1}{2}\right)=\frac{2}{3};$$

Assume $d=3\cdot 17$, then there are no roots $e$ such that  $e'=3,\ e''=3\cdot 17$. Since we considered their volumes in the r.h.s. of (1), it should be reduced by $$Vol(\langle e'^\perp \rangle)+Vol(\langle e''^\perp \rangle)=\frac{1}{24}\left( \frac{3+1}{2} + \frac{3+1}{2}\cdot \frac{17+1}{2}\right)=\frac{5}{6};$$

Assume $d=5\cdot 7$, then there are no roots $e$ such that $e'=7,\ e''=5\cdot 7$. Since we considered their volumes in the r.h.s. of (1), it should be reduced by $$Vol(\langle e'^\perp \rangle)+Vol(\langle e''^\perp \rangle)=\frac{1}{24}\left( \frac{7+1}{2} + \frac{7+1}{2}\cdot \frac{5+1}{2}\right)=\frac{2}{3};$$

Assume $d=5\cdot 11$, then there are no roots $e$ such that $e'=11,\ e''=5\cdot 11$. Since we considered their volumes in the r.h.s. of (1), it should be reduced by $$Vol(\langle e'^\perp \rangle)+Vol(\langle e''^\perp \rangle)=\frac{1}{24}\left( \frac{11+1}{2} + \frac{11+1}{2}\cdot \frac{5+1}{2}\right)=1;$$

Assume $d=3$, then there is no root $e$ such that $e'=3$. Since we considered this volume in the r.h.s. of (1), it should be reduced by $$Vol(\langle e'^\perp \rangle)=\frac{1}{24} \frac{3+1}{2}=\frac{1}{12};$$

Assume $d=7$, then there is no root $e$ such that $e'=7$. Since we considered this volume in the r.h.s. of (1), it should be reduced by $$Vol(\langle e'^\perp \rangle)=\frac{1}{24}\frac{7+1}{2}=\frac{1}{6};$$

Assume $d=11$, then there is no root $e$ such that $e'=11$. Since we considered this volume in the r.h.s. of (1), it should be reduced by $$Vol(\langle e'^\perp \rangle)=\frac{1}{24}\frac{11+1}{2}=\frac{1}{4}.$$

We notice that when $det \langle e^\perp \rangle= -8d_e$, $e=-2e'$, $e'=3 \pmod{4}$ we use the estimate for $Vol_{max}(\langle e^\perp \rangle)$ from lemma 20. It follows from lemma 21 that in fact such volumes are at least three times less that our estimate, so we should also subtract from the r.h.s. $\frac{2}{3}Vol_{max}(\langle e^\perp \rangle)=\frac{2}{3}\frac{1}{16}\frac{p_{i_1}+1}{2} \cdot \ldots \cdot \frac{p_{i_m}+1}{2}=\frac{1}{24}\frac{p_{i_1}+1}{2} \cdot \ldots \cdot \frac{p_{i_m}+1}{2}$
for each such root $e$. These roots and corresponding volumes coincide with the ones that we have just computed.

Subtracting this from the r.h.s. we obtain that for all discriminants except $d=3\cdot 5$, $d=3$, $d=7$ the l.h.s. of formula (1) is greater than the r.h.s., that's why the corresponding algebras of modular forms can't be free.

Consider these cases in more details using lemma 17. 

Assume $d=15$, then there are no roots $e=-2e'$ such that $e'=5$ (because $3t^2=-1 \pmod{5}$ is not solvable) and $e'=15$ (because $t^2=-1 \pmod{3}$ is not solvable). Besides, there are no roots $e=-4e'$ such that $e'=3$ (because $5t^2=-2 \pmod{3}$ is not solvable) and $e'=15$ (because $t^2=-2\pmod{5}$ is not solvable). So, the r.h.s. of formula (1) is equal to $Vol(\langle-4^\perp\rangle)+Vol(\langle-4\cdot 5\perp\rangle)+Vol(\langle-2\cdot 3^\perp\rangle,det\langle-2\cdot 3\perp\rangle=-8\cdot 5)+Vol(\langle-2^\perp\rangle,det\langle-2\perp\rangle=-2\cdot 3\cdot 5)+Vol(\langle-2^\perp\rangle,det\langle-2\perp\rangle=-8\cdot 3\cdot 5)$ $ =\frac{1}{16}+\frac{1}{16}\cdot \frac{5+1}{2}+\frac{1}{48}\cdot \frac{3+1}{2}+\frac{1}{24}+\frac{1}{16}=\frac{19}{48}$. Comparing it with the estimate for the l.h.s. we get that the corresponding algebra of modular forms can't be free.

Assume $d=7$, then there are no roots $e=-2e'$ such that $e'=7$ (because $t^2=-1 \pmod{7}$ is not solvable). Besides, there are no roots $e=-4e'$ such that $e'=7$ (because $t^2=-2 \pmod{7}$ is not solvable). So, the r.h.s. of formula (1) is equal to $Vol(\langle-4^\perp\rangle)+Vol(\langle-2^\perp\rangle,det\langle-2\perp\rangle=-2\cdot 7)+Vol(\langle-2^\perp\rangle,det\langle-2\perp\rangle=-8\cdot 7)$ $ =\frac{1}{16}+\frac{1}{24}+\frac{1}{16}=\frac{1}{6}$. Comparing it with the estimate for the l.h.s. we get that the corresponding algebra of modular forms can't be free.

Assume $d=3$, then there are no roots $e=-2e'$ such that $e'=3$ (because $t^2=-1 \pmod{3}$ is not solvable). So the r.h.s. of formula (1) is equal to $Vol(\langle-4\cdot 3^\perp\rangle)+Vol(\langle-4^\perp\rangle)+Vol(\langle-2^\perp\rangle,det\langle-2\perp\rangle=-2\cdot 3)+Vol(\langle-2^\perp\rangle,det\langle-2\perp\rangle=-8\cdot 3)$ $ =\frac{1}{16}\cdot \frac{3+1}{2}+\frac{1}{16}+\frac{1}{24}+\frac{1}{16}=\frac{7}{24}$. Computing the $K$, we get $K=14$.
\end{proof}

\section*{Estimation of volumes, d=2 mod 4}
\begin{lemma} Let $e$ be a primitive root of $L$. 
The maximal volume $\langle e^\perp \rangle$ $$Vol_{max}(\langle e^\perp \rangle)=\begin{cases} 
\frac{1}{24}\frac{p_{i_1}+1}{2} \cdot \ldots \cdot \frac{p_{i_m}+1}{2},&\text{if $det(\langle e^\perp \rangle)=-2d_e$;}\\
\frac{1}{16}\frac{p_{i_1}+1}{2} \cdot \ldots \cdot \frac{p_{i_m}+1}{2},&\text{if $det(\langle e^\perp \rangle)=-4d_e$;}\\
\frac{1}{16}\frac{p_{i_1}+1}{2} \cdot \ldots \cdot \frac{p_{i_m}+1}{2},&\text{if $det(\langle e^\perp \rangle)=-8d_e$}\\
\frac{1}{16}\frac{p_{i_1}+1}{2} \cdot \ldots \cdot \frac{p_{i_m}+1}{2},&\text{if $det(\langle e^\perp \rangle)=-16d_e$}
\end{cases}$$
\end{lemma}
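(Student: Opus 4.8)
The plan is to follow the template already used in the proofs of Lemmas 22 and 20, computing $Vol_{max}(\langle e^\perp\rangle)$ from the Siegel mass formula
$$V=2|\det(\langle e^\perp\rangle)|^{\frac{n+3}{2}}\prod_{k=1}^{n+2}\pi^{-k/2}\Gamma\!\left(\tfrac{k}{2}\right)\cdot\prod_p a_p^{-1},\qquad n=1,$$
where the $a_p$ are the local densities of $\langle e^\perp\rangle$. Since $\langle e^\perp\rangle$ has signature $(2,1)$ in every one of the four cases, the Gamma-part $\prod_{k=1}^{3}\pi^{-k/2}\Gamma(k/2)=\frac{1}{2\pi^2}$ is a fixed constant, so the whole computation reduces to assembling the local densities and substituting them in.

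First I would dispose of the odd primes, which behave exactly as in the proof of Lemma 22 and are completely insensitive to the fourfold case distinction. For a prime $p\neq 2$ not dividing $\det\langle e^\perp\rangle$ the lattice is unimodular and $a_p=(1-p^{-2})$; for each odd prime $p_{i_j}$ dividing the determinant one has $a_{p_{i_j}}=2p_{i_j}(1-p_{i_j}^{-2})(1\pm p_{i_j}^{-1})^{-1}$, and since the volume is obtained by inverting the densities, the maximum is attained with the plus sign. Weighing the gain in the $|\det|^2$-factor against these densities shows that each such prime contributes precisely the factor $\frac{p_{i_j}+1}{2}$ to the volume, uniformly in all four cases. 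This reproduces the common product $\frac{p_{i_1}+1}{2}\cdots\frac{p_{i_m}+1}{2}$ appearing in the statement.

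Hence the only case-dependent ingredient is the $2$-adic density $a_2$, which depends solely on $v_2(\det\langle e^\perp\rangle)\in\{1,2,3,4\}$. Using the $2$-adic Jordan structure of $\langle e^\perp\rangle$ furnished by the classification of the roots of $L$ established above together with Lemmas 1--4, I would compute, exactly as in Lemma 20,
$$a_2=\begin{cases}2^4(1-2^{-2}), & \det=-2d_e;\\ 2^6(1-2^{-2})(1\pm 2^{-1})^{-1}, & \det=-4d_e;\\ 2^8(1-2^{-2})(1\pm 2^{-1})^{-1}, & \det=-8d_e;\\ 2^{10}(1-2^{-2})(1\pm 2^{-1})^{-1}, & \det=-16d_e,\end{cases}$$
choosing the plus sign throughout to obtain the upper bound. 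Substituting each of these together with the odd-prime contributions into the volume formula yields the constant $\frac{1}{24}$ in the first case and $\frac{1}{16}$ in the remaining three, which is exactly the claimed value of $Vol_{max}(\langle e^\perp\rangle)$.

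The hard part will be justifying the $2$-adic densities rigorously, and in particular the genuinely new case $\det=-16d_e$ with $a_2=2^{10}(1-2^{-2})(1\pm 2^{-1})^{-1}$: one must pin down the $2$-adic Jordan decomposition of $\langle e^\perp\rangle$ (distinguishing the doubled-even and doubled-odd possibilities of Lemma 4) and confirm that the plus sign is indeed the maximizing choice before taking it. Everything else is the same routine substitution already carried out in Lemmas 22 and 20, so once the four values of $a_2$ are secured the result follows immediately.
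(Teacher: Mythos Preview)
Your treatment of the first three cases is exactly the paper's: it simply cites Lemma~25 (the $d\equiv 3\pmod 4$ volume lemma), whose proof is the very computation you reproduce, so there is nothing to add there.

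The fourth case $\det=-16d_e$ is where your sketch and the paper diverge. The paper does not try to fit this case into the pattern $a_2=2^{10}(1-2^{-2})(1\pm 2^{-1})^{-1}$ at all. Instead it uses something you already have available but did not invoke: in the proof of Lemma~20(d) it was shown that the doubled-even possibility $U(2)\oplus\langle 4d_e\rangle$ admits no glueing vector of integer length, so over $\mathbb{Z}_2$ the lattice $\langle e^\perp\rangle$ must be the doubled-odd form $\langle 2\epsilon_1\rangle\oplus\langle 2\epsilon_2\rangle\oplus\langle -4\epsilon_1\epsilon_2 d_e\rangle$. For that specific Jordan shape the paper simply records $a_2=2^9$ and substitutes. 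Your extrapolated expression happens to give $2^9$ when you pick the plus sign, but its structural form --- a $(1-2^{-2})$ factor coming from a type~II rank-$2$ block and a $(1\pm 2^{-1})^{-1}$ coming from the sign of that block --- matches the doubled-even lattice, not the doubled-odd one that actually occurs; the numerical coincidence would still need an honest density computation for $\langle 2\epsilon_1\rangle\oplus\langle 2\epsilon_2\rangle\oplus\langle -4\epsilon_1\epsilon_2 d_e\rangle$ to be justified. So the ``hard part'' you anticipate is not a $\pm$ maximization but rather: cite Lemma~20(d) to exclude the doubled-even case outright, then compute $a_2$ for the single remaining $2$-adic form.
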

\begin{proof}
The first three equalities follow from lemma 25.\\
To prove the last equality we notice that all the local densities except for $a_2$ are the same as in the first case. Since the lattice is $\langle 2\epsilon_1\rangle \oplus \langle 2\epsilon_2\rangle \oplus \langle -4\epsilon_1 \epsilon_2 d_e \rangle$, $a_2=2^9$. Substituting it into the volume formula, we get the statement of the lemma.
\end{proof}

We notice that if a root $e$ is such that $det(\langle e^\perp \rangle)=-8d_e$, there are two possibilities. Either $\langle e^\perp \rangle \cong U(2) \oplus \langle 2d_e\rangle$, then by lemma 20 there exists the unique glueing vector. Or $\langle e^\perp \rangle \cong \begin{pmatrix}
4 & 2  \\
2 & 4 \\
\end{pmatrix} \oplus \langle -6d_e \rangle$, then by lemma 20 there exist up to 3 glueing vectors, but by lemma 26 the volume of  $\langle e^\perp \rangle$ is at least three times less than in the first case. That's why the upper bound for the contribution of these summands into the r.h.s. of (1) is the same and coincides with the one stated in the previous lemma.

\begin{lemma}
$Vol(O(L)) = \frac{(2d)^{3/2}}{2^{k+3}\cdot 3\pi^2} L(2,8d')$.
\end{lemma}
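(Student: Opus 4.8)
The plan is to evaluate the Siegel volume formula of Lemma 22 for $L$, exactly as in the proofs of Lemmas 18 and 25. I would work with the model $L=U\oplus\langle-2\rangle\oplus\langle 4d'\rangle$ coming from the coordinates of Lemma 23 (with $d=2d'$, $d'$ odd and squarefree), so that the signature is $(2,2)$, i.e. $n=2$, and $\det L=8d'=4d$. Recording the archimedean factor $\prod_{j=1}^{4}\pi^{-j/2}\Gamma(j/2)=\tfrac12\pi^{-4}$, the formula collapses to $V=(8d')^{5/2}\pi^{-4}\prod_p a_p^{-1}$, so everything comes down to the local densities $a_p$.

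For the odd primes the computation is routine and mirrors the earlier lemmas. If $p\nmid 2d'$ then $L\otimes\mathbb Z_p$ is unimodular and $a_p=(1-p^{-2})\bigl(1-\bigl(\tfrac{8d'}{p}\bigr)p^{-2}\bigr)$, while for an odd $p\mid d'$ exactly one Jordan block acquires scale $p$ and $a_p=2p(1-p^{-2})$. I would then assemble the Euler products: $\prod_{p\neq 2}(1-p^{-2})^{-1}=\zeta(2)(1-2^{-2})=\pi^2/8$; the twisted product $\prod_{p\nmid 2d'}\bigl(1-\bigl(\tfrac{8d'}{p}\bigr)p^{-2}\bigr)^{-1}$ completes to $L(2,8d')$ because the Kronecker character of $\mathbb Q(\sqrt d)$ vanishes at every ramified prime; and $\prod_{p\mid d'}(2p)^{-1}=(2^{\omega}d')^{-1}$, where $\omega$ is the number of odd prime divisors of $d$.

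The crux, and the main obstacle, is the $2$-adic density, since here $2$ ramifies (it divides $\det L$ to the odd power $3$). Over $\mathbb Z_2$ the lattice splits as $U$ at scale $1$ together with the two rank-one blocks $\langle-2\rangle$ and $\langle 4d'\rangle$ at scales $2$ and $4$. Because these two Jordan components have odd rank, their Gauss-sum contributions produce a half-integer power of $2$, and I expect the careful evaluation (via the same Conway--Sloane $2$-adic bookkeeping used in Lemmas 1--4) to give $a_2=2^{15/2}(1-2^{-2})$; note that only a single factor $(1-2^{-2})$ survives, the character factor $(1-(\tfrac{d}{2})2^{-2})$ present in the unramified case of Lemma 18 degenerating to $1$. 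This is the one place where the analysis genuinely differs from Lemmas 18 and 25, and the $\sqrt2$ hidden in $2^{15/2}$ is precisely what lets the final answer be written with the numerator $(2d)^{3/2}=(4d')^{3/2}=8\,(d')^{3/2}$.

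Finally I would substitute and collect constants. Using $a_2^{-1}=2^{-15/2}\cdot\tfrac43$ one finds $V=\dfrac{(d')^{3/2}}{3\cdot 2^{\omega+1}}\,\pi^{-2}L(2,8d')$, after the factors $(d')^{5/2}/d'=(d')^{3/2}$ and $\pi^{-4}\cdot\pi^2=\pi^{-2}$ combine. Since $(2d)^{3/2}=8(d')^{3/2}$ and $k=\omega+1$ is the number of prime divisors of $d$, this is exactly $Vol(O(L))=\dfrac{(2d)^{3/2}}{2^{k+3}\cdot 3\pi^2}\,L(2,8d')$, as claimed.
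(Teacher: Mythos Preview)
Your overall approach matches the paper's: plug the local densities $a_p$ into the Siegel volume formula of Lemma~22 and collect the Euler products into $\zeta(2)$ and $L(2,8d')$. Your treatment of the odd primes is fine and agrees with the paper. The problem is entirely at $p=2$.

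The claimed value $a_2=2^{15/2}(1-2^{-2})$ cannot be right: the local representation density of a lattice by itself is a rational number (it is a limit of normalized counts of solutions to congruences), so a half--integral power of $2$ is impossible. There is no ``$\sqrt{2}$ hidden'' in $a_2$; the half--power of $2$ in the final formula comes from $|\det L|^{5/2}=(8d')^{5/2}=2^{15/2}(d')^{5/2}$, not from the $2$-adic density. The paper simply records
\[
a_2=2^{8}(1-2^{-2}),
\]
obtained from the Jordan splitting $U\oplus\langle-2\rangle\oplus\langle 4d'\rangle$ at scales $1,2,4$ (compare Lemma~27, where scales $1,2$ give $a_2=2^{7}(1-2^{-2})$; the extra scale $4$ block contributes one more factor of $2$). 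Substituting this together with your odd densities gives
\[
Vol(O(L))=\frac{(2d')^{3/2}}{2^{k+3}\cdot 3\pi^{2}}\,L(2,8d'),
\]
with $k$ equal to the number of \emph{odd} prime divisors of $d$. This is the convention of this section of the paper (where $d'=p_1\cdots p_k$), and it is exactly the expression used in the proof of Theorem~5, which also checks the case $k=0$, $D=8$. The ``$(2d)^{3/2}$'' in the displayed statement is a typo for ``$(2d')^{3/2}$'' (equivalently $d^{3/2}$); you effectively reverse--engineered $a_2$ and redefined $k$ to hit that typo, but the resulting $a_2$ is not a valid local density.
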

\begin{proof}
The local densities are:\\
$a_2=2^8(1-2^{-2})$\\
$a_{p_i}=2p_i (1-p_i^{-2})$, where $p_i|d$\\
$a_p=(1-p^{-2})(1-\left( \frac{8d'}{2}\right) p^{-2})$\\
Substituting it into the volume formula, we get the statement.
\end{proof}
\subsection*{The proof of theorem 5}
\begin{proof}

We need to find the lower bound for the left hand side of formula (1) like in the proof of theorem 3. $K\geq 8$, $L(s,\chi)\geq \zeta(2s)/\zeta(s)$. That's why $\frac{(2d')^{3/2}}{2^{k+3}\cdot 3\pi^2} L(2,8d')\cdot K  \geq \frac{(2d')^{3/2}}{2^{k+3}\cdot 3\pi^2} \frac{\zeta_Q(4)}{\zeta_Q(2)} \cdot 8=\frac{(2d')^{3/2}}{2^{k}\cdot 3^2\cdot 5}$.

We need to find the upper bound for the left hand side of formula (1): $\leq \frac{1}{2}(\frac{1}{24}+\frac{1}{16}+\frac{1}{16}+\frac{2}{16})\frac{1}{2^k}(p_1+3)\cdot \ldots \cdot (p_k+3) = \frac{7}{48}\frac{1}{2^k}(p_1+3)\cdot \ldots \cdot (p_k+3)$.

So, we need to compare $(2p_1\cdot \ldots \cdot p_k)^{3/2}$ and $\frac{105}{16}(p_1+3)\cdot \ldots \cdot (p_k+3)$. 

Consider the set of k-tuples of odd primes arranged in the ascending order. We equip this set with a componentwise order like in the proof of theorem 3. The minimal tuple for $k=4$ is $3,\ 5,\ 7,\ 11$. For this tuple the l.h.s. is greater than the r.h.s., so each $d'$ with at least 4 prime divisors can't correspond to free algebra of modular forms.

Assume $k=3$. For $3,\ 5,\ 11$ and 3-tuples, majorizing it, the l.h.s. is greater than the r.h.s. The only remaining 3-tuples is $3,\ 5,\ 7$. $L(2,8\cdot 3 \cdot 5 \cdot 7)=\frac{31}{4410}\sqrt{210}\pi^2$, and substituting into the formula (1) the exact value instead of an estimate, we get that this case is impossible.

Assume $k=2$. For 2-tuples $3,y,\ y\geq 13$ and $5,\ x$  and 2-tuples, majorizing them, the l.h.s. is greater than the r.h.s. The rest tuples have the form $3,\ x$, where $x\leq 11$. $L(2,8\cdot 3 \cdot 5)=\frac{17}{900}\sqrt{30}\pi^2$, $L(2,8\cdot 3 \cdot 7)=\frac{3}{196}\sqrt{42}\pi^2$, $L(2,8\cdot 3 \cdot 11)=\frac{14}{1089}\sqrt{66}\pi^2$, substituting the exact values into the r.h.s. of (1), we get that the cases $3\cdot 7$ and $3\cdot 11$ are impossible. So the remaining 2-tuple is $ d'=3\cdot 5$

Assume $k=1$. When $p\geq 11$ the l.h.s. is greater than the r.h.s. so the remaining primes are $3$, $5$, $7$. $L(2,8\cdot 3)=\frac{1}{24}\sqrt{6}\pi^2$, $L(2,8\cdot 5)=\frac{7}{200}\sqrt{10}\pi^2$, $L(2,8\cdot 7)=\frac{5}{196}\sqrt{14}\pi^2$. Substituting these into the r.h.s. of (1) we get that $d'=7$ is impossible.

Assume $k=0$, it corresponds to the case $D=8$. $L(2,8)=\frac{1}{16}\sqrt{2}\pi^2$. Substituting this into the r.h.s. of (1) we get that the equality is possible if $K=14$.

We consider the rest cases in more details applying lemma 21.

Assume $d'=3\cdot 5$, then there are no roots $e=-2e'$ such that $e'=3,\ e''=3\cdot 5$, because $10t^2=-1\pmod{3},\ 2t^2=-1\pmod{5}$ is not solvable. Besides, there are no roots $e=-4e'$ such that $e'''=5,\ e''''=3\cdot 5$, because $3t^2=-1\pmod{5},\ t^2=-1\pmod{3}$ is not solvable. Since we considered this volume in the r.h.s. of (1), it should be reduced by $$ Vol(\langle e'^\perp \rangle)+Vol(\langle e''^\perp \rangle)+Vol(\langle e'''^\perp \rangle)+Vol(\langle e''''^\perp \rangle)=$$ $$\frac{3}{16}\left( \frac{3+1}{2} + \frac{3+1}{2}\cdot \frac{5+1}{2}\right) + \left(\frac{1}{16}+\frac{1}{24}\right)\left( \frac{5+1}{2} + \frac{3+1}{2}\cdot \frac{5+1}{2}\right)=\frac{39}{32};$$

Assume $d'=5$, then there is no root $e=-2e'$ such that  $e'=5$, because $2t^2=-1 \pmod{5}$ is not solvable. Since we considered this volume in the r.h.s. of (1), it should be reduced by  $$ Vol(\langle e'^\perp \rangle)= \frac{3}{16}\cdot \frac{5+1}{2}=\frac{9}{32};$$

Assume $d'=3$, then there is no root $e=-4e'$ such that $e'=3$, because $t^2=-1 \pmod{3}$ is not solvable. Since we considered this volume in the r.h.s. of (1), it should be reduced by $$ Vol(\langle e'^\perp \rangle)= \left( \frac{1}{24} + \frac{1}{16} \right)\frac{3+1}{2}=\frac{5}{48};$$

Subtracting this from the r.h.s. we obtain that for all discriminants except $d'=3$ and $d'=1$ the l.h.s. of formula (1) is greater than the r.h.s., that's why the corresponding algebras of modular forms can't be free. If the algebra corresponding to $d'=3$ if free, then $K\leq 10$. If the algebra corresponding to $d'=1$ if free, then $K=14$.

\end{proof}

\section*{Conclusion}
We compare our results with the known ones.

Assume $D=5$, then the corresponding algebra of symmetric modular forms of even weight is free \cite{gun}. The weights of the generators are 2, 6, 10, so $K=2+6+10+2=20$, and that agrees with our computations.

Assume $D=8$, then the corresponding algebra of symmetric modular forms of even weight is free \cite{mul}.  The weights of the generators are 2, 4, 6, so $K=2+4+6+2=14$, and that agrees with our computations.

Assume $D=13$, then \cite{geer-zagier} the corresponding algebra of symmetric modular forms of even weight is $\mathbb{C}[A,B,C,C']/\langle R \rangle$, where the weights of the generators are 2, 4, 6, 6. It means that $\dim M_2^{sym}=1<3$, so $K>8$. So this algebra is not free.

Assume $D=24$, then it follows from $K\leq 10$ that $\dim M_2 \leq 2$, because the weights of the generators must be 2, 2, 2 or 2, 2, 4. By theorem 5 from \cite{geer1} the algebra of modular forms of even weights is generated by generators with weights  2, 4, 4, 6, 6, 6, 6, 6, 8, 8, 8, 8 and 10 (mod some relations). So, $\dim M_{2}=1$, and it means that the corresponding algebra of symmetric modular forms of even weights can't be free.

Assume $D=21$. Since $K=8$, for the algebra of symmetric modular forms of even weights to be free it is necessary that $\dim M_{2k}^{sym}= 3$. Since $\dim M_{2k}^{sym}\leq \dim M_{2k}$, it suffices to show that $\dim M_{2k}\leq 2$, to obtain that this algebra is not free.
In\cite{geer} we can find the dimension formula for the space of parabolic forms  $S_{2k}(\Gamma)$ of weight $2k$. 
$$ \dim S_{2k}(\Gamma)=k(k-1)vol(H^2/\Gamma)+\chi (Y_\Gamma)-\begin{cases} 
\frac{1}{3}a_3^{+},&\text{if $k=2 \pmod{3}$;}\\
0,&\text{otherwise}.
\end{cases} $$
In our case $\Gamma=PSL_2(O_K)$. Then $\dim(M_{2k})=\dim S_{2k}+h=\dim S_{2k}+1$, where $h$ is the class number, $\mathbb{Q}(\sqrt{21})$ has class number one. In the table in the end of the book we find that $\dim S_{2}=\chi(Y_\Gamma)=1$ (in the notations of the book the ideal $\mathfrak{a}=O_K$, and $\gamma=(+,+)$ -- the principal genus, corresponding to it). So, $\dim M_{2}=2$, which means that the corresponding algebra of symmetric modular forms of even weight can't be free.

\end{document}